\numberwithin{equation}{section}
\newcommand{\bR}{\mathbb{R}}
\newcommand{\bZ}{\mathbb{Z}}
\newcommand{\R}{\mathbb{R}}
\newcommand{\C}{\mathbb{C}}
\newcommand{\ra}{\rightarrow}
\newcommand{\qand}{\quad \textrm{and} \quad}
\newcommand\subsetsim{\mathrel{%
\ooalign{\raise0.2ex\hbox{$\subset$}\cr\hidewidth\raise-0.8ex\hbox{\scalebox{0.9}{$\sim$}}\hidewidth\cr}}}
\newcommand{\Z}{\mathbb Z}
\theoremstyle{theorem}
\newtheorem{theorem}{Theorem}[section]
\newtheorem{corollary}[theorem]{Corollary}
\newtheorem{proposition}[theorem]{Proposition}
\newtheorem{lemma}[theorem]{Lemma}
\newtheorem{problem}[theorem]{Problem}
\theoremstyle{definition}
\newtheorem{definition}[theorem]{Definition}
\newtheorem{remark}[theorem]{Remark}
\newtheorem{example}[theorem]{Example}
\patchcmd{\subsection}{-.5em}{.5em}{}{}
\patchcmd{\subsubsection}{-.5em}{.5em}{}{}
\begin{document}

\title{Analytic properties of approximate lattices}

\author{Michael Bj\"orklund}
\address{Department of Mathematics, Chalmers, Gothenburg, Sweden}
\email{micbjo@chalmers.se}

\author{Tobias Hartnick}
\address{Mathematisches Institut, Arndtstr. 2, 35392 Giessen, Germany}
\email{tobias.hartnick@math.uni-giessen.de}
\thanks{}

\keywords{Approximate lattices, Property (T), Property (FH), Haagerup Property, a-T-menability}


\date{\today}

\dedicatory{}

\maketitle

\begin{abstract} We introduce a notion of cocycle-induction for strong uniform approximate lattices in locally compact second countable groups and use it to relate (relative) Kazhdan- and Haagerup-type of approximate lattices to the corresponding properties of the ambient locally compact groups. Our approach applies to large classes of uniform approximate lattices (though not all of them) and is flexible enough to cover the $L^p$-versions of Property (FH) and a-(FH)-menability as well as quasified versions thereof a la Burger--Monod and Ozawa.
\end{abstract}

\section{Introduction}

\subsection{Approximate lattices} This article is concerned with analytic properties of approximate lattices in locally compact second countable (lcsc) groups, in particular with properties of Kazhdan and Haagerup type. Following Tao \cite{Tao08}, we say that a subset $\Lambda$ of a group is an \emph{approximate subgroup} if it is symmetric (i.e., $\Lambda^{-1} = \Lambda$) and contains the identity, and if moreover there exists a finite set $F_\Lambda \subset \Lambda^3$ such that
\[
\Lambda^2 \subset \Lambda F_\Lambda.
\]
While the original interest was mostly in families of \emph{finite} approximate subgroups with $F_\Lambda$ of some uniformly bounded size, here we are interested in \emph{infinite} approximate subgroups of lcsc groups. 

If $\Lambda$ is an approximate subgroup of a lcsc group $G$, then we refer to the group $\Lambda^\infty$ generated by $\Lambda$ in $G$ as the \emph{enveloping group} of $\Lambda$ and to the pair $(\Lambda, \Lambda^\infty)$ as an \emph{approximate group}. Following \cite{BH}, we say that an approximate subgroup $\Lambda \subset G$ is a \emph{uniform approximate lattice} in $\Lambda$ if it is a Delone\footnote{See the body of the text for detailed definitions.} subset of $G$. This terminology is motivated by the observation that if $\Lambda = \Lambda^\infty$ is actually a subgroup of $G$, then it is a uniform approximate lattice if and only if it is a uniform lattice.

\begin{remark} In \cite{BH} we also discussed several tentative definitions of ``non-uniform approximate lattices''. In the present article we focus exclusively on uniform approximate lattices to avoid certain integrability issues.
\end{remark}

We observed in \cite{BH}, that if $\Lambda$ is a uniform approximate lattice in $G$, then many properties of the group $G$ are reflected by properties of the approximate group $(\Lambda, \Lambda^\infty)$. For example, $G$ is compactly generated if and only $\Lambda^\infty$ is finitely generated. In this case, the canonical quasi-isometry class of $G$ is represented by the restriction of any word metric on $\Lambda^\infty$ to $\Lambda$, and $G$ is amenable if and only if $\Lambda$ is metrically amenable with respect to any such word metric.

Here we are interested in the question, in how far analytic properties of a lcsc group $G$ are reflected by analytic properties of a uniform approximate lattice $\Lambda$ in $G$ and the associated approximate group $(\Lambda, \Lambda^\infty)$. We will focus our investigation on variants of two important analytic properties of lcsc groups, namely Kazhdan's Property (T) (also known as Property (FH), see \cite{Kaz67, BHVbook}) and the Haagerup property (also known as Gromov's a-T-menability, see \cite{Ha, Gr, CCJVbook}).

\subsection{Properties of Kazhdan- and Haagerup-type}

Recall that a lcsc group $G$ has the Haagerup Property if there exists a metrically proper affine isometric action on a Hilbert space. As for Property (T), there are many equivalent characterizations. The one we are going to generalize here is known as Property (FH), which demands that every affine isometric action of $G$ on a Hilbert space has bounded orbits.
Both the Haagerup Property and Property (FH) have been generalized in two different directions.

Firstly, if instead of Hilbert spaces we consider $L^p$-spaces for some fixed $p \in (1, \infty)$, then we obtain the notion of a-F$L^p$-menability \cite{CTV}, respectively the notion of Property (F${L^p}$) as introduced and studied by Bader, Furman, Gelander and Monod \cite{BFGM}. (We warn the reader, that while Property (FH) is equivalent to Property (T), Property (F$L^p$) as just defined is strictly stronger than Property (T$_{L^p}$) from \cite{BFGM}.)

Secondly, if one replaces affine actions and cocycles by the weaker notions of quasi-cocycles, respectively weak quasi-cocycles, then one obtains strengthenings of Property (FH), which were introduced respectively by Burger--Monod \cite{BM1, Mobook} and Ozawa \cite{Oz11} under the names Property (TT) and Property (TTT).  Since these properties actually generalize Property (FH) and to avoid a conflict of terminology in the $L^p$-case, we follow the terminology introduced by Mimura \cite{Mi11} and refer to them as Property (FFH) and (FFFH) respectively. Dually, one obtains weakening of the Haagerup Property, which we refer to respectively as a-(FFH)-menability and a-(FFFH)-menability (which is more consistent with our other terminology than the term ``Ha-Ha-Haagerup property'' suggested by Nicolas Monod). Note that one can combine both generalizations in the obvious way. 
\begin{definition}\label{TypeProperties} For the purposes of this introduction we refer to the properties (F${L^p}$), (FF${L^p}$) and (FFF${L^p}$) for $p \in (1, \infty)$ as \emph{Kazhdan type properties}, and to the a-F$L^p$-amenability, a-FF$L^p$-menability and the a-FFF$L^p$-menability for $p \in (1, \infty)$ as \emph{Haagerup type properties}. (In the body of the text we will actually consider Kazhdan and Haagerup type properties with respect to more general classes of uniformly convex reflexive Banach spaces than just $L^p$-spaces; all the results presented below remain true in this more general setting.)
\end{definition}

Cornulier in his thesis \cite {Co2, Co1} introduced the notion of relative Property (T) and relative Haagerup-Property of a group with respect to a \emph{subset}. (Relative Property (T) with respect to \emph{subgroups} was defined much earlier by Margulis \cite{Mar82} and implicitly already appears in Kazhdan's original paper \cite{Kaz67}.) Namely, a lcsc group $G$ has Property (T) relative to a subset $A$ if every affine isometric action of $G$ on a Hilbert space has a bounded $A$ orbit, and it has the Haagerup property relative to a subset $A$ if there exists an affine isometric action of $G$ on a Hilbert space which is  $A$-proper. If $\mathcal P$ is any Kazhdan-type or Haagerup-type property, then relative versions can be defined similarly.
\begin{definition}\label{ApproxProperties} Let $(\Lambda, \Lambda^\infty)$ be an approximate group and let $\mathcal P$ be a Kazhdan-type of Haagerup-type property. We say that $(\Lambda, \Lambda^\infty)$ has $\mathcal P$ if $\Lambda^\infty$ has $\mathcal P$ relative to $\Lambda$.
\end{definition}
\begin{remark} Note that if the group $\Lambda^\infty$ enjoys a Kazhdan-type or Haagerup-type property $\mathcal P$, then so does the approximate group $(\Lambda, \Lambda^\infty)$. However, as we will see in Example \ref{ExamplesIntro} below, in general the converse is not true.
\end{remark}
In terms of Definition \ref{TypeProperties} and Definition \ref{ApproxProperties}, the main problem considered in the present article can be formulated as follows.
\begin{problem} Let $G$ be a lcsc group and $\Lambda \subset G$ a uniform approximate lattice with enveloping group $\Lambda^\infty$. How are the Kazhdan-type and Haagerup-type properties of $G$ related to the Kazhdan-type and Haagerup-type properties of $(\Lambda, \Lambda^\infty)$?
\end{problem}

\subsection{Main results}
Recall that if $\Gamma$ is a uniform lattice in a lcsc group $G$ and $\mathcal P$ is a property of Kazhdan-type or Haagerup-type, then $G$ has $\mathcal P$ if and only if $\Gamma$ has $\mathcal P$. There are a number of problems which prevent us from generalizing the results to arbitrary uniform approximate lattices.

Firstly, note that if $\Gamma$ is a uniform lattice in $G$, then the homogeneous space $G/\Gamma$ always admits a unique $G$-invariant probability measure. If $\Lambda$ is merely a uniform approximate lattice in $G$, then it is currently not known in full generality whether the natural substitute for the homogeneous space $G/\Gamma$, the so-called hull $X_\Lambda$ of $\Lambda$, admits a $G$-invariant probability measure. Let us call $\Lambda$ a \emph{strong uniform approximate lattice} if such a measure exists. The following theorem will be established in Subsection \ref{SecHaagerup}:
\begin{theorem}\label{ThmHaagerupIntro} Let $\mathcal H$ be a Haagerup-type property, $G$ be a lcsc group and $\Sigma \subset G$ be a uniform approximate lattice which is contained in a strong uniform approximate lattice $\Lambda \subset G$. Then $G$ has $\mathcal H$ if and only if $(\Sigma, \Sigma^\infty)$ has $\mathcal H$.
\end{theorem}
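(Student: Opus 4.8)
The plan is to establish the two implications separately, with the bulk of the work being a cocycle-induction construction that converts an affine isometric action of $\Sigma^\infty$ on an $L^p$-space into one of $G$, and conversely a straightforward restriction-and-comparison argument in the other direction. Throughout, $\mathcal H$ is one of the Haagerup-type properties of Definition \ref{TypeProperties}, and the key new input relative to the classical lattice case is the existence of a $G$-invariant probability measure on the hull $X_\Lambda$ of the strong uniform approximate lattice $\Lambda \supset \Sigma$, which plays the role of the measure on $G/\Gamma$.

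\emph{Step 1: from $G$ to $(\Sigma, \Sigma^\infty)$ (the easy direction).} Suppose $G$ has $\mathcal H$, so there is an affine isometric action of $G$ on an $L^p$-space whose cocycle is $G$-proper (and in the (FFH)/(FFFH) case one works with (weak) quasi-cocycles). I would restrict this action along the inclusion $\Sigma^\infty \hookrightarrow G$. Since $\Sigma$ is a Delone subset of $G$, it is uniformly discrete and relatively dense, so $\Sigma$ is quasi-isometrically embedded: the word metric on $\Sigma^\infty$ restricted to $\Sigma$ is comparable to the restriction of a left-invariant metric on $G$. Hence $G$-properness of the cocycle restricted to $\Sigma \subset \Sigma^\infty$ gives exactly the relative Haagerup-type property of $(\Sigma, \Sigma^\infty)$ in the sense of Definition \ref{ApproxProperties}. (For the quasified versions one also uses that the defect of a quasi-cocycle is inherited by the restriction.) This direction uses only that $\Sigma$ is a uniform approximate lattice and needs neither $\Lambda$ nor strongness.

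\emph{Step 2: from $(\Sigma, \Sigma^\infty)$ to $G$ via induction through $\Lambda$.} This is the heart of the matter and the main obstacle. Given a $\Sigma$-proper affine isometric action of $\Sigma^\infty$ on $L^p(Y,\mu)$, I want to induce it up to a $G$-proper affine isometric action of $G$. The natural route is: first enlarge from $\Sigma$ to $\Lambda$, using $\Sigma \subset \Lambda$ and the fact that $\Sigma$ is relatively dense in $\Lambda$ (so $\Sigma$-properness upgrades to $\Lambda$-properness of a suitably modified cocycle on $\Lambda^\infty$, or on the groupoid generated by $\Lambda$); then induce from the ``approximate subgroup'' $\Lambda$ of $G$ to $G$ itself using the strong hull $X_\Lambda$. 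Concretely, one forms a space of $L^p$-sections over $X_\Lambda$ — a bundle whose fibre over a point of the hull is built from the given $\Sigma^\infty$- (or $\Lambda$-)representation via the canonical cross-section / tautological point pattern on the hull — and lets $G$ act by translation combined with the cocycle. Integrating the fibrewise norm against the $G$-invariant probability measure on $X_\Lambda$ produces a genuine isometric $G$-action on an $L^p$-space; the affine part is assembled from the original cocycle evaluated along return times of the $G$-orbit to the pattern. Properness of the induced cocycle for $G$ follows because a long excursion in $G$ forces a long excursion in the word metric on $\Lambda$ (relative density of $\Lambda$ in $G$), which by the input forces the fibrewise affine part to be large on a set of positive measure, hence the integrated norm is large.

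\emph{Main difficulty and how I would handle it.} The delicate point is that $\Lambda$ is not a group, so there is no honest quotient $G/\Lambda$ and no honest cocycle on $G$ with values in $\Lambda$; one must replace these by the hull $X_\Lambda$ together with its tautological groupoid, and cocycle-induction has to be carried out in that groupoid setting — this is presumably exactly the ``cocycle-induction for strong uniform approximate lattices'' advertised in the abstract. I expect the formal setup (defining the induced module, checking it is a well-defined isometric $L^p$-representation, checking the induced map is a genuine cocycle resp.\ quasi-cocycle with controlled defect) to occupy most of the section; the measure-theoretic input — $G$-invariance of the probability measure on $X_\Lambda$, which is where ``strong'' is used — is what makes the fibrewise integration produce an honest (rather than merely ``approximate'') action of $G$. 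Two secondary technical points to watch: (i) the passage $\Sigma \rightsquigarrow \Lambda$ must not destroy properness, which is fine because $\Sigma$ is relatively dense in $\Lambda$ so the two relative properties are equivalent; and (ii) in the $L^p$-setting one cannot use orthogonal complements, so instead of splitting one argues directly with the $L^p$-cocycle and uses that $L^p$-Haagerup-type properties are stable under the bundle construction above, exactly as in the group case treated in \cite{CTV}. For the (FFH)/(FFFH) variants one additionally checks that induction sends (weak) quasi-cocycles to (weak) quasi-cocycles with defect bounded in terms of the original defect, which follows fibrewise.
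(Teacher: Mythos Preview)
Your overall architecture matches the paper: restriction for the direction $G \Rightarrow (\Sigma,\Sigma^\infty)$, and cocycle induction over the hull $X_\Lambda$ equipped with a $G$-invariant probability measure for the converse. The induction machinery you sketch---a bounded Borel section $s$, the resulting $\Lambda^2$-valued cocycle $\beta_s(g,P)=s(gP)^{-1}gs(P)$, the induced module $L^p(X_\Lambda,\nu;E)$, the verification that (weak quasi-)cocycles induce (weak quasi-)cocycles with defect bounded by the original defect---is precisely what the paper builds in Section~\ref{affineisom}, and your properness argument (``long in $G$ forces long in $\Lambda$, hence large fibrewise, hence large after integration'') is the content of Proposition~\ref{HaagerupInduction} and Lemma~\ref{HaagerupTriangleInequality}.

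There is, however, a genuine gap in your Step~2, namely the passage ``first enlarge from $\Sigma$ to $\Lambda$''. The cocycle $b$ you start with lives on $\Sigma^\infty$, whereas the hull cocycle $\beta_s$ takes values in $\Lambda^2\subset\Lambda^\infty$; in general $\Lambda^2\not\subset\Sigma^\infty$, and there is no evident way to extend $b$ to a (wq-)cocycle on $\Lambda^\infty$: the index $[\Lambda^\infty:\Sigma^\infty]$ may be infinite, so induction from the subgroup $\Sigma^\infty$ to $\Lambda^\infty$ is not available, and your parenthetical ``or on the groupoid generated by $\Lambda$'' does not supply a construction. Your remark that ``$\Sigma$ is relatively dense in $\Lambda$ so the two relative properties are equivalent'' conflates left-syndeticity of $\Sigma$ in $\Lambda$ (which is true) with any comparison of $\Sigma^\infty$ and $\Lambda^\infty$ (which need not hold). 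The paper organises the argument so as never to go $\Sigma\rightsquigarrow\Lambda$: it first establishes the full equivalence $G\Leftrightarrow(\Lambda,\Lambda^\infty)$ at the level of the \emph{strong} approximate lattice (Corollary~\ref{CorHaagerupStrong}), carrying out the induction with a cocycle that already lives on $\Lambda^\infty$; the comparison with $\Sigma$ is then always \emph{downward}, restricting an action of $\Lambda^\infty$ to $\Sigma^\infty$ and using left-syndeticity of $\Sigma$ in $\Lambda$ to preserve properness (this is the Proposition immediately following Corollary~\ref{CorHaagerupStrong}). You should reorganise Step~2 along these lines rather than attempting to enlarge the cocycle.
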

We emphasize that we do not need to assume that $\Sigma$ itself is strong, but only that it is contained in a strong uniform approximate lattice. As far as examples are concerned, let us briefly recall the main constructions of approximate lattices. For this let $G$, $H$ be lcsc groups and denote by $\pi_G: G\times H \to G$ the projection onto the first factor. Moreover, let $\Gamma < G \times H$ be a uniform lattice which projects injectively into $G$ and densely into $H$, and let $W$ be a compact identity neighbourhood in $H$ with dense interior. Then the set
\[
\Lambda := \Lambda(G, H, \Gamma, W) := \pi_G(\Gamma \cap (G \times W))
\]
is called a \emph{symmetric model set}. A symmetric model set is called \emph{regular} provided $W$ is Jordan-measurable with dense interior, aperiodic (i.e., ${\rm Stab}_H(W) = \{e\}$) and satisfies $\partial W \cap \pi_H(\Gamma) = \emptyset$. A subset $\Delta$ of a lcsc group $G$ is called a \emph{Meyer set} if there exists a symmetric model set $\Lambda \subset G$ and a finite subset $F \subset G$ such that $\Delta \subset \Lambda \subset \Delta F$. In fact, $\Lambda$ can be chosen to be regular.
\begin{proposition}[\cite{BH}]\label{MeyerApproxGrp}
 Every regular symmetric model set is a strong uniform approximate lattice, and every symmetric Meyer set is a uniform approximate lattice (not necessarily strong).
\end{proposition}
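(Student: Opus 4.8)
The plan is to treat the three assertions in turn: the two statements about model sets directly from the cut-and-project description, and the Meyer-set statement by a squeezing argument built on them.

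First I would verify that a symmetric model set $\Lambda = \Lambda(G,H,\Gamma,W)$ is a uniform approximate lattice. Symmetry and $e \in \Lambda$ are immediate from $W = W^{-1}$, $e \in W$ and $(e,e) \in \Gamma$. For the approximate-subgroup property, the key observation is $\Lambda^2 \subseteq \pi_G(\Gamma \cap (G \times W^2))$; since $W^\circ \neq \emptyset$ and $\pi_H(\Gamma)$ is dense, the translates $\{\pi_H(\gamma)W^\circ : \gamma \in \Gamma\}$ cover $H$, so by compactness of $W^2$ finitely many $\gamma_1,\dots,\gamma_n$ satisfy $W^2 \subseteq \bigcup_i \pi_H(\gamma_i)W$, which gives $\Lambda^2 \subseteq \bigcup_i \pi_G(\gamma_i)\Lambda$, hence $\Lambda^2 \subseteq \Lambda F$ for a finite set $F$ using $\Lambda = \Lambda^{-1}$; one then shrinks $F$ to $\{x \in F : \Lambda x \cap \Lambda^2 \neq \emptyset\} \subseteq \Lambda^{-1}\Lambda^2 = \Lambda^3$. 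Uniform discreteness I would extract from $\Lambda \subseteq \pi_G(\Gamma \cap (G \times W^2))$ together with a sequential compactness argument in $G \times H$: accumulation of this projected set would, using compactness of $W^4$ and discreteness of $\Gamma$, force a nontrivial element of $\ker(\pi_G|_\Gamma)$, contradicting injectivity of $\pi_G$ on $\Gamma$. Relative density I would obtain by fixing once and for all a point $h_0 \in W^\circ$ and a compact transversal $(C \times D)\Gamma = G \times H$; covering the compact set $D^{-1}h_0$ by finitely many $\pi_H(\delta_j)W^\circ$ and, for each $g \in G$, correcting the $\gamma$ in $(g, h_0) = (c,d)\gamma$ by the appropriate $\delta_j$ into $\Gamma \cap (G \times W)$, one lands in $g \in C\{\pi_G(\delta_j)\}_j\Lambda$, so $G = K\Lambda$ with the compact set $K := C\{\pi_G(\delta_j)\}_j$; the symmetric statement follows by inverting.

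The crux is showing that a \emph{regular} model set $\Lambda$ is strong, which I would do through the cut-and-project dynamical system. The hull $X_\Lambda = \overline{G \cdot \Lambda}$ is compact (Delone sets with fixed parameters form a compact space in the Chabauty--Fell topology). I would then construct the factor map $p : X_\Lambda \to (G \times H)/\Gamma$ onto the underlying homogeneous space, morally sending $P = \lim_n g_n\Lambda$ to $\lim_n (g_n, e)\Gamma$; topological regularity of $W$ (that is, $\overline{W^\circ} = W$), aperiodicity, and the condition $\partial W \cap \pi_H(\Gamma) = \emptyset$ are exactly what is needed to make this assignment well-defined, continuous and $G$-equivariant, and surjectivity is then automatic from minimality of the $G$-action on $(G\times H)/\Gamma$ (which holds because $\pi_H(\Gamma)$ is dense). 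Next I would show that the $G$-invariant Borel set $\widetilde X_0 := \{\omega \in (G\times H)/\Gamma : \# p^{-1}(\omega) = 1\}$ is conull for the unique $(G \times H)$-invariant probability measure $m$ on $(G \times H)/\Gamma$: Jordan-measurability of $W$ forces its boundary to be Haar-null in $H$, a Fubini argument for the measure-preserving $H$-action on $(G\times H)/\Gamma$ then shows that $m$-almost every coset has "window boundary disjoint from $\pi_H(\Gamma)$", and aperiodicity of $W$ pins the corresponding fibre of $p$ to a single point. Finally $s := \bigl(p|_{p^{-1}(\widetilde X_0)}\bigr)^{-1} : \widetilde X_0 \to X_\Lambda$ is a $G$-equivariant Borel section, and $\mu := s_* m$ is a $G$-invariant Borel probability measure on $X_\Lambda$; hence $\Lambda$ is strong.

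For the Meyer-set statement, given a symmetric $\Delta$ with $e \in \Delta$ and $\Delta \subseteq \Lambda \subseteq \Delta F$ for a regular symmetric model set $\Lambda$ and a finite set $F$, uniform discreteness of $\Delta$ is inherited from $\Lambda \supseteq \Delta$ and relative density from $\Lambda \subseteq \Delta F$ with $F$ finite, so $\Delta$ is Delone; and $\Delta^2 \subseteq \Lambda^2 \subseteq \Lambda F_\Lambda \subseteq \Delta(FF_\Lambda)$ together with the same shrinking trick ($\{x \in FF_\Lambda : \Delta x \cap \Delta^2 \neq \emptyset\} \subseteq \Delta^{-1}\Delta^2 = \Delta^3$) gives the approximate-subgroup property. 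Strongness is not asserted here, since the section argument does not transfer to $X_\Delta$. I expect the main obstacle to be Step~2 for regular model sets, and within it the verification that the one-point-fibre locus $\widetilde X_0$ is conull: this is the genuinely measure-theoretic input, and the place where the full strength of the regularity hypotheses (Jordan-measurability, aperiodicity and $\partial W \cap \pi_H(\Gamma) = \emptyset$) on the window gets consumed.
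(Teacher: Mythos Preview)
The paper does not actually prove this proposition: it is imported from \cite{BH} (and, for the ``strong'' part, implicitly from \cite{BHP1}, restated later in the paper as Theorem~\ref{ModelSetBackground}). So there is no in-paper argument to compare against directly.

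That said, your sketch is correct and is essentially the route taken in those references. The approximate-subgroup and Delone properties of a symmetric model set are obtained exactly as you describe (the covering argument for $\Lambda^2\subset \Lambda F$, discreteness from injectivity of $\pi_G|_\Gamma$, relative density from cocompactness of $\Gamma$); this is \cite[Prop.~2.13]{BH}. Your Step~2 reproduces the mechanism behind \cite[Thm.~1.1]{BHP1}: one builds the $G$-equivariant factor map $X_\Lambda\to (G\times H)/\Gamma$ and uses the regularity hypotheses on $W$ to show that this map is an isomorphism of measurable $G$-spaces onto a conull set, so that the Haar measure on $(G\times H)/\Gamma$ transports to a $G$-invariant probability on $X_\Lambda$. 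Your identification of the conullity of the single-fibre locus as the place where Jordan measurability, aperiodicity and $\partial W\cap\Gamma_H=\emptyset$ are all genuinely consumed is exactly right, and matches the structure of the argument in \cite{BHP1}. The Meyer-set squeezing argument is routine and agrees with \cite{BH}. One small cosmetic point: for surjectivity of the factor map you invoke minimality of the $G$-action on $(G\times H)/\Gamma$; this is indeed equivalent to density of $\pi_H(\Gamma)$ in $H$, which is part of the cut-and-project hypotheses, so the step is fine but it may be worth saying so explicitly.
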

Combining Proposition \ref{MeyerApproxGrp} and Theorem \ref{ThmHaagerupIntro} we obtain:
\begin{corollary} Let $\mathcal H$ be a Haagerup-type property, $G$ be a lcsc group and $\Lambda \subset G$ be a symmetric Meyer set containing the identity. Then $G$ has $\mathcal H$ if and only if $(\Lambda, \Lambda^\infty)$ has $\mathcal H$.
\end{corollary}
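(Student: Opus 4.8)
The plan is to reduce the statement directly to Theorem~\ref{ThmHaagerupIntro} by exploiting the fact that every Meyer set is sandwiched between a regular model set (up to a finite multiplicative error). First I would unwind the definitions: since $\Lambda \subset G$ is a symmetric Meyer set, by definition there is a symmetric model set and a finite set $F \subset G$ sandwiching $\Lambda$, and — by the observation recorded right after the definition of Meyer set — this model set may be taken to be \emph{regular}. Call it $\Lambda'$, so that
\[
\Lambda \subset \Lambda' \subset \Lambda F .
\]
By Proposition~\ref{MeyerApproxGrp}, the regular symmetric model set $\Lambda'$ is a \emph{strong} uniform approximate lattice in $G$, and the symmetric Meyer set $\Lambda$ is a uniform approximate lattice in $G$. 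The hypothesis that $\Lambda$ contains the identity ensures that $\Lambda$ is genuinely an approximate subgroup, so that the enveloping group $\Lambda^\infty$ and the approximate group $(\Lambda,\Lambda^\infty)$ are well defined and Definition~\ref{ApproxProperties} applies to it.

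Next I would simply observe that the hypotheses of Theorem~\ref{ThmHaagerupIntro} are now met verbatim, taking $\Sigma := \Lambda$ to be the uniform approximate lattice and $\Lambda'$ to be the strong uniform approximate lattice containing it. The theorem then yields, for the given Haagerup-type property $\mathcal H$, that $G$ has $\mathcal H$ if and only if $(\Lambda,\Lambda^\infty)$ has $\mathcal H$, which is exactly the assertion of the corollary.

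Since the argument is a direct combination of two earlier results, there is no genuine obstacle at the level of the corollary; the only point that deserves a moment's attention is that the "strong" requirement is placed on the \emph{ambient} model set $\Lambda'$ rather than on the Meyer set $\Lambda$ itself — which is precisely the extra flexibility deliberately built into the statement of Theorem~\ref{ThmHaagerupIntro}. All the substantive content — the cocycle-induction machinery comparing relative Haagerup-type properties of $\Lambda^\infty$ with those of $G$, and the verification that regular model sets are strong — is carried by Theorem~\ref{ThmHaagerupIntro} and Proposition~\ref{MeyerApproxGrp}, respectively.
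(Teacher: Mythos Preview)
Your proposal is correct and matches the paper's approach exactly: the paper states the corollary as an immediate combination of Proposition~\ref{MeyerApproxGrp} and Theorem~\ref{ThmHaagerupIntro}, and you have spelled out precisely this combination, including the key observation that the ambient model set can be taken regular (hence strong), so that Theorem~\ref{ThmHaagerupIntro} applies with $\Sigma = \Lambda$.
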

If $G$ is abelian, then every uniform approximate lattice in $G$ is a symmetric Meyer set \cite{Meyerbook, Moody}. As for general $G$, it is currently not known whether there exist any uniform approximate lattices which are not symmetric Meyer sets. In particular, the corollary covers all currently known examples of uniform approximate lattices.

The situation with Kazhdan-type properties is more complicated than in the Haagerup case. To formulate our result, we say that a model set $\Lambda$ is of \emph{almost connected type}, respectively \emph{connected Lie type}, if $\Lambda = \Lambda(G, H, \Gamma, W)$ for some $H$ which is almost connected (i.e., connected-by-compact), respectively a connected Lie group. The following theorem will be established in Subsection \ref{SecTMeyer}:
\begin{theorem}\label{KazhdanType} Let $\mathcal T$ be a Kazhdan-type property, $G$ be a lcsc group and $\Lambda \subset G$ be a uniform approximate lattice. Assume that one of the following holds:
\begin{enumerate}
\item $\Lambda$ is a model set.
\item $\Lambda$ is a Meyer set which is contained in a model set of almost connected type.
\end{enumerate}
Then $G$ has $\mathcal T$ if and only if $(\Lambda, \Lambda^\infty)$ has $\mathcal T$.
\end{theorem}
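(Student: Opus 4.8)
The plan is to establish the two implications separately: the implication ``$(\Lambda,\Lambda^\infty)$ has $\mathcal T$ $\Rightarrow$ $G$ has $\mathcal T$'' is soft and holds for every uniform approximate lattice, while the converse is where cocycle-induction and the hypotheses (1), (2) are needed. For the easy implication, let $\pi$ be an isometric representation of $G$ on an $L^p$-space and $b$ a cocycle (respectively quasi-cocycle, weak quasi-cocycle) for $\pi$. Restricting $\pi$ and $b$ to the enveloping group $\Lambda^\infty$ and using that $\Lambda^\infty$ has $\mathcal T$ relative to $\Lambda$ yields $\sup_{\lambda\in\Lambda}\|b(\lambda)\|<\infty$. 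Since $\Lambda$ is Delone there is a compact $K$ with $G=\Lambda K$, and writing $g=\lambda k$ and applying the (quasi-)cocycle relation together with the local boundedness of $b$ on $K$ gives a bound on $\|b(g)\|$ uniform in $g\in G$; hence $b$ has bounded orbits and $G$ has $\mathcal T$. This step uses neither strongness nor any model-set structure.

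For the converse the core case is that of a \emph{regular} symmetric model set $\Lambda$, whose hull $X_\Lambda$ carries a $G$-invariant probability measure $m$ by Proposition~\ref{MeyerApproxGrp}. Given an isometric $L^p$-representation $\pi$ of $\Lambda^\infty$ and a (weak quasi-)cocycle $b$, I would use the tautological return cocycle $G\times X_\Lambda\to\Lambda^\infty$ of the transversal groupoid of $X_\Lambda$ to build an induced isometric $L^p$-representation $\widetilde\pi$ of $G$ on a space of $L^p$-sections over $(X_\Lambda,m)$ with fibre $\mathcal{B}$, together with an induced (weak quasi-)cocycle $\widetilde b$ for $\widetilde\pi$ whose defect is controlled by that of $b$ through the cocycle identity for the return cocycle. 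Since $G$ has $\mathcal T$, $\widetilde b$ has bounded orbits (with bounded defect). The heart of the matter is a Shapiro-type statement in reverse: using regularity of the window $W$ (Jordan-measurability, aperiodicity, and $\partial W\cap\pi_H(\Gamma)=\emptyset$) the return cocycle is locally constant along the transversal, so the restriction of $\widetilde b(\lambda)$ to a neighbourhood of the base point of $X_\Lambda$ reproduces $b(\lambda)$ for $\lambda\in\Lambda$; combining this with the $G$-invariance of $m$ and, in the $L^p$-cocycle case, with uniform convexity of $L^p$ to pass from bounded averages over $X_\Lambda$ to a genuine bound, one obtains $\sup_{\lambda\in\Lambda}\|b(\lambda)\|<\infty$ up to the defect, so $(\Lambda,\Lambda^\infty)$ has $\mathcal T$.

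To pass from the regular case to (1) and (2) I would use a second, purely group-theoretic, induction. If $\Lambda'\supseteq\Lambda$ is a regular symmetric model set, then $\Lambda^\infty$ is a subgroup of the countable group $(\Lambda')^\infty$; ordinary $\ell^p$-induction from $\Lambda^\infty$ to $(\Lambda')^\infty$ sends isometric $L^p$-representations and (weak quasi-)cocycles of $\Lambda^\infty$ to such objects of $(\Lambda')^\infty$ with controlled defect, and evaluating the induced (weak quasi-)cocycle at the trivial coset shows that if $(\Lambda',(\Lambda')^\infty)$ has $\mathcal T$ then so does $(\Lambda,\Lambda^\infty)$. In case (1), $\Lambda$ is a model set, in particular a symmetric Meyer set, hence contained in a regular symmetric model set $\Lambda'$ by Proposition~\ref{MeyerApproxGrp}; applying the regular case to $\Lambda'$ (which has $\mathcal T$ because $G$ does) and transferring back settles this case, with no constraint on the internal group $H$. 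In case (2), $\Lambda$ is a symmetric Meyer set contained in a model set of almost connected type, and here one must enlarge the ambient window to a regular one \emph{while staying within the almost connected class}; the point of the almost-connectedness hypothesis is precisely that it guarantees such a regularisation — aperiodic, Jordan-measurable windows whose boundary misses the countable set $\pi_H(\Gamma)$ exist for almost connected $H$ — so that the ambient hull retains the homogeneous, ``tame'' structure used in the Shapiro-type step, which one does not control for an arbitrary Meyer set.

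I expect the main obstacle to be exactly this Shapiro-type reflection in the regular case: one must show that boundedness, respectively bounded defect, of the induced object on $G$ forces the corresponding bound for the original (quasi-)cocycle on $\Lambda$, uniformly and simultaneously for (F$L^p$), (FF$L^p$) and (FFF$L^p$). For ordinary Hilbert cocycles this is a soft averaging/barycentre argument, but in the $L^p$-setting uniform convexity must replace the Hilbert-space projection, and for weak quasi-cocycles \`a la Ozawa the defect is only controlled along the coarse geometry of $\Lambda^\infty$, so the regularity of the hull --- equivalently the fine structure of the window, and in case (2) the almost-connectedness of $H$ --- has to be exploited quantitatively rather than merely qualitatively. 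A secondary difficulty is checking that both inductions (through $X_\Lambda$ and through the subgroup $\Lambda^\infty\le(\Lambda')^\infty$) genuinely preserve membership in the three classes of cochains with uniform control of defects, which is routine for cocycles but requires care for the quasified versions.
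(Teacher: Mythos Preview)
Your easy direction is fine and matches the paper. The converse, however, has two genuine gaps.

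For the model-set case your plan is to induce the (weak quasi-)cocycle to $G$ via the hull $X_\Lambda$ and then run a ``Shapiro-type reflection'': evaluate the induced object near the base point and read off $b(\lambda)$. The paper explicitly explains (see the remark following Problem~\ref{ProblemT}) why this does \emph{not} work for approximate lattices: although the sets $X_\Lambda(\lambda,\lambda)$ are open and non-empty, one cannot show that they have \emph{uniformly} positive $\nu$-measure, so the pointwise evaluation step fails. The paper's actual mechanism is quite different and uses the model-set structure in an essential way: one induces not to $G$ but to the full product $G\times H$ via the homogeneous space $Y=(G\times H)/\Gamma$, and then for $(\gamma,\gamma^*)\in\Gamma_W$ one writes $(\gamma,\gamma^*)=\beta((g,e)^{-1},x)^{-1}\cdot\beta((e,h)^{-1},y)$ for suitable $x,y\in Y$. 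The second factor is bounded because $\gamma^*\in W$ forces $h$ to lie in a fixed compact set of $H$; the first factor is controlled by applying Ozawa's theorem (Theorem~\ref{ThmOzawa}) to the subadditive function $\ell(x,g)=\|b(\beta((g,e)^{-1},x)^{-1})\|+D(b)$, which produces an $L^1$-function $\phi$ on $Y$ with $\ell(x,g)\le\phi(x)+\phi(g^{-1}x)$, followed by a Fubini/\v{C}eby\v{s}ev argument (Lemma~\ref{LemmaFubini}) to find good $x,y$ with $\phi$ bounded. Neither the extension of the induced action to $G\times H$ nor Ozawa's $L^1$-domination theorem appears in your outline, and regularity of the window plays no role in this step --- the argument works for any symmetric uniform model set.

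For case~(2) you misidentify the role of almost connectedness. Window regularisation is always possible; that is not the issue. The point is that your proposed $\ell^p$-induction from $\Lambda^\infty$ to $(\Lambda')^\infty$ only makes sense (and only allows you to extract the trivial-coset term with a uniform lower bound on its mass) when $[(\Lambda')^\infty:\Lambda^\infty]<\infty$, for otherwise the coset space carries no invariant probability measure. The paper proves precisely this finite-index statement (Proposition~\ref{MeyerFiniteIndex}): if $\Lambda'$ is a model set of almost connected type and $\Lambda\subset\Lambda'$ is a symmetric Meyer set, then $\Lambda^\infty$ has finite index in $(\Lambda')^\infty$. The proof uses a Baire-category argument together with the fact that an open subgroup of an almost connected group has finite index; this is where almost connectedness enters, and without it the reduction step collapses.
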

Concerning Case (2) of Theorem \ref{KazhdanType}, it is natural to ask how restrictive the assumption on a Meyer set to be contained in a model set of almost connected type actually is. While it is not true that every Meyer set is contained in a model set of almost connected type, we establish (a more precise version of) the following result in the appendix:
\begin{theorem} Let $\Lambda$ be an arbitrary Meyer set. Then $\Lambda$ is contained in a finite union of left-translates of a model set of almost connected type. In fact, it is even contained in a finite union of left-translates of a model set of connected Lie type.
\end{theorem}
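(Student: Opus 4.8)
The plan is to show that \emph{any} model set over a lcsc group $H$ can be enlarged, at the cost of passing to finitely many left‑translates, first to a model set of almost connected type and then to one of connected Lie type. Since by definition a Meyer set $\Lambda$ is contained in a (regular, symmetric) model set $\Lambda_0 = \Lambda(G,H,\Gamma,W)$, it suffices to treat model sets. The argument rests on two reduction lemmas: one allows us to replace $H$ by an open subgroup $H_1 \le H$ (at the price of finitely many translates), and one allows us to replace $H$ by a quotient $H/K$ by a compact normal subgroup $K$ (with no extra translates); combining them with the structure theory of lcsc groups then finishes the proof.

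\emph{Open subgroups.} Suppose $H_1 \le H$ is open and set $\Gamma_1 := \Gamma \cap (G \times H_1)$. I claim $\Gamma_1$ is a uniform lattice in $G \times H_1$ still projecting injectively into $G$ and densely into $H_1$, and that $\Lambda(G,H,\Gamma,W)$ lies in finitely many left‑translates of a model set over $H_1$. Discreteness of $\Gamma_1$ is immediate since $G \times H_1$ is open in $G \times H$. The key point is that density of $\pi_H(\Gamma)$ together with openness of the \emph{subgroup} $H_1$ forces $(G\times H_1)\Gamma = G\times H$; hence the tautological map $(G\times H_1)/\Gamma_1 \to (G\times H)/\Gamma$ is a continuous open bijection, so $(G\times H_1)/\Gamma_1$ is homeomorphic to the compact space $(G\times H)/\Gamma$ and $\Gamma_1$ is cocompact. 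Injectivity into $G$ is inherited, and $\pi_{H_1}(\Gamma_1) = \pi_H(\Gamma)\cap H_1$ is dense in $H_1$ because $H_1$ is open. To produce the translates, write the compact window $W$ as a finite disjoint union of pieces $W \cap \pi_H(\delta_i)H_1$ over the (finitely many, open) cosets of $H_1$ meeting $W$, choosing $\delta_i \in \Gamma$ by density. Left multiplication by $\delta_i^{-1}$ carries $\Gamma \cap \big(G \times (W\cap \pi_H(\delta_i)H_1)\big)$ bijectively onto $\Gamma_1 \cap (G\times V_i)$ with $V_i := \pi_H(\delta_i)^{-1}W \cap H_1$ a compact subset of $H_1$ with dense interior. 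Choosing any compact symmetric identity neighbourhood $W_1 \subseteq H_1$ with dense interior containing every $V_i$, we get
\[
\Lambda(G,H,\Gamma,W) = \bigcup_i \pi_G(\delta_i)\,\pi_G\big(\Gamma_1 \cap (G\times V_i)\big) \subseteq \bigcup_i \pi_G(\delta_i)\,\Lambda(G,H_1,\Gamma_1,W_1).
\]

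\emph{Compact normal quotients and assembly.} If $K \trianglelefteq H$ is compact normal with quotient map $q\colon H\to H/K$, then, using that $\Gamma$ projects injectively into $G$ (so $\Gamma$ meets $\{e\}\times K$ trivially), $\Gamma' := (\mathrm{id}\times q)(\Gamma)$ is a discrete subgroup of $G\times(H/K)$ isomorphic to $\Gamma$; as $(G\times H/K)/\Gamma'$ is a continuous image of the compact $(G\times H)/\Gamma$ it is compact, so $\Gamma'$ is a uniform lattice in $G\times(H/K)$, still injective into $G$ and dense in $H/K$, and with $W' := q(W)$ (a compact symmetric identity neighbourhood with dense interior, $q$ being open) one checks directly $\Lambda(G,H,\Gamma,W) \subseteq \Lambda(G,H/K,\Gamma',W')$. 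Now assemble: by van Dantzig's theorem the totally disconnected group $H/H^0$ has a compact open subgroup, whose preimage $H_1$ is an open almost connected subgroup of $H$; the open‑subgroup lemma gives a finite $F_1 \subset G$ with $\Lambda_0 \subseteq F_1\,\Lambda(G,H_1,\Gamma_1,W_1)$, a model set of almost connected type. By the structure theory of almost connected locally compact groups (Gleason--Yamabe/Montgomery--Zippin) $H_1$ has a compact normal subgroup $K$ with $H_1/K$ a Lie group, necessarily almost connected; the quotient lemma gives $\Lambda(G,H_1,\Gamma_1,W_1) \subseteq \Lambda(G,H_1/K,\Gamma_1',W_1')$. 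Finally an almost connected Lie group has finitely many components, so its identity component $(H_1/K)^0$ is an open (finite‑index) subgroup that is a connected Lie group; a second application of the open‑subgroup lemma yields a finite $F_2 \subset G$ with $\Lambda(G,H_1/K,\Gamma_1',W_1') \subseteq F_2\,\Lambda\big(G,(H_1/K)^0,\Gamma_1'',W_1''\big)$. Chaining the inclusions, $\Lambda \subseteq \Lambda_0 \subseteq (F_1F_2)\,\Lambda\big(G,(H_1/K)^0,\Gamma_1'',W_1''\big)$, a finite union of left‑translates of a model set of connected Lie type, which a fortiori proves the almost connected case.

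The main obstacle is the open‑subgroup lemma, specifically the verification that intersecting the lattice $\Gamma$ with the open subgroup $G\times H_1$ again yields a uniform lattice: this is exactly where density of $\pi_H(\Gamma)$ is used (through the identity $(G\times H_1)\Gamma = G\times H$), and without density $\Gamma_1$ could fail to be cocompact. The remainder — splitting $W$ along cosets of $H_1$, translating the pieces into $H_1$ and enlarging them to an honest identity neighbourhood with dense interior, and the routine checks for the compact‑quotient reduction — is elementary bookkeeping. A more precise version would additionally keep track of the cardinality of the translating set and of regularity of the resulting windows, but these do not affect the structure of the argument.
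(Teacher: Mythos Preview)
Your proof is correct and follows essentially the same route as the paper's argument in the appendix: pass to an open almost connected subgroup of $H$ via van Dantzig (covering the window by finitely many coset translates using density of $\pi_H(\Gamma)$), then quotient by a compact normal subgroup via Montgomery--Zippin/Gleason--Yamabe to reach a Lie group. Your presentation is slightly more modular---separating the open-subgroup and compact-quotient reductions into standalone lemmas and applying the open-subgroup step a second time to pass from an almost connected Lie quotient to its identity component---whereas the paper does the reduction in one pass and asserts directly that the Lie quotient can be taken connected; your two-step version is arguably cleaner on this last point.
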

\begin{remark} For Property (T), Part (1) of Theorem \ref{KazhdanType} was established (without using the language of approximate groups) by Chifan and Ioana \cite[Cor.\ 1.3]{CI} using Cornulier's notion of resolutions \cite{Co1}. While their proof is simpler than ours, it uses unitary representation theory, and it is unclear to us whether their approach can be extended to cover Properties (FFH) and (FFFH) and/or $L^p$-Kazhdan-type properties for $p \neq 2$.
\end{remark}
Theorem \ref{ThmHaagerupIntro} and Theorem \ref{KazhdanType} provide a rich supply of examples of approximate groups with various Haagerup-type and Kazhdan-type properties, see Examples \ref{ExTnoT} and \ref{ExSp} below.
\begin{example}\label{ExamplesIntro} 
\begin{enumerate}[(i)]
\item Let $n \geq 2$, $G := {\rm SU}(n,2)$ and $H := {\rm SU}(n+1,1)$, let $\Gamma$ be an irreducible lattice in $G \times H$ and let $\Lambda = \Lambda(G, H, \Gamma, W)$ be a regular model set in $G$ with symmetric  window $W \subset H$ containing the identity. It then follows from the results in \cite{BM1} that $\Lambda$ has the Burger--Monod Property (FF$H$) (a.k.a. Property (TT)), whereas $\Lambda^\infty$ does not even have Property (FH). Moreover, by results from \cite{BFGM}, $\Lambda$ also has Property (F$L^p$) for all $p \in (1, \infty)$, whereas $\Lambda^\infty$ does not have Property (F$L^p$) for any $p \in (1, \infty)$. (For $p = 2$ this follows also from the results in \cite{CI}.)
\item Let $n \geq 2$ and let $\Lambda$ be a symmetric model set in ${\rm Sp}(n,1)$ containing the identity. Then $\Lambda$ has Property (FH), but by results from \cite{Oz11}, respectively \cite{CTV}, it is both a-FFF$H$-menable and a-F$L^p$-menable for $p>4n+2$.
\end{enumerate}
\end{example}

\subsection{Towards applications} Given approximate groups $(\Lambda,\Lambda^\infty)$ and $(\Sigma, \Sigma^\infty)$ we say that a map of pairs $\varphi: (\Lambda, \Lambda^\infty) \to (\Sigma, \Sigma^\infty)$, is called a \emph{morphism} if $\varphi(gh) = \varphi(g)\varphi(h)$ for all $g, h \in \Lambda^\infty$ and a \emph{quasimorphism} if the set $\{\varphi(h)^{-1}\varphi(g)^{-1}\varphi(gh) \mid g, h\in \Lambda^\infty\}$ is finite. Then the following result can be established as in the group case. 
\begin{proposition}\label{Rigidity} Let $p \in (1, \infty)$, $(\Lambda,\Lambda^\infty)$, $(\Sigma, \Sigma^\infty)$ be approximate groups and $\varphi: (\Lambda, \Lambda^\infty) \to (\Sigma, \Sigma^\infty)$. Assume that either of the following holds:
\begin{enumerate} [(i)]
\item If $(\Lambda, \Lambda^\infty)$ has (F$L^p$), $(\Sigma, \Sigma^\infty)$ is a-F$L^p$-menable and $\varphi$ is a morphism.
\item  If $(\Lambda, \Lambda^\infty)$ has (FFF$L^p$), $(\Sigma, \Sigma^\infty)$ is a-FFF$L^p$-menable and $\varphi$ is a quasimorphism.
\end{enumerate}
Then $\varphi(\Lambda)$ is finite.
\end{proposition}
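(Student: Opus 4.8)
The plan is the standard rigidity argument exploiting the incompatibility of a Kazhdan-type and a Haagerup-type property (in the spirit of Burger--Monod, Ozawa and Mimura), carried out at the level of pairs: pull back along $\varphi$ the proper object witnessing the Haagerup-type property of $(\Sigma,\Sigma^\infty)$, invoke the Kazhdan-type property of $(\Lambda,\Lambda^\infty)$ to bound it on $\Lambda$, and then read off finiteness of $\varphi(\Lambda)$ from properness on $\Sigma$. Concretely, consider Case (i): since $\varphi\colon\Lambda^\infty\to\Sigma^\infty$ is a homomorphism and $(\Sigma,\Sigma^\infty)$ is a-F$L^p$-menable, there exist an $L^p$-space $E$, an affine isometric action $\alpha$ of $\Sigma^\infty$ on $E$, and a point $\xi_0\in E$ such that $\sigma\mapsto\|\alpha(\sigma)\xi_0-\xi_0\|$ has finite sublevel sets on $\Sigma$; then $\alpha\circ\varphi$ is a genuine affine isometric action of $\Lambda^\infty$ on $E$. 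By Property (F$L^p$) of $(\Lambda,\Lambda^\infty)$ --- that is, of $\Lambda^\infty$ relative to $\Lambda$ --- this action has a bounded $\Lambda$-orbit (a condition independent of the choice of base point, by the triangle inequality), so there is $R>0$ with $\|\alpha(\varphi(\lambda))\xi_0-\xi_0\|\le R$ for all $\lambda\in\Lambda$. As $\varphi$ is a map of pairs, $\varphi(\Lambda)\subseteq\Sigma$, so $\varphi(\Lambda)$ is contained in the set $\{\sigma\in\Sigma:\|\alpha(\sigma)\xi_0-\xi_0\|\le R\}$, which is finite. (Here one uses that $\Sigma$ is uniformly discrete, as holds in all cases of interest.)

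Case (ii) proceeds along identical lines, the one new feature being that $\varphi$ is only a quasimorphism, so that precomposition with $\varphi$ no longer respects the group law. This is precisely the situation for which weak quasi-cocycles were designed: a-FFF$L^p$-menability of $(\Sigma,\Sigma^\infty)$ supplies an isometric $L^p$-representation $\pi$ of $\Sigma^\infty$ together with a weak quasi-cocycle $c\colon\Sigma^\infty\to E$ whose relevant sublevel sets on $\Sigma$ are finite, and by the basic closure property that motivates the notion of weak quasi-cocycle, precomposition with the quasimorphism $\varphi$ makes $c\circ\varphi$ a weak quasi-cocycle on $\Lambda^\infty$. Here the finiteness of the defect set $\{\varphi(h)^{-1}\varphi(g)^{-1}\varphi(gh)\}$ of $\varphi$, together with a bound for $\|c\|$ over that finite set, is exactly what keeps the weak-cocycle defect bounded; one moreover obtains that $\bigl|\,\|(c\circ\varphi)(\lambda)\|-\|c(\varphi(\lambda))\|\,\bigr|$ stays bounded over $\lambda\in\Lambda$. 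Property (FFF$L^p$) of $(\Lambda,\Lambda^\infty)$ then bounds $\|(c\circ\varphi)(\lambda)\|$, hence $\|c(\varphi(\lambda))\|$, over $\lambda\in\Lambda$, and finiteness of the corresponding sublevel set of $c$ on $\Sigma$ again forces $\varphi(\Lambda)$ to be finite, using $\varphi(\Lambda)\subseteq\Sigma$.

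The proof is therefore a routine transcription of the group-theoretic rigidity argument, and the only points requiring attention are those peculiar to the relative, approximate setting. First, properness, boundedness of orbits, and the weak-quasi-cocycle estimates must all be taken relative to the \emph{subsets} $\Lambda\subseteq\Lambda^\infty$ and $\Sigma\subseteq\Sigma^\infty$ rather than the enveloping groups. Second, in Case (ii) one must verify that the composition-with-a-quasimorphism mechanism --- in particular the passage from the genuine representation $\pi$ of $\Sigma^\infty$ to the quasi-representation $\pi\circ\varphi$ of $\Lambda^\infty$ underlying $c\circ\varphi$ --- interacts correctly with the finite defect set of $\varphi$, so that boundedness survives restriction to $\Lambda$. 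This bookkeeping is where I expect whatever (mild) difficulty there is to lie.
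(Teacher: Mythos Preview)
Your argument is correct and is precisely the ``group case'' argument to which the paper appeals; the paper itself gives no proof beyond the remark that the result ``can be established as in the group case.'' Two small cosmetic remarks: your parenthetical about $\Sigma$ being uniformly discrete is unnecessary, since in this abstract setting $\Sigma^\infty$ carries the discrete topology and $\Sigma$-properness already means that the sublevel sets on $\Sigma$ are finite; and your claim that $\bigl|\,\|(c\circ\varphi)(\lambda)\|-\|c(\varphi(\lambda))\|\,\bigr|$ is bounded is vacuous, as the two quantities are equal by definition.
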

\begin{remark} In view of the Proposition it would be of interest to have more examples of approximate groups which have (FFF$L^p$) or are a-FFF$L^p$-menable. For instance, it would be of interest to know, whether every higher rank Lie group has Property (FFF$H$). Similarly, one would like to know whether every (coarsely-connected, finitely generated) hyperbolic approximate group $(\Lambda, \Lambda^\infty)$ is a-FFF$H$-menable by an argument similar to the one suggested in \cite{Oz11} for groups.
\end{remark}
Finally, let us suggest our own variations on Property (T) and the Haagerup property. A natural class of maps between approximate groups is given by $2$-Freiman quasimorphisms, i.e., maps $\varphi: (\Lambda, \Lambda^\infty) \to (\Sigma, \Sigma^\infty)$ such that the set $\{\varphi(h)^{-1}\varphi(g)^{-1}\varphi(gh) \mid g, h\in \Lambda\}$ is finite. To obtain a result analogous to Proposition \ref{Rigidity} for such maps, we suggest to introduce the following extensions of Property (TT) and (TTT):
\begin{definition} Let $(\Lambda, \Lambda^\infty)$ be an approximate group, $E$ be an $L^p$-space, and $\pi: \Lambda^\infty \to O(E)$ be an arbitrary map. Then a map $b: \Lambda^\infty \to E$ is called a \emph{$2$-Freiman wq-cocycle} if
\[
\sup_{g,h \in \Lambda} \|b(gh) - b(g) - \pi(g)b(h)\| < \infty.
\]
We say that $(\Lambda, \Lambda^\infty)$ has Property (FFFF$L^p$) if every $2$-Freiman wq-cocycle on $\Lambda^\infty$ is bounded on $\Lambda$, and that it is a-FFFF$L^p$-menable if there exists a  $2$-Freiman wq-cocycle on $\Lambda^\infty$ which is proper on $\Lambda$.
\end{definition}
We leave the following as an exercise:
\begin{proposition} Let $p \in (1, \infty)$, $(\Lambda,\Lambda^\infty)$, $(\Sigma, \Sigma^\infty)$ be approximate groups and $\varphi: (\Lambda, \Lambda^\infty) \to (\Sigma, \Sigma^\infty)$. Assume that $(\Lambda, \Lambda^\infty)$ has (FFFF$L^p$), $(\Sigma, \Sigma^\infty)$ is a-FFFF$L^p$-menable and $\varphi$ is a 2-Freiman quasimorphism.
Then $\varphi(\Lambda)$ is finite.
\end{proposition}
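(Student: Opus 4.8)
The plan is to run the pullback argument used for Proposition~\ref{Rigidity}, now in the $2$-Freiman setting. Since $(\Sigma,\Sigma^\infty)$ is a-FFFF$L^p$-menable, fix an $L^p$-space $E$, a map $\pi\colon\Sigma^\infty\to O(E)$ and a $2$-Freiman wq-cocycle $b\colon\Sigma^\infty\to E$ (with respect to $\pi$) which is proper on $\Sigma$; set $D:=\sup_{s,t\in\Sigma}\|b(st)-b(s)-\pi(s)b(t)\|<\infty$. Put $c:=b\circ\varphi\colon\Lambda^\infty\to E$ and $\rho:=\pi\circ\varphi\colon\Lambda^\infty\to O(E)$. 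Once we know that $c$ is a $2$-Freiman wq-cocycle on $\Lambda^\infty$ (with respect to $\rho$), Property (FFFF$L^p$) of $(\Lambda,\Lambda^\infty)$ forces $c$ to be bounded on $\Lambda$, i.e.\ $b(\varphi(\Lambda))$ is bounded in $E$; since $\varphi$ is a map of pairs we have $\varphi(\Lambda)\subseteq\Sigma$, and since $b$ is proper on $\Sigma$ the set $\{s\in\Sigma:\|b(s)\|\le R\}$ is finite for every $R$, so $\varphi(\Lambda)$ is finite, which is the assertion. Thus everything reduces to verifying that $c$ is a $2$-Freiman wq-cocycle.

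To this end we must bound $\|b(\varphi(gh))-b(\varphi(g))-\pi(\varphi(g))b(\varphi(h))\|$ uniformly over $g,h\in\Lambda$. Since $\varphi$ is a $2$-Freiman quasimorphism, $K:=\{\varphi(h)^{-1}\varphi(g)^{-1}\varphi(gh)\mid g,h\in\Lambda\}$ is finite and $\varphi(gh)=\varphi(g)\varphi(h)\,k(g,h)$ with $k(g,h)\in K$; in particular $\varphi(\Lambda^2)\subseteq\varphi(\Lambda)\varphi(\Lambda)K$, a set contained in finitely many translates of $\Sigma$. The elements $\varphi(g),\varphi(h)$ both lie in $\Sigma$, so the defining inequality for $b$ applies to the pair $(\varphi(g),\varphi(h))$ and gives $\|b(\varphi(g)\varphi(h))-b(\varphi(g))-\pi(\varphi(g))b(\varphi(h))\|\le D$; hence it remains to bound the ``Freiman error'' $\|b(\varphi(g)\varphi(h)k)-b(\varphi(g)\varphi(h))\|$ uniformly over $g,h\in\Lambda$ and $k=k(g,h)\in K$. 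If $\varphi$ is an honest morphism then $K=\{e\}$, this error vanishes, $c$ is a $2$-Freiman wq-cocycle with defect $\le D$, and the first paragraph finishes the proof. In general one first replaces $b$ by a $2$-Freiman wq-cocycle relative to a commensurable approximate subgroup $\Sigma'\subseteq\Sigma^\infty$ (with the same enveloping group), chosen large enough that $\varphi(\Lambda^2)\cup K^{-1}\subseteq\Sigma'$ and that $\Sigma$-properness survives as $\Sigma'$-properness; writing $\varphi(g)\varphi(h)=\big(\varphi(g)\varphi(h)k\big)k^{-1}$ and applying the defining inequality of $b$ (relative to $\Sigma'$) to the pair $\big(\varphi(gh),k^{-1}\big)$ then yields $\|b(\varphi(g)\varphi(h)k)-b(\varphi(g)\varphi(h))\|\le\max_{k\in K}\|b(k^{-1})\|+D'$, where $D'<\infty$ is the defect of $b$ relative to $\Sigma'$, and this is finite because $K$ is finite. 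Combining the estimates, $c$ is a $2$-Freiman wq-cocycle on $\Lambda^\infty$ with finite defect, and we conclude as in the first paragraph.

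The step I expect to be the genuine obstacle is precisely this last manoeuvre. Unlike in the group case, and unlike in Proposition~\ref{Rigidity} where the wq-cocycle inequality holds on all of $\Sigma^\infty\times\Sigma^\infty$, the defining inequality of a $2$-Freiman wq-cocycle only controls products $st$ with \emph{both} $s,t\in\Sigma$, so $b(\varphi(g)\varphi(h)k)$ cannot be telescoped by peeling off $k$: the ``inner'' product $\varphi(g)\varphi(h)$ already lies in $\Sigma^2$, not in $\Sigma$. One is therefore forced to enlarge the window to $\Sigma'$, and the one point that is not purely formal is that a-FFFF$L^p$-menability is insensitive to replacing $\Sigma$ by a commensurable approximate subgroup of $\Sigma^\infty$ with the same enveloping group --- equivalently, that a $2$-Freiman wq-cocycle proper on $\Sigma$ can be upgraded to one proper on $\Sigma'$. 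Granting this commensurability-invariance, the argument closes, exactly paralleling the proof of Proposition~\ref{Rigidity}(ii).
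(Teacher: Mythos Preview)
The paper does not supply a proof of this proposition; it is explicitly left as an exercise. Your pullback strategy is the natural one and parallels Proposition~\ref{Rigidity}, and you have correctly located the genuine difficulty: the $2$-Freiman wq-cocycle inequality for $b$ only controls products $st$ with $s,t\in\Sigma$, whereas the telescoping step you need involves $b(\varphi(gh)k^{-1})$ with $\varphi(gh)\in\varphi(\Lambda^2)\subset\Sigma^2K$, which lies outside the domain where the defect bound is assumed to hold.

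However, your resolution is not a proof but a deferral: you ``grant'' that a-FFFF$L^p$-menability is invariant under passing from $\Sigma$ to a commensurable $\Sigma'\supset\Sigma$ and then conclude. This step is neither proved nor obvious, and you yourself flag it as the one non-formal point. Note the asymmetry: the $2$-Freiman defect condition relative to $\Sigma'$ is strictly \emph{stronger} than relative to $\Sigma$ (it quantifies over more pairs), so the given $b$ need not satisfy it --- indeed $b$ is entirely unconstrained on $\Sigma^\infty\setminus\Sigma^2$, so there is no hope that $b$ itself works for $\Sigma'$. Producing a \emph{new} pair $(b',\pi')$ with bounded defect on all of $\Sigma'\times\Sigma'$ while retaining properness on $\Sigma$ would require a genuine construction, and you give no indication of how to carry it out. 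Until this commensurability-invariance is actually established, your argument for the general $2$-Freiman quasimorphism case remains incomplete; only the morphism case (where $K=\{e\}$) is fully proved.
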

In relation to this, we would like to advertise the following problem:
\begin{problem} Find examples of approximate groups with Property  (FFFF$L^p$). Also consider the case of higher order $k$-Freiman cocycles instead of $2$-Freiman wq-cocycles.
\end{problem}
\subsection{On the method of proof}
The proofs of Theorem \ref{ThmHaagerupIntro} and Theorem \ref{KazhdanType} are based on a version of cocycle induction from a strong uniform approximate lattice $\Lambda$ to the ambient lcsc group $G$, which may be of independent interest. This construction is general enough to also apply to (weak) quasi-cocycles with values in $L^p$-spaces. Theorem \ref{ThmHaagerupIntro} then follows from the observation that induction preserves properness of cocycles. On the other hand, it is not obvious (even in the Hilbert setting) that boundedness of the induced cocycle implies boundedness of the original cocycle on $\Lambda$. In case where $\Lambda = \Lambda(G, H, \Gamma, W)$ is a model set, we can circumvent this problem by using an alternative model for induction, which allows us to extend the induced $G$-cocycle to a cocycle on $G \times H$. Using results from \cite{BHP1} we are then able to transfer the problem to a problem about the homogeneous space $(G\times H)/\Gamma$. Even after the reduction to the homogeneous setting, the proof of Theorem \ref{KazhdanType} remains non-trivial and relies on a strengthening of arguments developed by Ozawa in his work on Property (TTT).

%

\tableofcontents

\subsection*{Acknowledgements}
We thank Marc Burger for pointing out the potential relevance of \cite{Oz11} for approximate groups and Yves de Cornulier for explanations concerning his work on relative Property (T). We thank the Departments of Mathematics at Technion and Chalmers, and the Gothenburg Center of Advanced Studies for their hospitality during our mutual visits. We thank the anonymous referees for detailed comments and suggestions.

\section{Approximate groups and approximate lattices}
\label{approxgrps}

\subsection{Basic definitions}

\begin{definition}
An \emph{approximate group} is a pair $(\Lambda, \Lambda^\infty)$, where $ \Lambda^\infty$ is a group and $\Lambda \subset \Lambda^\infty$ is a subset which satisfies the following conditions.
\begin{enumerate}[({AG}1)]\label{DefAppGrp}\label{DefApproxSubgp}
\item $\Lambda$ is symmetric, i.e., $\Lambda = \Lambda^{-1}$, and contains the identity.
\item There exists a finite subset $F_\Lambda \subset  \Lambda^\infty$ such that $\Lambda^2 \subset \Lambda F_\Lambda$. 
\item $\Lambda$ generates $\Lambda^\infty$.
\end{enumerate}
\end{definition}
If $(\Lambda, \Lambda^\infty)$ is an approximate group and $G$ is a group containing $\Lambda^\infty$, then the subset $\Lambda \subset G$ is called an \emph{approximate subgroup} of $\Gamma$. We then call $\Lambda^\infty$ the \emph{enveloping group} of $\Lambda$ in $G$.

We will be particularly interested in approximate subgroups of locally compact second countable (lcsc) groups. Recall that if $G$ is a lcsc group, then $G$ admits a metric which is 
left-invariant, proper and defines the given topology of $G$. We refer to any such metric as a \emph{left-admissible} metric on $G$. Also recall that if $(X,d)$ is an metric space, then a subset $\Lambda \subset X$ is called a \emph{Delone set} if there exist constants $R>r>0$ (called the \emph{Delone parameters} of $\Lambda$) such that
\begin{itemize}
\item  $\Lambda$ is $r$-\emph{uniformly discrete} i.e., $d(x, y) \geq r$ for all $x,y \in \Lambda$;
\item $\Lambda$ is $R$-\emph{relatively dense} in $X$, i.e., for every $x \in X$ there exists $y \in \Lambda$ with $d(x,y) \leq R$.
\end{itemize}
If $G$ is a lcsc group, then a subset $\Lambda \subset G$ is called a Delone set if it is a Delone set with respect to some left-admissible metric on $G$. One can show (see e.g. \cite[Prop. 2.2]{BH}) that this notion does not depend on the choice of left-admissible metric.
\begin{definition} An approximate subgroup $\Lambda$ of a lcsc group $G$ is called a \emph{uniform approximate lattice} provided $\Lambda$ is a Delone set in $G$.
\end{definition}
Note that a \emph{subgroup} $\Lambda < G$ is a Delone set if and only if it is a uniform lattice, hence the name.
\begin{remark}\label{Syndetic} Let $G$ be a lcsc group. Then a subset $A \subset G$ is relatively dense, if and only if there exists a compact subset $K \subset G$ such that $AK = G$ (see \cite[Prop. 2.2]{BH}). Now assume that $\Lambda \subset G$ is a Delone subset and let $\Sigma \subset \Lambda$. Then the following are equivalent (see \cite[Cor. 2.10]{BH}):
\begin{enumerate}[(i)]
\item $\Sigma$ is relatively dense in $G$.
\item There exists a finite set $F \subset \Lambda^{-1}\Lambda$ such that $\Sigma \subset \Lambda \subset \Sigma F$.
\end{enumerate}
In this situation we say that $\Sigma$ is \emph{left-syndetic} in $\Lambda$. Note that every symmetric left-syndetic subset of an approximate lattice which contains the identity is again an approximate lattice.
\end{remark}

\subsection{The hull of an approximate lattice}

For the rest of this section we fix a uniform approximate lattice $\Lambda$ in a lcsc group $G$.  We denote by $\mathcal C(G)$ the collection of all closed subsets of $G$. Then $G$ acts on $\mathcal C(G)$ by left-translations, and we denote the orbit of an element $P \in \mathcal C(G)$ by $G.P$. The set $\mathcal C(G)$ carries a compact Hausdorff topology called the \emph{Chabauty-Fell topology}, whose basic open sets are given by
\[
\{P \in \mathcal C(G) \mid P \cap K = \emptyset\} \quad \text{and} \quad \{P \in \mathcal C(G) \mid P \cap V \neq \emptyset\},
\]
where $K$ ranges over all compact subsets of $G$ and $V$ ranges over all open subsets of $G$. We refer the reader to \cite{BP, BH, BHP1} for a detailed discussion of this topology.

\begin{definition} Given an approximate lattice $\Lambda \subset G$, the closure $X_\Lambda := \overline{G.\Lambda}$ of the left-translation orbit $G.\Lambda$ in $\mathcal C(G)$ with respect to the Chabauty-Fell topology is called the \emph{hull} of $\Lambda$.
\end{definition}
Since the action of $G$ on $\mathcal C(G)$ is jointly continuous, so is the action of $G$ on the hull of $\Lambda$. We will often use the following fact (see \cite[Lemma 4.6]{BH}):
\begin{lemma}\label{P-1P} If $P \in X_\Lambda$, then $P^{-1}P \subset \Lambda^{-1}\Lambda = \Lambda^2$, hence $P$ and $P^{-1}P$ are uniformly discrete.\qed
\end{lemma}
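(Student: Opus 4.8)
The strategy is to reduce both assertions to a single fact about $\Lambda$ alone, namely that $\Lambda^2$ is uniformly discrete; granting this, the inclusion $P^{-1}P\subset\Lambda^2$ follows from an elementary limiting argument in the Chabauty--Fell topology, and the uniform discreteness of $P$ and of $P^{-1}P$ then comes for free.

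First I would show that $\Lambda^2$ is uniformly discrete (hence locally finite, hence closed in $G$). Fix a left-admissible metric $d$ on $G$ and a uniform-discreteness constant $r>0$ for $\Lambda$; note that $\Lambda$, being Delone, is in particular locally finite and therefore closed in $G$. Iterating (AG2) produces a finite set $F\subset\Lambda^\infty$ with $\Lambda^4\subset\Lambda F$. Since $d$ is proper, each set $\overline{B(e,r)}f^{-1}$ with $f\in F$ is compact, hence meets the $r$-uniformly discrete set $\Lambda$ in a finite set; as $\Lambda^4\cap\overline{B(e,r)}\subset\bigcup_{f\in F}(\Lambda\cap\overline{B(e,r)}f^{-1})f$, the set $\Lambda^4\cap\overline{B(e,r)}$ is finite. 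This finite set contains $e$, so there is $s\in(0,r)$ with $\Lambda^4\cap B(e,s)=\{e\}$. For $x\ne y$ in $\Lambda^2$ one has $x^{-1}y\in(\Lambda^2)^{-1}\Lambda^2=\Lambda^4$ and $x^{-1}y\ne e$, whence $d(x,y)=d(e,x^{-1}y)\ge s$ by left-invariance. Thus $\Lambda^2$ is $s$-uniformly discrete. (This could also be quoted from \cite{BH}.)

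Next I would prove $P^{-1}P\subset\Lambda^2$. Since $P\in X_\Lambda=\overline{G.\Lambda}$, choose a net $(g_i)$ in $G$ with $g_i\Lambda\to P$ in the Chabauty--Fell topology, and fix $p,q\in P$. As each $g_i\Lambda$ is closed and $G$ is a proper metric space, pick $\lambda_i,\mu_i\in\Lambda$ realizing $d(p,g_i\Lambda)$ and $d(q,g_i\Lambda)$. Using the basic open sets of the form $\{Q\in\mathcal C(G):Q\cap V\ne\emptyset\}$, convergence $g_i\Lambda\to P$ forces $g_i\Lambda$ to eventually meet every open ball about $p$, resp.\ about $q$; hence $d(p,g_i\Lambda)\to0$ and $d(q,g_i\Lambda)\to0$, i.e.\ $g_i\lambda_i\to p$ and $g_i\mu_i\to q$. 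By joint continuity of multiplication and inversion, $\lambda_i^{-1}\mu_i=(g_i\lambda_i)^{-1}(g_i\mu_i)\to p^{-1}q$. Each $\lambda_i^{-1}\mu_i$ lies in $\Lambda^{-1}\Lambda=\Lambda^2$, which is closed by the previous step, so $p^{-1}q\in\Lambda^2$; as $p,q$ were arbitrary, $P^{-1}P\subset\Lambda^2$.

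Finally, the two uniform-discreteness claims follow at once: any subset of the $s$-uniformly discrete set $\Lambda^2$ is $s$-uniformly discrete, so $P^{-1}P$ is; and for $p\ne q$ in $P$ the element $p^{-1}q$ lies in $\Lambda^2\setminus\{e\}$, so $d(p,q)=d(e,p^{-1}q)\ge s$, i.e.\ $P$ is $s$-uniformly discrete. I expect the only non-formal point to be the first step --- concretely, that $e$ is an isolated point of $\Lambda^4$ --- and even there the content reduces to the packing estimate that a compact subset of $G$ contains only finitely many points of a uniformly discrete set. One should also recall from \cite{BH} that the Delone property, and hence the statement, does not depend on the chosen left-admissible metric.
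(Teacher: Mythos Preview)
Your proof is correct. The paper does not actually prove this lemma here: it simply cites \cite[Lemma~4.6]{BH} and places a \qedsymbol, so there is no in-paper argument to compare against. Your self-contained argument---first showing $\Lambda^2$ (in fact $\Lambda^4$) is uniformly discrete via the approximate-group covering $\Lambda^4\subset\Lambda F$, then passing to the Chabauty--Fell limit to get $P^{-1}P\subset\Lambda^2$---is the standard route and is presumably close to what appears in the cited reference; the paper itself notes elsewhere that ``if $\Lambda\subset G$ is an approximate lattice, then so is $\Lambda^2$'', which is exactly your Step~1. One cosmetic remark: since $G$ is second countable, the Chabauty--Fell topology on $\mathcal C(G)$ is metrizable, so sequences suffice and nets are not needed.
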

We will also need the following fact:
\begin{lemma}\label{UniformDeloneHull} Let $\Lambda \subset G$ be Delone with parameters $R>r>0$. Then every $P \in X_\Lambda$ is Delone with parameters $R>r>0$.
\end{lemma}
\begin{proof} Let $P \in X_\Lambda$ and fix $g_n \in G$ such that $g_n\Lambda \to P$. For every $x \in G$ and $n \in \mathbb N$ there then exists $\lambda_n \in \Lambda\cap B_R(g_n^{-1}x)$. A subsequence $g_{n_k}\lambda_{n_k}$ then converges to some $p \in B_R(x)$, and then $p \in P$ by \cite[Prop. E.1.2]{BP}, showing that $P$ is $R$-relatively dense. Conversely, if $x,y \in P$ are distinct, then by the same proposition there exist $x_n, y_n \in \Lambda$ such that $g_nx_n \to x$ and $g_ny_n \to y$. For sufficiently large $n$ we then have $x_n \neq y_n$ and hence $d(x_n, y_n) \geq r$, which implies $d(x,y) \geq r$, showing that $P$ is $r$-uniformly discrete.
\end{proof}

Note that the hull $X_\Lambda$ is compact and that the $G$-action on $X_\Lambda$ is jointly continuous. This implies in particular, that $X_\Lambda$ admits a $\mu$-stationary probability measure for every admissible probability measure $\mu$ on $G$. However, if $G$ is non-amenable, then  it is unclear whether the hull supports any $G$-\emph{invariant} probability measure.
\begin{definition} $\Lambda$ is called a \emph{strong uniform approximate lattice} if the hull $X_\Lambda$ admits a $G$-invariant probability measure.
\end{definition}
\subsection{Bounded Borel sections over the hull}
We keep the notation of the previous subsection. Moreover, we fix a left-admissible metric $d$ on $G$ and denote by $R>r>0$ the Delone parameters of $\Lambda$ with respect to $d$. In analogy with the group case we define:
\begin{definition}  A map $s: X_\Lambda \to G$ is called a \emph{section} provided $s(P) \in P$ for all $P \in X_\Lambda$. A section is called \emph{bounded} if its image is pre-compact.\end{definition}
\begin{proposition}\label{ExSection} Let $\Lambda \subset G$ be a uniform approximate lattice. Then there exists a bounded Borel section $s: X_\Lambda \to G$. In fact, $s$ can be chosen to take values in $B_{2R}(e)$.
\end{proposition}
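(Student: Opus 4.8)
The plan is to build $s$ by a greedy ``first-hit'' selection against a fixed countable family of small open sets, arranged so that no non-measurable tie-breaking is ever needed. First I would record two properties shared by all $P \in X_\Lambda$. By Lemma \ref{P-1P} there is a constant $\rho > 0$ (independent of $P$) such that every $P \in X_\Lambda$ is $\rho$-uniformly discrete. Moreover, since $e \in \Lambda$ and $\Lambda$ is $R$-relatively dense, every left-translate $g\Lambda$ meets the compact ball $\overline{B_R(e)}$; since $\{Q \in \mathcal{C}(G) : Q \cap \overline{B_R(e)} \neq \emptyset\}$ is closed in the Chabauty--Fell topology (its complement $\{Q : Q \cap \overline{B_R(e)} = \emptyset\}$ is a basic open set) and contains $G.\Lambda$, it contains all of $X_\Lambda$; thus $P \cap \overline{B_R(e)} \neq \emptyset$ for every $P \in X_\Lambda$.

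Next, using that $G$ is locally compact, second countable and metrized by the proper metric $d$, I would fix a countable family $\{W_j\}_{j\geq 1}$ of open subsets of $G$ with $\overline{W_j} \subseteq B_{2R}(e)$, $\operatorname{diam}(W_j) < \rho$, and $\overline{B_R(e)} \subseteq \bigcup_j W_j$; concretely one may take the balls $B_{1/m}(x_k)$ for a countable dense set $\{x_k\} \subseteq G$ and those pairs with $2/m < \rho$ and $\overline{B_{1/m}(x_k)} \subseteq B_{2R}(e)$, which is possible because $B_{2R}(e)$ is open and $d$ is proper. For $P \in X_\Lambda$, the set $P$ meets $\overline{B_R(e)}$, hence some $W_j$, so there is a least index $j(P)$ with $P \cap W_{j(P)} \neq \emptyset$; and $\operatorname{diam}(W_{j(P)}) < \rho$ together with $\rho$-uniform discreteness of $P$ forces $P \cap W_{j(P)}$ to be a single point, which I define to be $s(P)$. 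Then $s(P) \in W_{j(P)} \subseteq B_{2R}(e)$, and the image of $s$ lies in the compact set $\overline{B_{2R}(e)}$, so $s$ is a bounded section.

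To see that $s$ is Borel, I would again use the Chabauty--Fell description of the topology: for every open $V \subseteq G$ the set $\{P \in X_\Lambda : P \cap V \neq \emptyset\}$ is open, so $\{P \in X_\Lambda : P \cap V = \emptyset\}$ is closed. Hence each set $A_j := \{P \in X_\Lambda : P \cap W_i = \emptyset \text{ for } i < j \text{ and } P \cap W_j \neq \emptyset\}$ is Borel, and $(A_j)_{j\geq 1}$ partitions $X_\Lambda$. For an open $U \subseteq G$ one has $s^{-1}(U) \cap A_j = A_j \cap \{P \in X_\Lambda : P \cap (W_j \cap U) \neq \emptyset\}$, since on $A_j$ the value $s(P)$ is the unique point of $P$ lying in $W_j$, and that point lies in $U$ exactly when $P$ meets the open set $W_j \cap U$; this is Borel, and summing over $j$ shows $s^{-1}(U)$ is Borel. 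Therefore $s$ is Borel. (Alternatively one could invoke the Kuratowski--Ryll-Nardzewski selection theorem applied to the closed-valued multifunction $P \mapsto P \cap \overline{B_{2R}(e)}$, but verifying its measurability amounts to essentially the same computation.)

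The main obstacle is not a single deep step but the simultaneous bookkeeping in the choice of $\{W_j\}$: it must be countable, have diameters below the uniform-discreteness constant $\rho$ (so that $P \cap W_j$ has at most one point and no further, potentially non-Borel, choice among candidate points is required), and have closures inside $B_{2R}(e)$ while still covering $\overline{B_R(e)}$ --- this last constraint is precisely what consumes the factor $2$ in the bound $B_{2R}(e)$. The only other point requiring care is the transfer from $\Lambda$ to an arbitrary $P$ in the hull, i.e.\ that $\rho$-uniform discreteness and the property of meeting $\overline{B_R(e)}$ persist under Chabauty--Fell limits, which follows from the closedness of the corresponding subsets of $\mathcal{C}(G)$.
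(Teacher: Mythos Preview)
Your proof is correct and follows essentially the same strategy as the paper's: both exploit that every $P\in X_\Lambda$ meets $\overline{B_R(e)}$ and is uniformly discrete with a fixed constant, then select the unique point of $P$ in the first small ball it hits. The only cosmetic difference is that the paper uses compactness of $X_\Lambda$ to reduce to a \emph{finite} cover by balls $B_{r/2}(x_i)$ (and notes that the local sections are even continuous), whereas you work with a fixed countable cover of $\overline{B_R(e)}$ and verify Borel measurability directly; both routes yield the same bound $s(P)\in B_{2R}(e)$.
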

\begin{proof} It follows from Lemma \ref{UniformDeloneHull} that for every $P \in X_\Lambda$ we can choose $x \in P \cap B_R(e)$ and define an open neighbourhood of $P$ in $X_\Lambda$ by $U(P, x) := \{Q \in X_\Lambda \mid Q \cap B_{r/2}(x) \neq \emptyset\}$. Note that if $Q \in U(P,x)$, then since $Q$ is $r$-discrete, there exists a unique point $\sigma_{P, x}(Q) \in Q \cap \overline{B_{r/2}(x)}$ (which is in fact contained in $B_{r/2}(x)$) and thus we obtain a section $\sigma_{P, x}: U_{P, x} \to G$ over $U_{P,x}$. It follows from \cite[Corollary 4.7]{BH} (applied with $K := \overline{B_{r/2}(x)}$) that if $Q_n \in U(P, x)$ converge to $Q \in U(P,x)$, then  $\sigma_{P, x}(Q_n) \to \sigma_{P, x}(Q)$, hence $\sigma_{P, x}$ is continuous.

Since the open sets $U(P, x)$ cover the compact space $X_\Lambda$ there exist finitely many elements $P_1, \dots, P_n \in X_\Lambda$ and $x_i \in P_i$ such that
\[
X_\Lambda = U(P_1, x_1) \cup \dots \cup U(P_n, x_n).
\]
We thus obtain a Borel section over $X_\Lambda$ by setting
\[
\sigma(Q) := \sigma_{P_{j_Q}, x_{j_Q}}(Q), \text{ where } j_Q := \min\{j \in \{1, \dots, n\} \mid Q \in U(P_j, x_j)\}.
\]
By definition, $\min d(\sigma(Q), x_j) \leq r/2$, and since $x_j \in B_R(e)$ we deduce that $\sigma(Q) \in B_{R+r}(e) \subset B_{2R}(e)$.
\end{proof}
\begin{lemma}\label{LemmabetaCocycle} Let $s: X_\Lambda \to G$ be a Borel section. Then for all $g \in G$ and $P \in X_\Lambda$ we have
\[
\beta_s(g, P) := s(gP)^{-1}gs(P) \in \Lambda^2,
\] 
and the function $\beta_s: G \times X_\Lambda \to \Lambda^2$ satisfies the cocycle identity
\begin{equation}\label{betaCocycle}
\beta_s(g_1g_2, P) = \beta_s(g_1, g_2P) \beta(g_2, P) \quad (g_1, g_2 \in G, P \in X_\Lambda).
\end{equation}
Moreover, the image of $\beta_s$ is uniformly discrete in $G$. If $s$ is bounded, then for every compact subset $K \subset G$ the image $\beta_s(K \times X_\Lambda)$ of $K \times X_\Lambda$ is finite.
\end{lemma}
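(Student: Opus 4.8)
The plan is to treat the four assertions separately; the first two are immediate computations, and the only point deserving attention is the uniform discreteness statement. For the membership $\beta_s(g,P)\in\Lambda^2$: since $s$ is a section we have $s(P)\in P$ and $s(gP)\in gP$, so that $g^{-1}s(gP)\in P$. Writing
\[
\beta_s(g,P)=s(gP)^{-1}g\,s(P)=\bigl(g^{-1}s(gP)\bigr)^{-1}s(P),
\]
we see that $\beta_s(g,P)\in P^{-1}P$, and Lemma~\ref{P-1P} gives $P^{-1}P\subset\Lambda^{-1}\Lambda=\Lambda^2$. The cocycle identity \eqref{betaCocycle} is then a formal telescoping computation: for $g_1,g_2\in G$ and $P\in X_\Lambda$,
\[
\beta_s(g_1,g_2P)\,\beta_s(g_2,P)=s(g_1g_2P)^{-1}g_1 s(g_2P)\,s(g_2P)^{-1}g_2 s(P)=s(g_1g_2P)^{-1}g_1g_2\,s(P)=\beta_s(g_1g_2,P).
\]

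For the uniform discreteness of the image, the key observation is that $\Lambda$ itself belongs to $X_\Lambda$: it is closed (a uniformly discrete subset of a proper metric space is closed, since any sequence in it that converges in $G$ is eventually constant) and it equals $e.\Lambda$, hence lies in $\overline{G.\Lambda}=X_\Lambda$. Applying Lemma~\ref{P-1P} with $P=\Lambda$ therefore shows that $\Lambda^2=\Lambda^{-1}\Lambda$ is uniformly discrete, say with parameter $\rho>0$; since $\beta_s(G\times X_\Lambda)\subset\Lambda^2$ by the first part, the image of $\beta_s$ is uniformly discrete with the same parameter $\rho$. (One could instead note that $\Lambda^2\subset\Lambda F_\Lambda$ is a finite union of right-translates of $\Lambda$, but recognising $\Lambda$ as a point of the hull is cleaner and supplies a parameter independent of $P$.)

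Finally, suppose $s$ is bounded, so that $B:=\overline{s(X_\Lambda)}$ is compact, and let $K\subset G$ be compact. For every $(g,P)\in K\times X_\Lambda$ we have $s(gP)^{-1}\in B^{-1}$, $g\in K$ and $s(P)\in B$, whence $\beta_s(g,P)\in B^{-1}KB$, and $\overline{B^{-1}KB}$ is compact. Combining this with $\beta_s(K\times X_\Lambda)\subset\Lambda^2$ and the uniform discreteness of $\Lambda^2$, we conclude that $\beta_s(K\times X_\Lambda)$ is contained in the intersection of a uniformly discrete set with a compact set, which is finite (cover the compact set by finitely many balls of radius $\rho/2$, each containing at most one point of $\Lambda^2$).

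The main obstacle — such as it is — is purely organisational: ensuring that uniform discreteness of $\Lambda^2$ is available with a parameter that does not depend on $P$, which is exactly what the identification $\Lambda\in X_\Lambda$ together with Lemma~\ref{P-1P} provides. Beyond that, the argument requires no further input.
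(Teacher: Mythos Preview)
Your proof is correct and follows essentially the same route as the paper's: the containment $\beta_s(g,P)\in P^{-1}P\subset\Lambda^2$ via Lemma~\ref{P-1P}, the telescoping verification of the cocycle identity, and the finiteness from $\beta_s(K\times X_\Lambda)\subset B^{-1}KB\cap\Lambda^2$ are all handled identically. The only cosmetic difference is in justifying the uniform discreteness of $\Lambda^2$: the paper simply remarks that $\Lambda^2$ is again an approximate lattice (hence Delone, hence uniformly discrete), whereas you observe $\Lambda\in X_\Lambda$ and invoke Lemma~\ref{P-1P} with $P=\Lambda$ --- but you also note the finite-translates argument as an alternative, which is exactly the paper's reasoning.
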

\begin{proof} It follows from Lemma \ref{P-1P} that
\[
s(gP)^{-1}gs(P) \in (gP)^{-1}gP = P^{-1}P \subset \Lambda^2.
\]
Moreover, the identity
\[
 (s(g_1g_2P)^{-1}g_1s(g_2P))(s(g_2P)^{-1}g_2s(P)) =  s(g_1g_2P)^{-1}g_1g_2s(P)
\]
implies \eqref{betaCocycle}. Finally note that if $\Lambda \subset G$ is an approximate lattice, then so is $\Lambda^2$, and thus $\beta_s$ takes values in a uniformly discrete subset of $G$. In particular, if $g$ varies over a compact set $K$ and $s$ is chosen to take values in a bounded set $B$, then $\beta_s(g, P)$ is contained in set $B K B \cap \Lambda^2$, which is finite.
\end{proof}
In the sequel we refer to $\beta_s$ as the \emph{cocycle associated with the section $s$}.  Note that the cocycle identity \eqref{betaCocycle} implies that $\beta(e, P) = e$ (since $\beta(e, P) = \beta(e, P)^2$), hence for all $g \in G$ we have $e = \beta(g^{-1}g, P) = \beta(g^{-1}, gP)\beta(g, P)$, i.e.,
\begin{equation}\label{betaInverse}
\beta(g, P) = \beta(g^{-1}, gP)^{-1}.
\end{equation}
We also record the following standard fact for later reference:
\begin{lemma}\label{ChangeOfSection} If $s_1, s_2: X_\Lambda \to G$ are Borel sections, then 
\[
\beta_{s_2}(g, P) =u(gP)^{-1}\beta_{s_1}(g,P)u(P),
\]
where $u: X_\Lambda \to \Lambda^2$ is given by $u(P) :=  s_1(P)^{-1}s_2(P)$.
\end{lemma}
\begin{proof} By Lemma \ref{P-1P} we have  $s_1(P)^{-1}s_2(P)\in P^{-1}P \subset \Lambda^{-1}\Lambda = \Lambda^2$, hence $u$ is well-defined. The formula relating $\beta_{s_1}$ and $\beta_{s_2}$ then follows from
\[ s_2(gP)^{-1}gs_2(P)\quad = \quad s_2(gP)^{-1}s_1(gP)(s_1(gP)^{-1}gs_1(P)) s_1(P)^{-1}s_2(P).\]
\end{proof}

Now let $s: X_\Lambda \to G$ be a bounded Borel section taking values in $B_{2R}(e)$. Given an element $g \in G$ we denote $\|g\| := d(g,e)$, and given $g \in G$ and $\lambda \in \Lambda^2$ we define
\begin{equation}\label{Xglambda}
X_\Lambda(g, \lambda) := \{P \in X_{\Lambda}\mid \beta(g^{-1}, P)^{-1} = \lambda\}.
\end{equation}
The following observation will be used in the definition of an induced affine isometric action.
\begin{lemma}\label{HaagerupTriangleInequality} If $X_\Lambda(g, \lambda) \neq \emptyset$, then 
\[
\|g\| - 4R \quad<\quad \|\lambda\| \quad < \quad \|g\|+4R.
\]
\end{lemma}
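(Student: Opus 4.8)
The plan is to unwind the definition of $\beta$ and $X_\Lambda(g,\lambda)$ and then apply the triangle inequality twice, using that the section $s$ takes values in $B_{2R}(e)$. First I would take $P \in X_\Lambda(g,\lambda)$, so that by \eqref{Xglambda} we have $\lambda = \beta(g^{-1},P)^{-1}$. By \eqref{betaInverse} applied with $g$ replaced by $g^{-1}$ (and $P$ replaced by $g^{-1}P$, or directly reading off the definition of $\beta_s$), we get
\[
\beta(g^{-1},P)^{-1} = \beta(g, g^{-1}P) = s(P)^{-1} g \, s(g^{-1}P),
\]
so that $\lambda = s(P)^{-1} g \, s(g^{-1}P)$, i.e. $g = s(P)\,\lambda\, s(g^{-1}P)^{-1}$.

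Now both $s(P)$ and $s(g^{-1}P)$ lie in $B_{2R}(e)$, i.e. $\|s(P)\| < 2R$ and $\|s(g^{-1}P)^{-1}\| = \|s(g^{-1}P)\| < 2R$ (left-admissible metrics are symmetric in the sense that $d(x,e) = d(e, x^{-1})$; if one prefers, one can also note $d(g,e) = d(e, g^{-1})$ follows from left-invariance, which is the standard fact being used here). The identity $g = s(P)\,\lambda\, s(g^{-1}P)^{-1}$ together with the triangle inequality and left-invariance of $d$ gives
\[
\|g\| = d(g,e) \leq \|s(P)\| + \|\lambda\| + \|s(g^{-1}P)^{-1}\| < \|\lambda\| + 4R,
\]
which is the left-hand inequality $\|g\| - 4R < \|\lambda\|$. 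Conversely, from $\lambda = s(P)^{-1} g\, s(g^{-1}P)$ the same reasoning yields
\[
\|\lambda\| \leq \|s(P)^{-1}\| + \|g\| + \|s(g^{-1}P)\| < \|g\| + 4R,
\]
which is the right-hand inequality.

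There is no real obstacle here; the only point requiring a little care is bookkeeping the inverses in the cocycle identity so that $\beta(g^{-1},P)^{-1}$ is correctly expressed as $s(P)^{-1} g\, s(g^{-1}P)$ rather than something off by an inverse or a translate, and making sure both section values that appear are genuinely being evaluated at points of $X_\Lambda$ (namely $P$ and $g^{-1}P$) so that the bound $B_{2R}(e)$ applies to each. Once that is set up, the estimate is just two applications of the triangle inequality.
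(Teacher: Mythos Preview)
Your proof is correct and follows essentially the same approach as the paper: express $\lambda$ in terms of $g$ and two section values via the definition of $\beta_s$, then apply the triangle inequality using that $s$ takes values in $B_{2R}(e)$. If anything, your bookkeeping is cleaner than the paper's own proof, which contains a couple of harmless typos (writing $g^{-1}$ for $g$ and $[0,R)$ for $[0,2R)$) that do not affect the final bound.
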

\begin{proof} If $P \in X_\Lambda(g, \lambda)$, then by definition
\[
\lambda = s(P)^{-1}g^{-1}s(gP),
\]
with $\{\|s(P)\|, \|s(gP)\|\} \subset [0, 2R)$. Thus a simple application of the triangle inequality yields
\begin{eqnarray*}
\|\lambda\| &=& d(e, s(P)^{-1}g^{-1}s(gP)) \quad \geq \quad  d(e, s(P)^{-1}g^{-1})-  d(s(P)^{-1}g^{-1}s(gP),  s(P)^{-1}g^{-1})\\
&=&  d(s(P), g^{-1}) - d(s(gP), e) \quad \geq \quad d(g^{-1}, e) - d(e, s(P)) - d(e, s(gP))\\
&>& \|g\| -4R,
\end{eqnarray*}
and a similary argument yields the upper bound.
\end{proof}
\subsection{Approximate lattices from model sets}
An important class of approximate lattices is given by symmetric model sets \cite{Meyerbook, BHP1, BHP2} in the sense of Meyer. These examples will play an important role in the sequel, hence we briefly recall their definition and basic properties.
\begin{definition} A \emph{cut-and-project-scheme} is a triple $(G, H, \Gamma)$ where $G$ and $H$ are lcsc groups and $\Gamma < G \times H$ is a lattice which projects injectively to $G$ and densely to $H$. A cut-and-project scheme is called \emph{uniform} if $\Gamma$ is moreover a uniform lattice.
\end{definition}
\begin{remark}\label{ModelSetConventions}
In the sequel when given a \emph{uniform} cut-and-project scheme $(G, H, \Gamma)$ we will always use the following notations and conventions: Firstly, we denote by $\pi_G$, $\pi_H$ the coordinate projections of $G \times H$ and set $\Gamma_G := \pi_G(\Gamma)$ and $\Gamma_H := \pi_H(\Gamma)$. We then define a map $\tau: \Gamma_G \to H$ as $\tau := \pi_H \circ (\pi_G|_\Gamma)^{-1}$. Note that the image of $\tau$ is precisely $\Gamma_H$; in the abelian case this map is sometimes called the ``$*$-map''. Secondly, we denote by $Y := Y(G, H, \Gamma) := (G\times H)/\Gamma$ the associated compact homogeneous space and by $\pi_Y: G\times H \to Y$ the canonical projection. Inside $G \times H$ we can choose a compact subset $\mathcal F$ with dense interior $\mathcal F^o$ such that $\Gamma \mathcal F = G \times H$ and such that $\mathcal F^o$ is mapped homeomorphically to a dense open subset of $Y$. (For example, the Voronoi cell of the identity with respect to any left-admissible metric on $G \times H$ has these properties.) We then set $\mathcal F_G := \pi_G(\mathcal F)$ and $\mathcal F_H := \pi_H(\mathcal F)$ and observe that these are compact subsets of $G$ and $H$ respectively. Thirdly, we choose Haar measures $m_G$ and $m_H$ on $G$ respectively $H$ in such a way that $m_G \otimes m_H(\mathcal F) = 1$.
Then the unique invariant probability measure $m_Y$ on $Y$ is given in terms of the projection $\pi: G\times H \to Y$ by
\[
m_Y(f) = m_G \otimes m_H((\pi \circ f) \cdot \chi_{\mathcal F}) \quad (f \in C_c(Y)).
\] 
Finally, we will always choose a bounded Borel section $s: Y \to G \times H$, $x \mapsto (s_G(x), s_H(x))$ with values in $\mathcal F$ such that $\sigma(\pi(g,h)) = (g,h)$ for all $(g,h) \in \mathcal F^o$. Then, by construction,
\[
s_* m_Y = \chi_{\mathcal F} \cdot m_G \otimes m_H
\]
\end{remark}
As a consequence of our special choice of section we obtain:
\begin{lemma}\label{FMeasures} In the notation of Remark \ref{ModelSetConventions}, let $\mu_G = (s_G)_*m_Y$ and $\mu_H := (s_H)_*m_Y$. Then there exist bounded measurable functions $\rho_G \in L^\infty(\mathcal F_G)$ and $\rho_H \in L^\infty(\mathcal F_H)$ such that
\[
\mu_G = \rho_G \; m_G \quad \text{and} \quad \mu_H = \rho_H \; m_H.
\]
\end{lemma}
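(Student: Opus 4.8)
The plan is to read off $\mu_G$ and $\mu_H$ directly from the explicit formula $s_* m_Y = \chi_{\mathcal F}\cdot (m_G\otimes m_H)$ recorded at the end of Remark \ref{ModelSetConventions}, applying Tonelli's theorem to integrate out one coordinate. Since $s_G=\pi_G\circ s$ and $s_H=\pi_H\circ s$, functoriality of the pushforward gives $\mu_G=(\pi_G)_*(s_*m_Y)=(\pi_G)_*\big(\chi_{\mathcal F}\cdot (m_G\otimes m_H)\big)$ and likewise $\mu_H=(\pi_H)_*\big(\chi_{\mathcal F}\cdot (m_G\otimes m_H)\big)$, so everything reduces to understanding these two marginals of the finite (in fact probability) measure $\chi_{\mathcal F}\cdot(m_G\otimes m_H)$.

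First I would unwind $\mu_G$. For any nonnegative Borel function $f$ on $G$ (e.g.\ $f\in C_c(G)$), Tonelli's theorem — applicable since $\mathcal F$ is compact, hence Borel, so $\chi_{\mathcal F}$ is a nonnegative Borel function — yields
\[
\mu_G(f)=\int_{G\times H} f(g)\,\chi_{\mathcal F}(g,h)\,d(m_G\otimes m_H)(g,h)=\int_G f(g)\Big(\int_H \chi_{\mathcal F}(g,h)\,dm_H(h)\Big)dm_G(g).
\]
Setting $\rho_G(g):=\int_H \chi_{\mathcal F}(g,h)\,dm_H(h)=m_H\big(\{h\in H: (g,h)\in\mathcal F\}\big)$, Tonelli also guarantees that $\rho_G$ is Borel measurable, and the display above is exactly the identity $\mu_G=\rho_G\,m_G$. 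The analogous computation integrating out the $G$-variable produces $\rho_H(h):=m_G\big(\{g\in G:(g,h)\in\mathcal F\}\big)$ with $\mu_H=\rho_H\,m_H$.

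It remains to check that $\rho_G\in L^\infty(\mathcal F_G)$ and $\rho_H\in L^\infty(\mathcal F_H)$, and this is the only point with any content: it is where compactness of $\mathcal F$ is used. For each $g$ the slice $\mathcal F^{(g)}:=\{h\in H:(g,h)\in\mathcal F\}$ is contained in $\pi_H(\mathcal F)=\mathcal F_H$, which is compact and hence of finite Haar measure, so $0\le \rho_G(g)=m_H(\mathcal F^{(g)})\le m_H(\mathcal F_H)<\infty$; moreover $\mathcal F^{(g)}=\emptyset$ unless $g\in\pi_G(\mathcal F)=\mathcal F_G$, so $\rho_G$ vanishes off $\mathcal F_G$, giving $\rho_G\in L^\infty(\mathcal F_G)$. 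The same argument, with the roles of $G$ and $H$ exchanged, gives $0\le\rho_H\le m_G(\mathcal F_G)<\infty$ with $\rho_H$ supported in $\mathcal F_H$. I do not expect any genuine obstacle here: once the formula $s_*m_Y=\chi_{\mathcal F}\cdot(m_G\otimes m_H)$ is available, the lemma is Tonelli together with the observation that all slices of the compact set $\mathcal F$ have uniformly bounded Haar measure.
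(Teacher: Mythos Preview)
Your proof is correct and follows essentially the same approach as the paper: define $\rho_G(g)=m_H(\{h:(g,h)\in\mathcal F\})$ via Tonelli applied to $s_*m_Y=\chi_{\mathcal F}\cdot(m_G\otimes m_H)$, then bound $\rho_G$ by $m_H(\mathcal F_H)$ and observe it vanishes off $\mathcal F_G$. The paper's proof is just a terser version of yours, omitting the explicit mention of Tonelli and the functoriality of pushforward that you spell out.
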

\begin{proof} Given $g \in G$, write $\mathcal F_g := \{h \in H \mid \mathcal (g,h) \in \mathcal F\} \subset \mathcal F_H$. Then $\mu_G = \rho_G \; m_G$, where $\rho_G(g) = m_H(\mathcal F_g)$. By definition we have $\rho_G(g) = 0$ unless $g \in \mathcal F_G$, and since $\rho_G(g) \leq m_H(\mathcal F_H)$ the function $\rho_G$ is bounded. The argument for $\mu_H$ follows by reversing the roles of $G$ and $H$.
\end{proof}
\begin{definition} Let  $(G, H, \Gamma)$ be a cut-and-project scheme. Given a compact subset $W \subset H$, the subset
\[
\Lambda(G, H, \Gamma, W) := \pi_G(\Gamma \cap (G \times W)) \subset G
\]
is called a \emph{weak model set} and $W$ is called its \emph{window}. 
\end{definition}
In terms of the map $\tau: \Gamma_G \to H$ from Remark \ref{ModelSetConventions} we have $\Lambda(G, H, \Gamma, W) = \tau^{-1}(W)$.
\begin{definition} A weak model set $\Lambda$ is called a \emph{model set} if its window has non-empty interior. It is called \emph{uniform} if the underlying cut-and-project scheme is uniform and \emph{symmetric} if it satisfies $\Lambda = \Lambda^{-1}$.
\end{definition}
The relation to approximate lattices is given by the following result.
\begin{proposition}[{\cite[Prop. 2.13]{BH}}] Every symmetric uniform model set in $G$ which contains the identity is a uniform approximate lattice in $G$. \qed
\end{proposition}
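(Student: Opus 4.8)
The plan is to verify separately that $(\Lambda,\Lambda^\infty)$, with $\Lambda:=\Lambda(G,H,\Gamma,W)=\tau^{-1}(W)$ and $\Lambda^\infty:=\langle\Lambda\rangle$, is an approximate group, and that $\Lambda$ is a Delone set in $G$. Everything rests on the elementary calculus of model sets: for a window $V\subseteq H$ write $\Lambda(V):=\pi_G(\Gamma\cap(G\times V))$; since $\Gamma$ is a group and $\pi_G|_\Gamma$ is injective one has $\Lambda(V_1)\,\Lambda(V_2)\subseteq\Lambda(V_1V_2)$, and for every $U\subseteq G$ the map $\pi_G$ restricts to a bijection between $\Gamma\cap(U\times V)$ and $\Lambda(V)\cap U$. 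Axiom (AG1) is the standing hypothesis on $\Lambda$ and (AG3) holds by definition of $\Lambda^\infty$. Fix a left-admissible metric $d$ on $G$ and write $\|g\|:=d(g,e)$.

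The single topological input I would isolate uses both that $\Gamma_H:=\pi_H(\Gamma)$ is dense in $H$ and that $W$ has non-empty interior $W^o$: \emph{for every compact $C\subseteq H$ there exist $\gamma_1,\dots,\gamma_n\in\Gamma$ with $C\subseteq\bigcup_i\pi_H(\gamma_i)W^o$.} Indeed, each $h\in C$ lies in some $\pi_H(\gamma)W^o$ because $h(W^o)^{-1}$ is open, non-empty, and meets the dense set $\Gamma_H$, and a finite subcover exists by compactness of $C$. For axiom (AG2) I apply this with $C:=W^2$ (compact, being the image of $W\times W$ under multiplication): if $\lambda\in\Lambda^2\subseteq\Lambda(W^2)$, write $\lambda=\pi_G(\gamma)$ with $\pi_H(\gamma)\in W^2$ and pick $i$ with $\pi_H(\gamma)\in\pi_H(\gamma_i)W^o$; then $\gamma_i^{-1}\gamma\in\Gamma\cap(G\times W)$, so $\pi_G(\gamma_i)^{-1}\lambda\in\Lambda$. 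Hence $\Lambda^2\subseteq E\Lambda$ with $E:=\{\pi_G(\gamma_1),\dots,\pi_G(\gamma_n)\}$ finite; taking inverses and using $\Lambda=\Lambda^{-1}$ gives $\Lambda^2\subseteq\Lambda E^{-1}$, and since only elements of $E^{-1}\cap\Lambda^{-1}\Lambda^2\subseteq\Lambda^\infty$ are ever needed we may take that intersection as $F_\Lambda$, establishing (AG2).

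Uniform discreteness follows at once: for compact $U\subseteq G$ the set $\Lambda(W^2)\cap U$ is in bijection with $\Gamma\cap(U\times W^2)$, which is finite since $\Gamma$ is discrete and $U\times W^2$ compact; thus $\Lambda^2\cap\overline{B_1(e)}$ is finite, so $r:=\inf\{\|\lambda\|:\lambda\in\Lambda^2\setminus\{e\}\}>0$, and as $d$ is left-invariant and $\lambda^{-1}\lambda'\in\Lambda^2$ for all $\lambda,\lambda'\in\Lambda$, distinct points of $\Lambda$ are at distance at least $r$. For relative density I would take a compact $\mathcal F\subseteq G\times H$ with $\Gamma\mathcal F=G\times H$ (for instance the fundamental domain fixed in Remark~\ref{ModelSetConventions}), with compact projections $\mathcal F_G,\mathcal F_H$. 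Given $g\in G$, choose $\gamma\in\Gamma$ with $\gamma^{-1}(g,e)\in\mathcal F$, so $\pi_G(\gamma)^{-1}g\in\mathcal F_G$ and $\pi_H(\gamma)\in\mathcal F_H^{-1}$. Applying the covering statement to the compact set $\mathcal F_H^{-1}$ produces $\delta_1,\dots,\delta_m\in\Gamma$ with $\mathcal F_H^{-1}\subseteq\bigcup_j W^o\pi_H(\delta_j)$; choosing $j$ with $\pi_H(\gamma)\in W^o\pi_H(\delta_j)$ gives $\gamma\delta_j^{-1}\in\Gamma\cap(G\times W)$, hence $\lambda:=\pi_G(\gamma)\pi_G(\delta_j)^{-1}\in\Lambda$ and $\lambda^{-1}g\in\pi_G(\delta_j)\mathcal F_G$. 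Therefore $G=\Lambda\Omega$ with $\Omega:=\bigcup_j\pi_G(\delta_j)\mathcal F_G$ compact, and with $R:=\max\bigl(r+1,\sup_{\omega\in\Omega}\|\omega\|\bigr)$ the identity $d(g,\lambda)=\|\lambda^{-1}g\|$ shows $\Lambda$ is $R$-relatively dense. Together with uniform discreteness, $\Lambda$ is a Delone set, and the proof is complete.

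The only step that is more than bookkeeping is the relative-density argument: it is the one place where cocompactness of $\Gamma$ in $G\times H$ and density of $\pi_H(\Gamma)$ in $H$ are used simultaneously, and the requirement that the finite correction $\{\delta_j\}$ bring us back inside $\Gamma\cap(G\times W)$ is exactly what forces $W^o\neq\emptyset$ --- i.e.\ this is where the model-set (as opposed to weak-model-set) hypothesis enters.
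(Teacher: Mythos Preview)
The paper does not actually prove this proposition: it is stated with a terminal \qed and imported verbatim from \cite[Prop.~2.13]{BH}, so there is no in-paper argument to compare against. Your proof sketch is correct and self-contained. The key steps --- covering the compact sets $W^2$ and $\mathcal F_H^{-1}$ by finitely many $\Gamma_H$-translates of $W^o$ to obtain respectively (AG2) and relative density, and deducing uniform discreteness from discreteness of $\Gamma$ together with injectivity of $\pi_G|_\Gamma$ --- are exactly the standard ones, and you have identified precisely where each hypothesis (uniformity of $\Gamma$, density of $\Gamma_H$, $W^o\neq\emptyset$) is consumed. One cosmetic remark: in the (AG2) step you could note directly that the $\gamma_i$ you produce satisfy $\pi_H(\gamma_i)\in W^2(W^o)^{-1}\subseteq W^3$, so $\pi_G(\gamma_i)\in\Lambda^3$ and hence $F_\Lambda\subseteq\Lambda^3$ as the definition of approximate subgroup in the introduction requires; your argument that $F_\Lambda\subseteq\Lambda^\infty$ is of course sufficient for the proposition as stated.
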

We now provide a condition which ensures that this uniform approximate lattice is strong.
\begin{definition}
Let $(G, H, \Gamma)$ be a cut-and-project scheme and let $W \subset H$ be compact. We say that $W$ is \emph{$\Gamma$-regular} if it is Jordan-measurable with dense interior, aperiodic (i.e., ${\rm Stab}_H(W) = \{e\}$) and satisfies $\partial W \cap \Gamma_H = \emptyset$. In this case the associated model set $\Lambda(G, H, \Gamma, W)$ is called a \emph{regular model set}.
\end{definition}
The following theorem summarizes basic results on \emph{regular} symmetric uniform model sets containing the identity.
\begin{theorem}[{\cite[Thm. 1.1]{BHP1}}]\label{ModelSetBackground}
Let $\Lambda = \Lambda(G, H, \Gamma, W)$ is a regular symmetric uniform model set containing the identity. Then the following hold:
\begin{enumerate}[(i)]
\item  $\Lambda$ is a strong approximate lattice in $G$, and $X_\Lambda$ admits a unique $G$-invariant measure $\nu$.
\item There exists a unique continuous $G$-equivariant surjection $\iota: X_\Lambda \to Y:=(G\times H)/\Gamma$ mapping $\Lambda$ to $(e,e)\Gamma$, which  induces a probability-measure preserving isomorphism
\[
(X_\Lambda, \nu) \to (Y, m_Y)
\]
of measurable $G$-spaces.\qed
\end{enumerate}
\end{theorem}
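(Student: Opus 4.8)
The plan is to build an explicit continuous $G$-factor map $\iota\colon X_\Lambda\to Y$, exhibit an essentially unique measurable section of it, and thereby reduce both assertions to a single unique-ergodicity statement on $Y$. I expect the parametrisation to be routine (it is classical in spirit), and the uniqueness of the invariant measure to be the real obstacle.

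\emph{Step 1 (parametrisation and the sandwich lemma).} For $y=(g,h)\Gamma\in Y$ set $\Lambda_y^{\circ}:=g\,\tau^{-1}(h^{-1}W^{\circ})$ and $\Lambda_y^{-}:=g\,\tau^{-1}(h^{-1}W)$. Since $\tau\colon\Gamma_G\to\Gamma_H$ is a homomorphism, these depend only on the coset $y$; one has $g_0.\Lambda_y^{\circ}=\Lambda_{g_0.y}^{\circ}$, $g_0.\Lambda_y^{-}=\Lambda_{g_0.y}^{-}$, and $\Lambda_{(e,e)\Gamma}^{-}=\Lambda$; moreover each $\Lambda_y^{-}$ lies in a left translate of $\tau^{-1}(W^{2})$, hence is a nonempty uniformly discrete (in particular closed) subset of $G$, because $\Gamma$ is discrete and $\pi_G|_\Gamma$ is injective. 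The core of this step is the claim: \emph{for every $P\in X_\Lambda$ there is a unique $\iota(P)\in Y$ with $\Lambda_{\iota(P)}^{\circ}\subseteq P\subseteq\Lambda_{\iota(P)}^{-}$}. For existence I would write $P=\lim_n g_n\Lambda$ in the compact metrisable Chabauty--Fell topology, pass to a subsequence so that $(g_n,e)\Gamma\to y$ in the compact space $Y$, and choose representatives of these cosets converging to a representative $(g,h)$ of $y$; then $\Lambda_y^{\circ}\subseteq P$ because $h^{-1}W^{\circ}$ is open (so membership persists to the limit, and limits of points of $g_n\Lambda$ lie in $P$), and $P\subseteq\Lambda_y^{-}$ because $\Gamma$ is discrete: a point $p\in P$ is a limit of points $g'_n\mu_n$ with $\mu_n\in\Gamma_G$, the sequence $(\mu_n,\tau(\mu_n))\in\Gamma$ has a convergent subsequence, hence is eventually constant, which forces $p\in\Lambda_y^{-}$.

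\emph{Step 2 (uniqueness of $\iota(P)$, continuity, surjectivity).} For uniqueness, after left-translating one may assume $y_1=(e,w_1)\Gamma$ with $w_1\in W^{\circ}$ and $y_2=(e,w_2)\Gamma$ with $w_2\in W$; from $\Lambda_{y_1}^{\circ}\subseteq\Lambda_{y_2}^{-}$, i.e.\ $\Gamma_H\cap w_1^{-1}W^{\circ}\subseteq w_2^{-1}W$, together with density of $W^{\circ}$ in $W$ and of $\Gamma_H$ in $H$, one gets $w_1^{-1}W\subseteq w_2^{-1}W$, and by symmetry equality, whence $w_2w_1^{-1}\in\Stab_H(W)=\{e\}$ by aperiodicity, so $y_1=y_2$. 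Re-running the two inclusion arguments of Step 1 with $P_n\to P$ in $X_\Lambda$ and a subsequential limit $\iota(P_n)\to y$ yields $y=\iota(P)$, i.e.\ $\iota$ is continuous; it is $G$-equivariant since $\Lambda_{g_0.y}^{\circ}=g_0.\Lambda_y^{\circ}$ and $\iota(\Lambda)=(e,e)\Gamma$; and it is surjective because $\pi_Y(G\times\{e\})$ is dense in $Y$ (as $\Gamma_H$ is dense in $H$), $\iota$ carries the dense orbit $G.\Lambda$ onto it, and $X_\Lambda$ is compact. Uniqueness of a continuous $G$-map sending $\Lambda$ to $(e,e)\Gamma$ is then immediate from density of $G.\Lambda$.

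\emph{Step 3 (the invariant measure and the isomorphism).} Put $Y_{\mathrm{ns}}:=\{(g,h)\Gamma : h\Gamma_H\cap\partial W=\emptyset\}$, a $G$-invariant Borel subset containing the dense orbit $G.(e,e)\Gamma$ (this uses $\partial W\cap\Gamma_H=\emptyset$). Its complement equals $\pi_Y(G\times\partial W\,\Gamma_H)$, which is $m_Y$-null since $\partial W$ is $m_H$-null (Jordan-measurability of $W$) and $\Gamma_H$ is countable; and on $Y_{\mathrm{ns}}$ one has $\Lambda_y^{\circ}=\Lambda_y^{-}=:j(y)$. Approximating $y\in Y_{\mathrm{ns}}$ by the dense orbit and using compactness of $X_\Lambda$, continuity of $\iota$ and the sandwich lemma shows $j(y)\in X_\Lambda$ and $\iota(j(y))=y$; conversely $j(\iota(P))=P$ whenever $\iota(P)\in Y_{\mathrm{ns}}$; also $j$ is Borel (closed graph) and $G$-equivariant. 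Hence $\nu:=j_*m_Y$ is a $G$-invariant probability on $X_\Lambda$ with $\iota_*\nu=m_Y$, and since $\iota^{-1}(Y_{\mathrm{ns}})$ is $\nu$-conull while $\iota$ is injective there with Borel inverse $j$, the map $\iota$ is a measure-space isomorphism from $(X_\Lambda,\nu)$ onto $(Y,m_Y)$. This establishes the existence half of (i) and all of (ii).

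\emph{Step 4 (uniqueness of $\nu$ — the main obstacle).} If $\nu'$ is any $G$-invariant probability on $X_\Lambda$, then $\iota_*\nu'$ is $G$-invariant on $Y$, and once one knows $\iota_*\nu'=m_Y$ it is concentrated on $Y_{\mathrm{ns}}$, so $\nu'$ is concentrated on $\iota^{-1}(Y_{\mathrm{ns}})$ and therefore $\nu'=j_*m_Y=\nu$. Thus everything comes down to the unique ergodicity of $G\curvearrowright Y=(G\times H)/\Gamma$, which I expect to be the genuinely hard step and in which the remaining content of ``regular'' — Jordan-measurability — is essential. One would first reduce, by disintegrating over the $G$-equivariant map $Y\to G/\overline{\Gamma_G}$ (whose base carries a unique invariant probability, since $\overline{\Gamma_G}$ is cocompact in $G$ and $m_Y$ pushes forward to one), to the case in which $\Gamma$ also projects densely to $G$. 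Then $m_Y$ would be shown to be the only $G$-invariant probability on $Y$ by a uniform equidistribution argument of Schlottmann--Moody type: the empirical pattern frequencies of $\Lambda$ converge uniformly over the hull because they are computed by integrating $\Gamma$-orbits in $Y$ against indicators of finite intersections of translates of $W$, functions that are Riemann-integrable precisely because $m_H(\partial W)=0$, and this forces all ergodic averages of continuous functions on $X_\Lambda$ to the value prescribed by $\nu$. (When $G$ and $H$ are Lie groups one may instead invoke Ratner's measure classification, since the only closed orbit of an intermediate subgroup $G\subseteq L\subseteq G\times H$ carrying finite volume is all of $Y$.) Granting this, $\nu$ is the unique $G$-invariant probability on $X_\Lambda$, completing (i).
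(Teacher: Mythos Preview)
The paper does not prove this theorem at all: the statement ends with a \qed and is simply quoted from \cite{BHP1}. So there is no in-paper argument to compare against; what I can do is assess your sketch against the approach actually taken in \cite{BHP1}.

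Your Steps~1--3 are essentially the construction carried out in \cite{BHP1}: the sandwich $\Lambda_y^{\circ}\subseteq P\subseteq\Lambda_y^{-}$, uniqueness via aperiodicity of $W$, continuity of $\iota$ via Chabauty--Fell convergence, the nonsingular set $Y_{\mathrm{ns}}$, and the Borel section $j$ are exactly the ingredients used there, and your handling of them is correct. One small wrinkle: in Step~2 you normalise so that $w_1\in W^\circ$; this is fine because $\Lambda_{y_1}^\circ$ is nonempty (density of $\Gamma_H$, nonempty $W^\circ$), so you can translate by an element of $\Lambda_{y_1}^\circ\subseteq P$ --- you might make this explicit.

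The one place where your sketch diverges from what is needed, and where it is both incomplete and overcomplicated, is Step~4. You correctly isolate unique ergodicity of $G\curvearrowright Y$ as the crux, but then propose a reduction via $Y\to G/\overline{\Gamma_G}$ followed by a Schlottmann--Moody uniform-distribution argument (or even Ratner). None of this is necessary, and the reduction step is not clearly formulated. The actual argument is elementary: any $G$-invariant probability $\mu$ on $Y$ lifts to a $\Gamma$-right-invariant, $(G\times\{e\})$-left-invariant Radon measure $\tilde\mu$ on $G\times H$; left $G$-invariance forces $\tilde\mu=m_G\otimes\eta$ for some Radon measure $\eta$ on $H$; unimodularity of $G$ (automatic since $G\times H$ carries a lattice) then turns $\Gamma$-right-invariance into right-$\Gamma_H$-invariance of $\eta$; and since $h'\mapsto\int f(hh')\,d\eta(h)$ is continuous for $f\in C_c(H)$ while $\Gamma_H$ is dense, $\eta$ is a right Haar measure. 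Hence $\tilde\mu$ is a multiple of $m_G\otimes m_H$ and $\mu=m_Y$. Note that Jordan-measurability of $W$ plays no role here --- it is used only (as you already did in Step~3) to ensure $m_Y(Y\setminus Y_{\mathrm{ns}})=0$, not for unique ergodicity of $Y$.
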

\begin{remark} Let $\Lambda = \Lambda(G, H, \Gamma, W)$ be as in the theorem. Since the projection of $\Gamma$ onto the first factor is injective, it induces a group isomorphism $\Gamma \cong \Lambda^\infty$. Under this isomorphism the set $\Lambda$ corresponds to the subset $\Gamma_W := \Gamma \cap (G \times W)$ of $\Gamma$. In this sense the approximate group $(\Lambda, \Lambda^\infty)$ is isomorphic to the approximate group $(\Gamma_W, \Gamma)$.
\end{remark}
\begin{definition} A model set $\Lambda = \Lambda(G, H, \Gamma, W)$ is said to have \emph{large window} if $W \supset \mathcal F_H$.
\end{definition}
Note that every (symmetric, regular) model set is contained in a (symmetric, regular) model set with large window. The following technical result will be used to provide an alternative model for cocycle induction for model sets.
\begin{proposition}\label{CompatibleSections}  Let $\Lambda = \Lambda(G, H, \Gamma, W)$ be a uniform model set with large window. Then for any bounded Borel section $s: Y \to G\times H$ as in Remark \ref{ModelSetConventions} there exists a bounded Borel section $\sigma: X_\Lambda \to G$ such that for almost all $P \in X_\Lambda$,
\[
\sigma(P) = \pi_G(s(\iota(P))).
\]
\end{proposition}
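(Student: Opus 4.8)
The plan is to show that the naive candidate $f : X_\Lambda \to G$, $f(P) := \pi_G(s(\iota(P)))$, is $\nu$-almost everywhere a genuine section of the map $P \mapsto P$, and then to repair it on a $\nu$-null set using Proposition \ref{ExSection}. Here $\iota : X_\Lambda \to Y$ is the continuous $G$-equivariant surjection of Theorem \ref{ModelSetBackground} and $\pi = \pi_Y : G \times H \to Y$. The map $f$ is Borel, being a composite of the continuous map $\iota$, the Borel section $s$ and the continuous projection $\pi_G$, and it takes values in the compact set $\mathcal F_G = \pi_G(\mathcal F)$ since $s$ takes values in $\mathcal F$; thus $f$ is already a bounded Borel map. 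The entire proof therefore reduces to the claim that $f(P) \in P$ for $\nu$-almost every $P \in X_\Lambda$.

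To prove the claim I would restrict to the open subset $U := \iota^{-1}(\pi(\mathcal F^o)) \subset X_\Lambda$, which is $\nu$-conull because $\iota_*\nu = m_Y$ and, by the conventions of Remark \ref{ModelSetConventions}, $m_Y(\pi(\mathcal F^o)) = (m_G \otimes m_H)(\mathcal F^o) = 1$ (equivalently, $\partial\mathcal F$ is $(m_G\otimes m_H)$-null for the standard choice of $\mathcal F$). On $\pi(\mathcal F^o)$ the section $s$ coincides with the continuous map $(\pi|_{\mathcal F^o})^{-1}$, so $f|_U$ is continuous; since the incidence set $\{(g,C) \in G \times \mathcal C(G) : g \in C\}$ is closed for the Chabauty--Fell topology, the set $\{P \in U : f(P) \in P\}$ is closed in $U$. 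As $X_\Lambda = \overline{G.\Lambda}$, the orbit $G.\Lambda$ is dense, hence $G.\Lambda \cap U$ is dense in $U$, and it suffices to check $f(P) \in P$ for $P$ in the orbit. For $P = g\Lambda$, $G$-equivariance of $\iota$ together with $\iota(\Lambda) = (e,e)\Gamma$ gives $\iota(P) = (g,e)\Gamma$, so $s(\iota(P)) = (g\gamma_G, \gamma_H)$ for some $\gamma = (\gamma_G,\gamma_H) \in \Gamma$ with $(g\gamma_G,\gamma_H) \in \mathcal F$; then $\gamma_H \in \mathcal F_H \subset W$ because $\Lambda$ has large window, whence $\gamma_G \in \tau^{-1}(W) = \Lambda$ and therefore $f(P) = g\gamma_G \in g\Lambda = P$. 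This establishes the claim, and it is exactly here that the large-window hypothesis enters; it is genuinely needed, since for an arbitrary window the orbit point $\pi_G(s(\iota(g\Lambda)))$ need not lie in $g\Lambda$.

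Granting the claim, I would finish as follows. The set $T := \{P \in X_\Lambda : f(P) \in P\}$ is Borel, being the preimage of the closed incidence set under the Borel map $P \mapsto (f(P), P)$, and it is $\nu$-conull. Fixing a bounded Borel section $s_0 : X_\Lambda \to \overline{B_{2R}(e)}$ as in Proposition \ref{ExSection}, set $\sigma := f$ on $T$ and $\sigma := s_0$ on $X_\Lambda \setminus T$. Then $\sigma$ is Borel, it is a section since $\sigma(P) \in P$ for every $P$, it is bounded since its image lies in the precompact set $\mathcal F_G \cup \overline{B_{2R}(e)}$, and $\sigma(P) = f(P) = \pi_G(s(\iota(P)))$ for every $P \in T$, i.e.\ for $\nu$-almost every $P$, which is the assertion. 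I expect the only real obstacle to be the almost-everywhere claim $f(P) \in P$: it cannot be proved pointwise on all of $X_\Lambda$, since boundary points of the hull may violate it, which is why one is forced to argue on the conull open set $U$ by combining density of $G.\Lambda$ with continuity of $s$ on $\mathcal F^o$. An alternative to this step would be to invoke the explicit ``window-shift'' description of $\iota^{-1}$ from \cite{BHP1}, but the density argument seems cleaner.
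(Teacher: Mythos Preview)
Your proof is correct, but it follows a genuinely different route from the paper's. The paper proceeds via Lemma~\ref{ModelSetX0Y0}, which supplies an explicit almost-everywhere inverse $\iota_0^{-1}((g,h)\Gamma) = g\,\tau^{-1}(h^{-1}W)$ on a conull $G$-invariant set $X_0$; then for $P\in X_0$ with $(g,h):=s(\iota(P))\in\mathcal F$ one reads off $P = g\,\tau^{-1}(h^{-1}W)$, and the large-window hypothesis $h\in\mathcal F_H\subset W$ gives $e\in\tau^{-1}(h^{-1}W)$, hence $g\in P$. You bypass this structural lemma entirely: you verify $f(P)\in P$ directly on the dense orbit $G.\Lambda$ (using the same observation that $\gamma_H\in\mathcal F_H\subset W$ forces $\gamma_G\in\Lambda$), and then extend to the open conull set $U=\iota^{-1}(\pi(\mathcal F^o))$ by continuity of $f|_U$ together with closedness of the Chabauty--Fell incidence relation. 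Your approach is more self-contained---it avoids importing the window-shift parametrization from \cite{BHP1}---at the cost of a small topological closure step; the paper's approach is purely pointwise on $X_0$ once that lemma is in hand. One minor dependence worth noting: your argument that $U$ is $\nu$-conull uses $(m_G\otimes m_H)(\partial\mathcal F)=0$, which is implicit in the standing convention $s_*m_Y=\chi_{\mathcal F}\,(m_G\otimes m_H)$ of Remark~\ref{ModelSetConventions} (e.g.\ for a Voronoi domain), whereas the paper's route through $X_0$ does not need this particular fact.
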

For the proof of the proposition we need an explicit almost everywhere defined inverse of the map $\iota: X_\Lambda \to Y$ from Theorem \ref{ModelSetBackground}.
\begin{lemma}\label{ModelSetX0Y0} Let $\Lambda = \Lambda(G, H, \Gamma, W)$ be a symmetric regular uniform model set and let $\iota: X_\Lambda \to Y$ be as in Theorem \ref{ModelSetBackground}. Then there exists a $\nu$-conull $G$-invariant Borel set $X_0 \subset X_\Lambda$ and a Haar-conull $G$-invariant Borel set $Y_0 \subset Y$ such that $\iota$ restricts to a $G$-equivariant measurable bijection $\iota_0: X_0 \to Y_0$ with inverse given by $\iota_0^{-1}((g,h)\Gamma) = g\tau^{-1}(h^{-1}W)$.
\end{lemma}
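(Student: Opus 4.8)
The plan is to write down the inverse map explicitly, as the ``descent of the window‑shifting map'', prove that composing it with $\iota$ gives the identity off a null set, and then promote this to an honest bijection of $G$‑invariant conull Borel sets by invoking that $\nu$ is the \emph{unique} $G$‑invariant probability measure on $X_\Lambda$.

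\emph{Construction of the candidate inverse.} Define $\bar\beta\colon Y\to\cC(G)$ by $\bar\beta((g,h)\Gamma):=g\,\tau^{-1}(h^{-1}W)$. The first task is to check this is well defined and $G$‑equivariant; both reduce to the single identity $\gamma_G^{-1}\,\tau^{-1}(V)=\tau^{-1}(\gamma_H^{-1}V)$ for $\gamma\in\Gamma$ and $V\subseteq H$, which in turn is immediate from injectivity of $\pi_G|_\Gamma$ (it forces $\tau(\gamma_G^{-1}x)=\gamma_H^{-1}\tau(x)$). In particular $\gamma_G^{-1}\Lambda=\tau^{-1}(\gamma_H^{-1}W)\in G.\Lambda$ for all $\gamma\in\Gamma$, a fact used repeatedly below.

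\emph{A Chabauty--Fell continuity lemma.} Set $N:=(\partial W)\Gamma_H\subseteq H$. Since $W$ is Jordan measurable, $\partial W$ is Haar null, and since $\Gamma_H$ is countable, $N$ is Haar null; as $N\Gamma_H=N$, the set $Y_0^{(1)}:=\{(g,h)\Gamma : h\notin N\}$ is a well-defined $G$‑invariant conull Borel subset of $Y$. The heart of the matter is the claim that for $h\notin N$ and \emph{any} $\gamma_n\in\Gamma$ with $\pi_H(\gamma_n)\to h$ one has $\gamma_{n,G}^{-1}\Lambda=\tau^{-1}(\pi_H(\gamma_n)^{-1}W)\to\tau^{-1}(h^{-1}W)$ in the Chabauty--Fell topology. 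I would verify the two half-continuity conditions separately. For lower semicontinuity: if $\delta\in\Gamma$ has $\delta_G\in\tau^{-1}(h^{-1}W)$, then $h\delta_H\in W$, and since $h\notin(\partial W)\Gamma_H$ also $h\delta_H\notin\partial W$, hence $h\delta_H\in\mathrm{int}\,W$; thus $\pi_H(\gamma_n)\delta_H\to h\delta_H\in\mathrm{int}\,W$, so eventually $\pi_H(\gamma_n)\delta_H\in W$, i.e.\ $\delta_G\in\tau^{-1}(\pi_H(\gamma_n)^{-1}W)$. For upper semicontinuity: if some compact $K$ met $\tau^{-1}(\pi_H(\gamma_n)^{-1}W)$ for infinitely many $n$, the corresponding points of $\Gamma$ would lie in $\Gamma\cap(K\times C)$ for a fixed compact $C\subseteq H$ containing all the windows $\pi_H(\gamma_n)^{-1}W$; this set is finite by discreteness of $\Gamma$ in $G\times H$, so along a subsequence these points are constant, and passing to the limit (using that $W$ is closed) puts that constant point into $\tau^{-1}(h^{-1}W)\cap K$, a contradiction. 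Since $X_\Lambda$ is closed and $G$‑invariant this gives $\bar\beta(Y_0^{(1)})\subseteq X_\Lambda$, and — choosing the $\gamma_n$ Borel-measurably in $h$ and taking the pointwise limit — also that $\bar\beta|_{Y_0^{(1)}}$ is Borel.

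\emph{Identity off a null set and upgrading.} Using $\gamma_{n,G}^{-1}\Lambda\to\tau^{-1}(h^{-1}W)$, continuity and $G$‑equivariance of $\iota$, and $\iota(\Lambda)=(e,e)\Gamma$ (Theorem \ref{ModelSetBackground}),
\[
\iota\big(\bar\beta((g,h)\Gamma)\big)=\lim_n\iota\big(g\gamma_{n,G}^{-1}\Lambda\big)=\lim_n\pi_Y\big(g\gamma_{n,G}^{-1},e\big)=\lim_n\pi_Y\big(g,\pi_H(\gamma_n)\big)=(g,h)\Gamma,
\]
where the third equality uses $(g\gamma_{n,G}^{-1},e)\gamma_n=(g,\pi_H(\gamma_n))$. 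Hence $\iota\circ\bar\beta=\mathrm{id}$ on $Y_0^{(1)}$. Now, since $m_Y$ is $G$‑invariant and $\bar\beta$ is $G$‑equivariant and Borel, $\bar\beta_*m_Y$ is a $G$‑invariant probability measure on $X_\Lambda$, hence $\bar\beta_*m_Y=\nu$ by the uniqueness in Theorem \ref{ModelSetBackground}(i). Therefore $(\bar\beta\circ\iota)_*\nu=\bar\beta_*m_Y=\nu$ while $\iota\circ(\bar\beta\circ\iota)=\iota$ on $\iota^{-1}(Y_0^{(1)})$; since $\iota$ is $\nu$‑essentially injective (it is an isomorphism of measure $G$‑spaces) and $\nu$ has no mass off $\iota^{-1}(Y_0^{(1)})$, it follows that $\bar\beta\circ\iota=\mathrm{id}_{X_\Lambda}$ $\nu$‑a.e. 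Put $X_0:=\{P\in\iota^{-1}(Y_0^{(1)}) : \bar\beta(\iota(P))=P\}$ and $Y_0:=\iota(X_0)$: then $X_0$ is Borel, $G$‑invariant and $\nu$‑conull; $\iota$ is injective on $X_0$ (if $\iota(P)=\iota(P')$ with $P,P'\in X_0$ then $P=\bar\beta(\iota(P))=\bar\beta(\iota(P'))=P'$); $Y_0$ is Borel by the Lusin--Souslin theorem, $G$‑invariant, and conull since $m_Y(Y_0)=\nu(\iota^{-1}(Y_0))\ge\nu(X_0)=1$. Thus $\iota_0:=\iota|_{X_0}\colon X_0\to Y_0$ is a $G$‑equivariant measurable bijection, and for $(g,h)\Gamma\in Y_0$ its inverse is $\iota_0^{-1}((g,h)\Gamma)=\bar\beta((g,h)\Gamma)=g\,\tau^{-1}(h^{-1}W)$, as claimed.

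The main obstacle is the Chabauty--Fell convergence in the second step: this is where the regularity of the window genuinely enters — Jordan measurability to make $N$ null, together with the complementary observation that a point off $(\partial W)\Gamma_H$ is carried into the \emph{interior} of $W$ by every $\Gamma_H$‑translate — and where one has to play it against discreteness of $\Gamma$ in $G\times H$. Everything else is either a short computation with the $\ast$‑map $\tau$ or a formal consequence of the uniqueness of the invariant measure on $X_\Lambda$.
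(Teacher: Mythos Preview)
Your proof is correct and takes a genuinely different route from the paper's. The paper's argument is essentially a citation: it invokes \cite[Thm.~3.1 and Thm.~3.4]{BHP1} for the existence of the conull sets $X_0$, $Y_0$ and the bijection, and then derives the explicit formula by quoting further structural results from \cite{BHP1} about the canonical transversal $\mathcal T$ (namely that every $P\in X_0$ can be translated into $\mathcal T$ by any of its own elements, and that on $\mathcal T$ the map $\iota$ and its inverse take a specific form). Your argument, by contrast, is self-contained relative to Theorem~\ref{ModelSetBackground}: you write down the candidate inverse $\bar\beta$, prove the key Chabauty--Fell convergence directly (this is where Jordan measurability of $W$ and discreteness of $\Gamma$ in $G\times H$ are genuinely used), verify $\iota\circ\bar\beta=\mathrm{id}$ on an explicit conull set by a limit computation, and then exploit the \emph{uniqueness} of $\nu$ to close the loop and obtain $\bar\beta\circ\iota=\mathrm{id}$ almost everywhere.

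What each approach buys: the paper's proof is short but opaque, deferring all content to the companion paper. Your approach makes the mechanism transparent --- in particular, your Chabauty--Fell lemma actually shows that $h\mapsto\tau^{-1}(h^{-1}W)$ is continuous on $H\setminus N$ (the argument nowhere uses that the approximating sequence lies in $\Gamma_H$), which both clarifies the role of regularity and gives Borel measurability of $\bar\beta$ for free, tightening your slightly hand-wavy ``choosing the $\gamma_n$ Borel-measurably'' step. One small simplification worth noting: once you have $\iota\circ\bar\beta=\mathrm{id}$ on $Y_0^{(1)}$, the map $\bar\beta$ is already injective there (it has a left inverse), so you could take $X_0:=\bar\beta(Y_0^{(1)})$ and $Y_0:=Y_0^{(1)}$ directly; Lusin--Souslin gives $X_0$ Borel, and $\nu$-conullity of $X_0$ still uses $\bar\beta_*m_Y=\nu$, hence unique ergodicity, exactly as in your version.
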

\begin{proof} The first statement is contained in \cite[Thm. 3.1 and Thm. 3.4]{BHP1}. We first claim that for every $P \in X_0$ and every $g_P \in P$ there exists $h_P \in H$ such that
\begin{equation}\label{Transversal1}
\iota_0(P) = (g_P, h_P)\Gamma \quad \text{and} \quad P = g_P.\tau^{-1}(h_P^{-1}W).
\end{equation}
To prove the claim, we fix $P \in X_0$ and $g_P \in P$. It is established in \cite[Prop. 2.10 and Sec. 2.5]{BHP1} that  $g_P^{-1}P$ is contained in the intersection of $X_0$ with the so-called canonical transversal $\mathcal T$. It then follows from \cite[Thm. 3.1 (ii) and (iii)]{BHP1} that there exists $h_P \in H$ such that $\iota(g_P^{-1}P) = (e, h_P)\Gamma$ and $\iota^{-1}((e, h_P)\Gamma) = \tau^{-1}(h_P^{-1}W)$ for some $h_P \in H$. By $G$-equivariance of $\iota_0$ and $\iota_0^{-1}$ we have
\[
\iota_0(P) = \iota_0(g_Pg_P^{-1}P) = g_P \iota_0(g_P^{-1}P) = g_P(e, h_P) = (g_P, h_P)\Gamma
\]
and $P = \iota_0^{-1}((g_p, h_P) \Gamma) = g_p.\iota_0^{-1}((e, h_P)\Gamma) = g_P.\tau^{-1}(h_P^{-1}W)$. This finishes the proof of \eqref{Transversal1}.\\

Now let $(g,h)\Gamma \in Y_0$ and set $P := \iota_0^{-1}((g,h)\Gamma) \in X$. Choose $g_P \in P$ and let $h_P \in H$ such that \eqref{Transversal1} holds. Then $(g,h)\Gamma = \iota_0(P) = (g_P, h_P)\Gamma$, hence there exists $(\gamma, \gamma^*) \in \Gamma$ such that $(g,h) = (g_P\gamma, h_P\gamma^*)$, and hence
\[
P = g_P.\tau^{-1}(h_P^{-1}W) = g_P \, \pi_G((\gamma, \gamma^*)(\Gamma \cap (G \times (\gamma^*)^{-1}h_P^{-1}W))) = g\tau^{-1}(h^{-1}W).\qedhere
\]
\end{proof}

\begin{proof}[Proof of Proposition \ref{CompatibleSections}] Let $X_0$ and $Y_0$ as in Lemma \ref{ModelSetX0Y0} and let $P \in X_0$. Abbreviate $(g,h) := s(\iota(P))$. By construction, $(g,h) \in \mathcal F$ and hence $h \in \mathcal F_H$. By Lemma \ref{ModelSetX0Y0} we have 
\[P = g \; \pi_G(\Gamma \cap (G \times h^{-1}W))\]
Since $h \in \mathcal F_H \subset W$ the set $G \times h^{-1}W$ contains the identity, hence $g \in P$. Thus 
\begin{equation}\label{sigmavss}
g = \pi_G(s(\iota(P))) \in P \quad \text{for all }P \in X_0.
\end{equation}
We now fix an arbitrary bounded Borel section $\sigma': X_\Lambda \to G$ and define
\[
\sigma: X_\Lambda \to G, \quad \sigma(P) = \left\{\begin{array}{ll}  \pi_G(s(\iota(P))), & \text{if }P \in X_0,\\
\sigma'(P) & \text{if }P \not \in X_0.
 \end{array}\right.
\]
It follows from \eqref{sigmavss} and the fact that $\sigma'$ is a section, that $\sigma$ is a section. Moreover, $\sigma$ is Borel, since $s$ and $\sigma'$ are and since $X_0$ and its complement are Borel sets. It is bounded, since $s$ and $\sigma'$ is bounded. Finally, $\sigma(P) = \pi_G(s(\iota(P)))$ for all $P \in X_0$, hence for almost all $P \in X_\Lambda$.
\end{proof}

\section{Affine isometric actions and $L^p$-induction}
\label{affineisom}

\subsection{Affine isometric actions on Banach spaces and (weak) quasi-cocycles}

In the sequel, all Banach spaces are assumed to be defined over the field of real numbers. Given a Banach space $(E, \|\cdot\|)$ we denote by ${\rm Is}(E)$ the corresponding isometry group. By the Mazur-Ulam theorem we have ${\rm Is}(E) = O(E) \ltimes E$, where $O(E)$ denotes the orthogonal group of $(E, \|\cdot\|)$ (i.e., the group of linear isometries), and $E$ acts on itself by translations. If $\Gamma$ is a group, then a homomorphism $\rho: \Gamma \to {\rm Is}(E)$ is called an \emph{affine isometric action} of $\Gamma$ on $E$. Every such action is of the form 
\[\rho(g).v = \pi(g).v + b(g), \quad (g \in \Gamma, v \in E)\]
 where $\pi: \Gamma \to O(E)$ is a homomorphism, and $b: \Gamma \to E$ is a $1$-cocycle with respect to $\pi$ in the sense that
\begin{equation}\label{1cocycle}
b(gh) = b(g) + \pi(g)b(h) \quad (g,h \in \Gamma).
\end{equation}
We then write $\rho = (\pi, b)$ and refer to $\pi$ and $b$ as the \emph{linear part} of $\rho$, respectively the \emph{cocycle} defined by $\rho$. For later use we record that if $b$ is a cocycle then $b(e) = 0$ (since $b(e) = b(e) + \pi(e) b(e) = 2b(e)$) and hence for all $g \in G$ we have
\[
 0 = b(gg^{-1}) = b(g) + \pi(g)b(g^{-1}),
\] 
i.e.,
\begin{equation}\label{bInverse}
b(g^{-1}) = -\pi(g)^{-1}b(g).
\end{equation}
In the remainder of this article we will only consider uniformly convex Banach spaces. If $G$ is a topological group and $\pi: G \to O(E)$ is a homomorphism, then the action map $G \times E \to E$ is jointly continuous if and only if $\pi$ is weakly continuous (equivalently, strongly continuous) if and only if the orbit maps of $\pi$ are continuous (as follows e.g. \cite[Lemma 2.4]{BFGM} since $E$ is uniformly convex and hence superreflexive). In this case we call $\pi$ a \emph{continuous orthogonal representation}.
\begin{definition} An affine isometric action $\rho = (\pi, b)$ of a topological group $g$ on a uniformly convex Banach space $E$ is called \emph{continuous} if $\pi$ is a continuous orthogonal representation and $b$ is a continuous cocycle.
\end{definition}
From now on let $\mathcal E$ be a class of uniformly convex separable Banach spaces. In particular, given $1 < p < \infty$ we will be interested in the class of $L^p$-spaces, i.e.\ Banach spaces $E$ which are isometrically isomorphic to $L^p(Y, \mu)$ for some $\sigma$-finite Borel measure $\mu$ on a standard Borel space $Y$. We insist on separability in order to ensure that ${\rm Is}(E)$ and $O(E)$ are Polish groups with the topology of pointwise convergence (a.k.a. strong operator topology) for every $E \in \mathcal E$.
\begin{definition}
If $G$ is a lcsc group and $\rho = (\pi, b)$ is a continuous affine isometric action on some $E \in \mathcal E$, then we refer to $\rho$ as an \emph{affine $\mathcal E$-action of $G$} and to $b$ as an \emph{$\mathcal E$-cocycle} on $G$.
\end{definition}
If $\mathcal E$ is the class of all $L^p$-spaces we obtain in particular the notion of an affine $L^p$-action and $L^p$-cocycle. Note that our affine actions will always implicitly assumed to be continuous. 

For the class $\mathcal E = L^2$ of separable Hilbert spaces the following weakenings of the notion of an $L^2$-cocycle were introduced by Ozawa in his work on Property (TTT), respectively by Burger--Monod in their work on Property (TT). Here, given $E \in \mathcal E$, the group $O(E)$ is equipped with the Borel structure associated with the strong (equivalently, the weak) operator topology.
\begin{definition} Let $G$ be a lcsc group and $E \in \mathcal E$. Let $\pi: G \to O(E)$ be a Borel map (not necessarily a homomorphism) and let $b: G \to E$ be a Borel map which is locally bounded in the sense that for every compact subset $K \subset E$,
\[
\sup_{g \in K}\|b(g)\| < \infty.
\]
\begin{enumerate}[(i)]
\item We say that $b$ is a \emph{weak $\mathcal E$-quasi-cocycle} (wq-$\mathcal E$-cocycle for short) with respect to $\pi$ if
\begin{equation}
\label{def_Db}
D(b) := \sup_{g_1, g_2 \in G} \|b(g_1g_2)-b(g_1) - \pi(g_1)b(g_2)\| < \infty.
\end{equation}
Then the pair $(\pi, b)$ is called a \emph{wq-$\mathcal E$-pair} for $G$, and $E$ is called the \emph{underlying space}.
\item If $(\pi, b)$ is a wq-$\mathcal E$-pair and $\pi$ is moreover a homomorphism, then $b$ is called a \emph{quasi-cocycle} and $(\pi, b)$ is then called a \emph{quasi-$\mathcal E$-pair}.
\end{enumerate}
In particular, we obtain the notions of an $L^p$-quasi-cocycle and wq-$L^p$-cocycle.
\end{definition} 
By definition, every $L^p$-cocycle is an $L^p$-quasi-cocyle, and every $L^p$-quasi-cocycle is a wq-$L^p$-cocycle. In particular, all results concerning wq-$L^p$-cocycles below apply to $L^p$-quasi-cocycles and $L^p$-cocycles.

\begin{remark} Our definition of a quasi-cocycle follows Ozawa \cite{Oz11}. Burger and Monod \cite{BM1} require in addition that $b$ be continuous. However, it is well-known\footnote{Since we could not locate a precise reference in the literature, we include a proof in Appendix \ref{AppendixCocycle}.} that every Borel quasi-cocycle is at uniformly bounded distance from a continuous quasi-cocycle, hence this difference in definition does not affect the notion of Property (FF$\mathcal E$) defined below.
\end{remark}

\subsection{$L^p$-induction for strong uniform approximate lattices}
For the rest of this section we consider the following setting: Let $G$ be a lcsc group, let $\Lambda \subset G$ be a strong uniform approximate lattice with enveloping group $\Lambda^\infty$, and denote by $X = X_\Lambda$ the hull of $\Lambda$. We fix a left-admissible metric $d$ on $G$ and given $g\in G$ we set $\|g\| := d(g,e)$. Let $R > r > 0$ be Delone parameters of $\Lambda$ with respect to $d$. Using Proposition \ref{ExSection} we choose a Borel section $s: X_\Lambda \to G$ which takes values in $B_{2R}(e)$ and denote by $\beta = \beta_s: G \times X_\Lambda \to \Lambda^2$ the associated cocycle.

If $\Lambda$ happens to be a uniform \emph{lattice} in $G$, then every affine $L^p$-action of $\Lambda$ induces an affine $L^p$-action of $G$ (see e.g. \cite{Sh}), and we would like to generalize this construction to the case at hand. In fact, it is natural to discuss induction in the wider context of wq-$L^p$-cocycles. In the present setting the situation is complicated by the fact that there may be more than one $G$-invariant measure on $X$. As we will see in Example \ref{MeasureDependence} below, different choices of measures on $X$ will lead to substantially different induction procedures. 

We now fix a $G$-invariant measure $\nu$ on $X$ and proceed to define an induction operation depending on $\nu$. For this let $Y$ be a standard Borel space, $\mu$ a $\sigma$-finite Borel measure on $Y$ and $E := L^p(Y, \mu)$ for some $p \in (1, \infty)$. We then denote by 
\[
\widehat{E} := L^p(X_\Lambda, \nu; E)
\]
the space of equivalence classes of Bochner $p$-integrable $E$-valued functions on $(X_\Lambda, \nu)$. Explicitly, a function $f: X_\Lambda \to E$ represents a class in $\widehat{E}$ if and only if the map $\bar{f}: X_\Lambda \times Y \to \C$ given by
\[
\bar f(P, y) :=  f(P)(y)
\]
represents a class in $L^p(X_\Lambda \times Y, \nu\otimes \mu)$. In particular, we see from the isometric isomorphism
\[
\widehat{E} \xrightarrow{\cong} L^p(X_\Lambda \times Y, \nu\otimes \mu), \quad [f] \mapsto [\bar f]
\]
that $\widehat{E}$ is again an $L^p$-space.

\begin{remark} If $E$ is an arbitrary uniformly convex separable Banach space, then the Banach space $\widehat{E} := L^p(X_\Lambda, \nu; E)$ of equivalence classes of Bochner $p$-integrable $E$-valued functions on $(X_\Lambda, \nu)$ can always be defined. We say that a class $\mathcal E$ of uniformly convex separable Banach spaces is \emph{$L^p$-closed} if $\widehat{E} \in \mathcal E$ for every $E \in \mathcal E$. By the previous remark, the class of $L^p$-spaces itself is $L^p$-closed. While we are mainly interested in the class of $L^p$-spaces here, most of what we say below can be established for an arbitrary $L^p$-closed class $\mathcal E$ of uniformly convex separable Banach spaces.
\end{remark}

\begin{proposition}\label{InductionWellDefined} Let $1 < p < \infty$ and let $\mathcal E$ be an $L^p$-closed class of uniformly convex separable Banach spaces (for example, the class of $L^p$-spaces). Let $(\pi, b)$ be a wq-$\mathcal E$-pair for $\Lambda^\infty$ with underlying space $E \in \mathcal E$ and let $\widehat{E} := L^p(X_\Lambda, \nu; E)$.
\begin{enumerate}[(i)]
\item There are well-defined Borel maps 
\begin{eqnarray*}
\widehat{\pi}: G \to U(\widehat{E}), & \widehat{\pi}(g)f(P) &:= \quad  \pi(\beta(g^{-1}, P)^{-1})f(g^{-1}P),\\
\widehat{b}: G \to \widehat{E}, & \widehat{b}(g) &:= \quad b (\beta(g^{-1}, P)^{-1}),
\end{eqnarray*}
and $(\widehat{\pi}, \widehat{b})$ is a wq-$\mathcal E$-pair for $G$ with underlying space $\widehat{E} \in \mathcal E$ satisfying $D(\widehat{b}) \leq D(b)$.
\item If $(\pi, b)$ is a quasi-$\mathcal E$-pair, then so is $(\widehat{\pi}, \widehat{b})$.
\item If $(\pi, b)$ is an affine $\mathcal E$-action, then so is $(\widehat{\pi}, \widehat{b})$.
\end{enumerate}
\end{proposition}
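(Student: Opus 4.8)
The plan is to derive all three parts from the cocycle identity \eqref{betaCocycle} for $\beta=\beta_s$ together with the $G$-invariance of $\nu$ and the fact (Lemma~\ref{LemmabetaCocycle}) that $\beta$ is locally finite-valued; the latter is what makes the measurability statements work and will be the main technical obstacle, while the algebraic identities underlying (i)--(iii) are short computations.

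\emph{Well-definedness and the isometry property.} First I would fix $g\in G$ and check that the formula for $\widehat\pi(g)$ defines a surjective linear isometry of $\widehat E$. Since $\pi(\beta(g^{-1},P)^{-1})\in O(E)$ preserves the $E$-norm pointwise, $\|(\widehat\pi(g)f)(P)\|_E=\|f(g^{-1}P)\|_E$ for $\nu$-a.e.\ $P$; raising to the $p$-th power, integrating over $X$, and substituting $Q=g^{-1}P$ (legitimate because $\nu$ is $G$-invariant) gives $\|\widehat\pi(g)f\|_{\widehat E}=\|f\|_{\widehat E}$. The same $G$-invariance shows $\widehat\pi(g)f$ depends only on the $\nu$-class of $f$, so $\widehat\pi(g)$ is well defined on $\widehat E$, and it is onto because $h\mapsto\big[\,Q\mapsto \pi(\beta(g^{-1},gQ)^{-1})^{-1}h(gQ)\,\big]$ is a right inverse. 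Similarly, for fixed $g$ the map $P\mapsto\beta(g^{-1},P)^{-1}$ takes only finitely many values in $\Lambda^2$ (Lemma~\ref{LemmabetaCocycle} with $K=\{g^{-1}\}$), so $\widehat b(g)=\big[\,P\mapsto b(\beta(g^{-1},P)^{-1})\,\big]$ is a bounded Borel simple function into $E$ and hence lies in $\widehat E=L^p(X,\nu;E)$.

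\emph{The algebraic core.} For the key estimate I would fix $g_1,g_2\in G$ and $P\in X$ and set $a=a(P):=\beta(g_1^{-1},P)^{-1}$ and $c=c(P):=\beta(g_2^{-1},g_1^{-1}P)^{-1}$, both in $\Lambda^2\subset\Lambda^\infty$. Applying \eqref{betaCocycle} with $g_2^{-1}$ and $g_1^{-1}$ in place of $g_1$ and $g_2$ gives $\beta((g_1g_2)^{-1},P)=c^{-1}a^{-1}$, i.e.\ $\beta((g_1g_2)^{-1},P)^{-1}=ac$, and hence
\[
\widehat b(g_1g_2)(P)=b(ac),\qquad \widehat b(g_1)(P)=b(a),\qquad \big(\widehat\pi(g_1)\widehat b(g_2)\big)(P)=\pi(a)\,b(c).
\]
Therefore $\big(\widehat b(g_1g_2)-\widehat b(g_1)-\widehat\pi(g_1)\widehat b(g_2)\big)(P)=b(ac)-b(a)-\pi(a)b(c)$, whose $E$-norm is at most $D(b)$ for $\nu$-a.e.\ $P$; integrating the $p$-th power over the probability space $(X,\nu)$ yields $D(\widehat b)\le D(b)$, which gives (i). For (ii) I would run the same computation with an arbitrary $f\in\widehat E$ in place of $\widehat b(g_2)$: when $\pi$ is multiplicative one gets $\pi(a)\pi(c)=\pi(ac)=\pi(\beta((g_1g_2)^{-1},P)^{-1})$ and $g_2^{-1}g_1^{-1}P=(g_1g_2)^{-1}P$, so $\widehat\pi(g_1)\widehat\pi(g_2)=\widehat\pi(g_1g_2)$, while $\widehat\pi(e)=\mathrm{id}$ because $\beta(e,P)=e$. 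For (iii), taking $D(b)=0$ in (i) shows $\widehat b$ is an honest $1$-cocycle for the representation $\widehat\pi$, so only continuity remains.

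\emph{Measurability and continuity.} The hard part is that $g\mapsto\widehat\pi(g)$ and $g\mapsto\widehat b(g)$ are Borel. Writing $G=\bigcup_nK_n$ with $K_n$ compact, Lemma~\ref{LemmabetaCocycle} shows $\beta_s(K_n^{-1}\times X)$ is a finite subset of $\Lambda^2$, so on $K_n\times X$ the Borel map $(g,P)\mapsto\beta(g^{-1},P)$ is finite-valued with Borel fibres; consequently $(g,P)\mapsto b(\beta(g^{-1},P)^{-1})\in E$ and $(g,P)\mapsto\pi(\beta(g^{-1},P)^{-1})\in O(E)$ are jointly Borel and, for fixed $g$, bounded. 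To pass from joint Borel maps on $G\times X$ to Borel maps $G\to\widehat E$, resp.\ $G\to O(\widehat E)$, I would pair $\widehat b(g)$ with elements $\psi\in L^q(X,\nu;E^*)\cong\widehat E^*$ (using reflexivity of $L^p$), observe that $(g,P)\mapsto\langle\psi(P),b(\beta(g^{-1},P)^{-1})\rangle$ is jointly Borel and integrable on each $K_n\times X$, apply Fubini to obtain Borel dependence on $g$, and invoke Pettis' measurability theorem together with separability of $\widehat E$; the argument for $g\mapsto\widehat\pi(g)\xi$ is identical, using $G$-invariance of $\nu$ for integrability. Finally, for (iii), $\widehat\rho=(\widehat\pi,\widehat b)$ is then a Borel homomorphism from the Polish group $G$ to the Polish group ${\rm Is}(\widehat E)=O(\widehat E)\ltimes\widehat E$ (Polish since $\widehat E$ is a separable $L^p$-space), hence continuous because Borel homomorphisms between Polish groups are automatically continuous; continuity of $\widehat\pi$ and $\widehat b$ follows by composing with the two coordinate projections.
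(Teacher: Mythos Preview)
Your proof is correct and follows essentially the same route as the paper: use the cocycle identity \eqref{betaCocycle} for $\beta$ to reduce the wq-estimate for $\widehat b$ to the pointwise wq-estimate for $b$, verify the homomorphism property of $\widehat\pi$ the same way, and invoke automatic continuity of Borel homomorphisms between Polish groups for (iii). The only notable difference is that you obtain $p$-integrability of $\widehat b(g)$ directly from the finiteness assertion in Lemma~\ref{LemmabetaCocycle} (with $K=\{g^{-1}\}$), whereas the paper argues via the norm bound of Lemma~\ref{HaagerupTriangleInequality}; your route is slightly more direct, and you also supply the measurability details that the paper leaves implicit.
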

\begin{proof} (i) $\widehat{\pi}$ maps $G$ into $U(\widehat{E})$ since $\pi$ is orthogonal and is clearly a Borel map. As for well-definedness of $\widehat{b}$ we have to show that $\widehat{b}(g)$ is $p$-integrable for every $g \in G$. Recall from \eqref{Xglambda} the definition of the sets $X_\Lambda(g, \lambda)$ for $g \in G$ and $\lambda \in \Lambda^2$. Since $\beta$ takes values in $\Lambda^2$ we have
\[
X_\Lambda = \bigsqcup_{\lambda \in \Lambda^2} X_\Lambda(g, \lambda).
\]
for every fixed $g \in G$. We deduce that
\begin{eqnarray*} \|\widehat{b}(g)\|^p &=& \int_{X_{\Lambda}} \|\widehat{b}(g)(P)\|^p \, d\nu(P) \quad = \quad  \int_{X_{\Lambda}} \|b(\beta(g^{-1}, P)^{-1})\|^p \,d\nu(P)\\
&=& \sum_{\lambda \in \Lambda^2} \|b(\lambda)\|^p \nu(X_\Lambda(g, \lambda)).
\end{eqnarray*}
By Lemma \ref{HaagerupTriangleInequality} we have $\nu(X_\Lambda(g, \lambda)) = 0$ unless $\|\lambda\| \leq \|g\| +4 R$. Since $\Lambda$ is uniformly discrete in $G$, it follows that all but finitely many summands are $0$. This shows that $\|\widehat{b}(g)\| < \infty$, whence $\widehat{b}(g) \in \widehat{E}$ for all $g \in G$, and the map $\widehat{b}$ is evidently Borel. It remains to show that $\widehat{b}$ is a wq-cocycle with respect to $\widehat{\pi}$. To this end we observe that for all $g_1, g_2 \in G$ and $P \in X_\Lambda$ we have
\[
\beta(g_2^{-1}g_1^{-1}, P)^{-1}) \quad = \quad \beta(g_1^{-1}, P)^{-1} \beta(g_2^{-1}, g_1^{-1}P)^{-1}),
\]
and hence
\begin{eqnarray*}
 \|\widehat{b}(g_1g_2)-\widehat{b}(g_1) - \widehat{\pi}(g_1)\widehat{b}(g_2)\|
& =& \| b(\beta(g_1^{-1}, P)^{-1} \beta(g_2^{-1}, g_1^{-1}P)^{-1})- b (\beta(g^{-1}, P)^{-1}) \\&&- \pi(\beta(g^{-1}, P)^{-1})b( \beta(g_2^{-1}, g_1^{-1}P)^{-1})\|
\quad \leq\quad D(b),
\end{eqnarray*}
which shows that $D(\widehat{b}) \leq D(b)$ and establishes (i).

(ii) It remains to show only that if $\pi$ is a homomorphism, then so is $\widehat{\pi}$. Now by \eqref{betaCocycle} we have for all $g_1,g_2 \in G$ and $P \in X_\Lambda$,
\begin{eqnarray*}
\widehat{\pi}(g_1g_2)f(P) &=& \pi(\beta(g_2^{-1}g_1^{-1}, P)^{-1})f(g_2^{-1}g_1^{-1}P)\\
&=& \pi(\beta(g_1^{-1}, P)^{-1})\pi(\beta(g_2^{-1}, g_1^{-1}P)^{-1}) f(g_2^{-1}(g_1^{-1}P))\\
&=& \pi(\beta(g_1^{-1}, P)^{-1})\widehat{\pi}(g_2)f(g_1^{-1}P)\\
&=& \widehat{\pi}(g_1)\widehat{\pi}(g_2)f(P),
\end{eqnarray*}
and hence $\widehat{\pi}(g_1g_2) = \widehat{\pi}(g_1)\widehat{\pi}(g_2)$.

(iii) Assume that $(\pi, b)$ is an affine $\mathcal E$-action, i.e., a quasi-$\mathcal E$-pair with $D(b) = 0$. By (ii), this implies that $(\widehat{\pi}, \widehat{b})$ is a quasi-$\mathcal E$-pair, and by (i) we have $D(\widehat{b}) \leq D(b) = 0$, hence $b$ is a cocycle. Now the $\widehat{\rho} = (\widehat{\pi}, \widehat{b}): G \to {\rm Is}(\widehat{E})$ is Borel. It the follows from \cite{Pe} that $\widehat{\rho}$ is continuous, hence defines an affine $\mathcal E$-action of $G$.
\end{proof}
\begin{definition} In the situation of Proposition \ref{InductionWellDefined} the pair $\widehat{\rho} = (\widehat{\pi}, \widehat{b})$ is said to be \emph{$(\nu, s)$-$L^p$-induced} from $(\pi, b)$ and we write
\[
(\nu,s,p){\rm -Ind}_{(\Lambda, \Lambda^\infty)}^G (\pi, b) := (\widehat{\pi}, \widehat{b}).
\]
If $b$ is a wq-$\mathcal E$-cocycle (respectively an $\mathcal E$-quasicocycle or an $\mathcal E$-cocycle), then $\widehat{b}$ is called the $(\nu, s)$-$L^p$-induced wq-$\mathcal E$-cocycle (respectively the $(\nu, s)$-$L^p$-induced $\mathcal E$-quasicocycle or the $(\nu, s)$-$L^p$-induced $\mathcal E$-cocycle).
\end{definition}
\begin{remark} $L^p$-induction applies in particular to the case where $\Lambda = \Lambda^\infty$ is actually a uniform lattice in $G$, and in this cases we recover the classical constructions. In the sequel, when dealing with induction from uniform lattices we will thus use the same notations as introduced in the approximate lattice case above. 
\end{remark}

\subsection{Dependence on the section and the measure}
We keep the notation of the previous subsection. In particular, $1 < p < \infty$ and $\mathcal E$ denotes an $L^p$-closed class of uniformly convex separable Banach spaces (for example, the class of $L^p$-spaces). 

\subsubsection{Independence of the section}
It turns out that the dependence of $(\nu,s){\rm -Ind}_{(\Lambda, \Lambda^\infty)}^G (\pi, b)$ on the section $s$ is inessential in the following sense. Let $s_1, s_2: X_\Lambda \to G$ be bounded Borel sections and recall from Lemma \ref{ChangeOfSection} that
\[
\beta_{s_2}(g^{-1}, P)^{-1} = u(P)^{-1}\beta_{s_1}(g^{-1}, P)^{-1}u(g^{-1}P).
\]
where $u: X_\Lambda \to \Lambda^2$ is given by $u(P) :=  s_1(P)^{-1}s_2(P)$. Now let $(\pi, b)$ be be a wq-$L^p$-pair for $\Lambda^\infty$ with underlying space $E$, and set
\[
(\widehat{\pi}_j, \widehat{b}_j) := (\nu, s_j){\rm -Ind}_{(\Lambda, \Lambda^\infty)}^G (\pi, b)\quad (j \in \{1,2\}).
\]
Both pairs have the same underlying space $\widehat{E} = L^p(X_\Lambda, E)$, and we define an isometric isomorphism $U: \widehat{E} \to \widehat{E}$ by
\[
Uf(P) := \pi(u(P)) f(P) + b(u(P)).
\]
If we assume that $(\pi, b)$ is an affine $\mathcal E$-action, then by \eqref{bInverse} we have
\[
\pi(u(P))b(u(P)^{-1})+b(u(P) = 0 \quad \text{for all }P \in X_\Lambda,
\]
and hence
\begin{eqnarray*}
(U \circ \widehat{\rho}_2(g))f(P) &=& \pi(u(P)) (\pi(\beta_{s_2}(g^{-1}, P)^{-1})f(g^{-1}P)+ \widehat{b}_2(g)) + b(u(P))\\
&=& \pi(u(P))\pi(u(P)^{-1} \beta_{s_1}(g^{-1}, P)^{-1} u(g^{-1}P))f(g^{-1}P)\\
&&+ \pi(u(P))b(u(P)^{-1} \beta_{s_1}(g^{-1}, P)^{-1} u(g^{-1}P)) + b(u(P))\\
&=&  \pi(\beta_{s_1}(g^{-1}, P)^{-1})\pi(u(g^{-1}P))f(g^{-1}P)\\
&&+ b(\beta_{s_1}(g^{-1}, P)^{-1}) + \pi(\beta_{s_1}(g^{-1}, P)^{-1}) b(u(g^{-1}P))\\
&=& \widehat{\pi}_1(g)(\pi(u(P)) f(P) + b(u(P))) +  \widehat{b}_1(g)(P)\\
&=& \widehat{\pi}_1(g)(U f)(P) + \widehat{b}_1(g)(P)\\
&=&(\widehat{\rho_1}(g) \circ U)f(P).
\end{eqnarray*}
This shows that if $(\pi, b)$ is an affine $\mathcal E$-action, then $(\widehat{\pi}_1, \widehat{b}_1)$ and $(\widehat{\pi}_2, \widehat{b}_2)$ are intertwined by the isometric isomorphism $U$, hence define isomorphic affine actions.

If $(\pi, b)$ is only a quasi-$\mathcal E$-pair, then $U$ still intertwines the orthogonal representations $\widehat{\pi}_1$ and $\widehat{\pi}_2$, and the quasi-cocycles $\widehat{b}_1$ and $\widehat{b}_2$ are intertwined by $U$ up to a bounded error.

Finally, if $(\pi, b)$ is merely a wq-$\mathcal E$-pair, then there is no control about  $\widehat{\pi}_1$ and $\widehat{\pi}_2$, but the induced wq-$\mathcal E$-cocycles $\widehat{b}_1$ and $\widehat{b}_2$ are still intertwined by $U$ up to a bounded error.

It follows that properties of $\widehat{b}$ which are stable under bounded pertubations, such as boundedness or properness, are independent of the choice of section used to define it. In view of this essential independence of the section we will often write
\[
(\nu,p){\rm -Ind}_{(\Lambda, \Lambda^\infty)}^G (\pi, b) := 
 (\nu,s,p){\rm -Ind}_{(\Lambda, \Lambda^\infty)}^G (\pi, b) 
\]
and suppress the choice of section $s$ from our notation.

\subsubsection{Dependence on the measure}
The following example shows that even in the case of an affine $L^p$-action $(\pi, b)$ the induced representation $(\nu,p){\rm -Ind}_{(\Lambda, \Lambda^\infty)}^G (\pi, b)$ does depend in an essential way on the choice of invariant measure $\nu$ on the hull:
\begin{example}\label{MeasureDependence} 
Let $G = (\bR,+)$ and suppose that $\Lambda \subset G$ is a uniform approximate lattice with $\Lambda^\infty = \bZ$ such that the hull $X_\Lambda$ contains 
the sets $2\bZ$ and $3\bZ$, whence the $G$-hulls $X_2 := X_{2\bZ}$ and $X_3 := X_{3\bZ}$. (An explicit construction of such a set $\Lambda$ is given e.g.\ in \cite[Example 4.15]{BH}.) We note that $X_2$ and $X_3$ admit unique $G$-invariant probability measures $\nu_2$ and $\nu_3$, which we can view as probability measures on $X_\Lambda$ as well. For every affine $L^p$-action $(\pi, b)$ of $\Lambda^\infty$ and $q \in \{2,3\}$ we have
\[
(\widehat{\pi}_q, \widehat{b}_q) := \nu_q{\rm -Ind}_{(\Lambda, \Lambda^\infty)}^G (\pi, b) \cong {\rm Ind}_{q\Z}^G(\pi|_{q\Z}, b|_{q\Z}).
\]
In particular, if $(\pi, b)$ is realized on an $L^p$-space $E$ and $T := \pi(1) $, then $\nu_q{\rm -Ind}_{(\Lambda, \Lambda^\infty)}^G (\pi, b)$ can be realized on the space
\[
\widehat{E}_q := \{f \in L^p(\R; E) \mid f(x + q) = T^q f(x) \},
\]
with $\widehat{\pi}_q(g)f(x) := f(x-g)$. In general, the orthogonal representations $\widehat{\pi}_2$ and $\widehat{\pi}_3$ are not equivalent.
For instance, consider the case when $E = \bR$ and $Tv = -v$ for $v \in E$. If $\widehat{E}_2$ and $\widehat{E}_3$ were isomorphic (as
orthogonal representations) via some linear isomorphism $S : \widehat{E}_2 \ra \widehat{E}_3$, then a straightforward calculation shows that
$S(f + \widehat{\pi}_2(1) f) = 0$, and thus every $f \in \widehat{E}_2$ would satisfy $f(x-1) = -f(x)$, which is clearly not the case for the 
element $f \equiv 1$ in $\widehat{E}_2$. Thus, in general,
\[
\nu_2{\rm -Ind}_{(\Lambda, \Lambda^\infty)}^G (\pi, b) \quad \not \cong \quad \nu_3{\rm -Ind}_{(\Lambda, \Lambda^\infty)}^G (\pi, b).
\]
\end{example}

\subsection{$L^p$-induction for model sets}
We illustrate our construction of $L^p$-induction for the case of uniform regular symmetric model sets. Throughout this subsection let  $(G, H, \Gamma)$ be a uniform cut-and-project scheme and let $\Lambda = \Lambda(G, H, \Gamma, W)$ be a symmetric regular uniform model set. We will use the notations and conventions introduced in Remark \ref{ModelSetConventions}. In particular, we set $Y := (G\times H)/\Gamma$. We recall that the map $\pi_G$ induces an isomorphism 
\[
\pi_G: (\Gamma_W, \Gamma) \to (\Lambda, \Lambda^\infty)
\]
of approximate groups, where $\Gamma_W := \Gamma \cap (G \times W)$.

Now let $\mathcal E$ denote an $L^p$-closed class of uniformly convex separable Banach spaces and consider an affine isometric action $\rho_0 = (\pi_0, b_0)$ of $\Lambda^\infty$ on some $E \in \mathcal E$. Via the isomorphism $\pi_G$ this induces an affine $L^p$-action $\rho := \rho_0 \circ \pi_G = (\pi, b)$ of $\Gamma$. Since $\Gamma$ is a uniform lattice in $G \times H$, we can induce this action to obtain an affine action ${\rm Ind}_\Gamma^{G \times H} (\pi, b)$ of $G \times H$ on the Banach space $\widehat{E} := L^p((G\times H)/\Gamma; E)$.
\begin{proposition}\label{ModelModel} Assume that $\Lambda$ has large window and denote by $\nu$ the unique $G$-invariant measure on $X_\Lambda$. Then
\begin{equation}\label{ModelIsoModel}
\nu{-\rm Ind}_{(\Lambda, \Lambda^\infty)}^G (\pi_0, b_0) \cong \left.\left({\rm Ind}_\Gamma^{G \times H} (\pi, b)\right)\right|_G.
\end{equation}
\end{proposition}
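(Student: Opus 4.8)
The plan is to compute both sides of \eqref{ModelIsoModel} with respect to a matching pair of bounded Borel sections and then to write down the intertwining isometry explicitly. First I would fix a bounded Borel section $s = (s_G, s_H)\colon Y \to G \times H$ as in Remark \ref{ModelSetConventions} and, invoking Proposition \ref{CompatibleSections} (this is the only place the large-window hypothesis is used), choose a bounded Borel section $\sigma\colon X_\Lambda \to G$ with $\sigma(P) = \pi_G(s(\iota(P))) = s_G(\iota(P))$ for $\nu$-almost every $P$. By the section-independence discussed after Proposition \ref{InductionWellDefined} (applicable since $(\pi_0,b_0)$ is an affine isometric action), the left-hand side of \eqref{ModelIsoModel} may be realized, on $L^p(X_\Lambda,\nu;E)$, using the section $\sigma$; denote the resulting pair by $(\widehat\pi, \widehat b)$, so that $\widehat\pi(g)f(P) = \pi_0(\beta_\sigma(g^{-1},P)^{-1})f(g^{-1}P)$ and $\widehat b(g)(P) = b_0(\beta_\sigma(g^{-1},P)^{-1})$. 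Likewise ${\rm Ind}_\Gamma^{G\times H}(\pi,b)$ may be realized, on $L^p(Y,m_Y;E)$, using the section $s$; writing $\alpha_s$ for the $\Gamma$-valued cocycle over $G\times H$ associated with $s$ (Lemma \ref{LemmabetaCocycle} applied to the uniform lattice $\Gamma < G\times H$), this pair $(\widetilde\pi,\widetilde b)$ is given by $\widetilde\pi(g,h)\phi(y) = \pi(\alpha_s((g,h)^{-1},y)^{-1})\phi((g,h)^{-1}y)$ and $\widetilde b(g,h)(y) = b(\alpha_s((g,h)^{-1},y)^{-1})$.

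Next, since by Theorem \ref{ModelSetBackground} the map $\iota\colon (X_\Lambda,\nu) \to (Y,m_Y)$ is a $G$-equivariant isomorphism of probability measure spaces, the operator $U\colon L^p(Y,m_Y;E) \to L^p(X_\Lambda,\nu;E)$, $U\phi := \phi \circ \iota$, is an isometric isomorphism of $L^p$-spaces. I claim $U$ intertwines the restriction of $(\widetilde\pi,\widetilde b)$ to the subgroup $G \hookrightarrow G\times H$, $g \mapsto (g,e)$ (which acts on $Y$ compatibly with the $G$-action on $X_\Lambda$ under $\iota$), with $(\widehat\pi,\widehat b)$. The crux is a comparison of cocycles. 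Fix $g \in G$. Since $P \mapsto g^{-1}P$ preserves $\nu$, the identities $\sigma(P) = s_G(\iota(P))$ and $\sigma(g^{-1}P) = s_G(\iota(g^{-1}P)) = s_G((g,e)^{-1}\iota(P))$ hold simultaneously for $\nu$-a.e.\ $P$; for such $P$,
\[
\beta_\sigma(g^{-1},P) = \sigma(g^{-1}P)^{-1}g^{-1}\sigma(P) = s_G\big((g,e)^{-1}\iota(P)\big)^{-1}\,g^{-1}\,s_G(\iota(P)),
\]
which is exactly the first coordinate of the element $\alpha_s((g,e)^{-1},\iota(P)) = s((g,e)^{-1}\iota(P))^{-1}(g,e)^{-1}s(\iota(P)) \in \Gamma$, i.e.\ $\beta_\sigma(g^{-1},P) = \pi_G\big(\alpha_s((g,e)^{-1},\iota(P))\big)$.

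Finally, using $(\pi,b) = (\pi_0,b_0)\circ\pi_G$ and the $G$-equivariance of $\iota$, for every $\phi \in L^p(Y,m_Y;E)$ and $\nu$-a.e.\ $P$ one gets
\[
\widehat\pi(g)(U\phi)(P) = \pi_0\big(\beta_\sigma(g^{-1},P)^{-1}\big)\,\phi(\iota(g^{-1}P)) = \pi\big(\alpha_s((g,e)^{-1},\iota(P))^{-1}\big)\,\phi\big((g,e)^{-1}\iota(P)\big) = \big(U\,\widetilde\pi(g,e)\phi\big)(P),
\]
and likewise $\widehat b(g)(P) = b_0\big(\beta_\sigma(g^{-1},P)^{-1}\big) = b\big(\alpha_s((g,e)^{-1},\iota(P))^{-1}\big) = \big(U\,\widetilde b(g,e)\big)(P)$. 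Hence $U$ intertwines the two affine $L^p$-actions of $G$, which is precisely the assertion of \eqref{ModelIsoModel}. I expect the only genuinely delicate points to be the bookkeeping with null sets under the $G$-action — so that the compatibility $\sigma = s_G \circ \iota$ may legitimately be used at both $P$ and $g^{-1}P$ — together with fixing the left/right conventions so that $\beta_\sigma(g^{-1},P)$ matches the $G$-coordinate of $\alpha_s$ on the nose rather than its inverse; everything else is routine.
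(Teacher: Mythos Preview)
Your proposal is correct and follows essentially the same route as the paper: choose compatible sections via Proposition~\ref{CompatibleSections}, verify the cocycle identity $\beta_\sigma(g^{-1},P)=\pi_G\big(\alpha_s((g,e)^{-1},\iota(P))\big)$, and conclude that precomposition with $\iota$ intertwines the two actions. Your treatment of the null-set issue (using that $P\mapsto g^{-1}P$ preserves $\nu$ so that the almost-everywhere identity $\sigma=s_G\circ\iota$ can be invoked at both $P$ and $g^{-1}P$) is in fact more explicit than the paper's, which silently passes over this point.
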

\begin{proof} Firstly, let $\iota: (X_\Lambda, \nu) \to (Y, m_Y)$ denote the measurable isomorphism from Theorem \ref{ModelSetBackground}. 
Secondly, let $s: Y \to G \times H$ be a bounded Borel section chosen as in Remark \ref{ModelSetConventions} and denote by $(\widehat{\pi}, \widehat{b})$ the representation ${\rm Ind}_\Gamma^{G \times H} (\pi, b)$ on $\widehat{E}$ defined by means of this section. By Proposition \ref{CompatibleSections} we can choose a bounded Borel section $\sigma: X_\Lambda \to G$ such that for almost all $P \in X_\Lambda$,
\[
\sigma(P) = \pi_G(s(\iota(P))).
\]
Denote by $(\widetilde{\pi},\widetilde{b})$ the model of $\nu{-\rm Ind}_{(\Lambda, \Lambda^\infty)}^G (\pi_0, b_0)$ defined on $\widetilde{E} := L^p(X_\Lambda, \nu; E)$ by means of the section $\sigma$. Then $\iota$ induces an isomorphism
\[
\iota^*: \widehat{E} \to \widetilde{E}, \quad \iota^*f := f\circ \iota.
\]
We claim that this isomorphism intertwines the action of $G \times \{e\}$ via $(\widehat{\pi}, \widehat{b})$ on $\widehat{E}$ with the action of $G$ on $\widetilde{E}$ via $(\widetilde{\pi}, \widetilde{b})$. Towards the proof of the claim we first observe that for $g \in G$ and $x \in X_\Lambda$ we have
\begin{eqnarray*}
\beta_\sigma(g^{-1}, x)^{-1} &=& \sigma(x)^{-1} g\sigma(gx) \quad = \quad \pi_G(s(\iota(x)))^{-1}g\pi_G(s(\iota(gx)))\\
&=& \pi_G(s(\iota(x))^{-1}(g,e)s((g,e)\iota(x)))\\
&=&\pi_G(\beta_s(g^{-1}, \iota(x))^{-1}),
\end{eqnarray*}
hence
\[
\pi_0(\beta_\sigma(g^{-1}, x)^{-1}) = \pi(\beta_s(g^{-1}, \iota(x))^{-1}) \quad \text{and} \quad b_0(\beta_\sigma(g^{-1}, x)^{-1}) = b(\beta_s(g^{-1}, \iota(x))^{-1}).
\]
This implies that
\begin{eqnarray*}
(\widetilde{\rho}(g) \circ \iota^*)f(x) &=& \pi_0(\beta_\sigma(g^{-1}, x)^{-1})f(\iota(g)^{-1}x) +  b_0(\beta_\sigma(g^{-1}, x)^{-1})\\
&=&  \pi(\beta_s(g^{-1}, \iota(x))^{-1})f((g,e)^{-1}\iota(x)) + b(\beta_s(g^{-1}, \iota(x))^{-1})\\
&=& (\iota^* \circ \widehat{\pi}(g,e))f(x),
\end{eqnarray*}
which establishes the claim and thereby finishes the proof.
\end{proof}

\begin{remark} 
Two remarks are in order:
\begin{enumerate}[(i)]
\item Proposition \ref{ModelModel} shows in particular that the $L^p$-induced affine action $\nu{-\rm Ind}_{(\Lambda, \Lambda^\infty)}^G (\pi_0, b_0)$ can be extended to an affine $\mathcal E$-action of the lcsc group $G \times H$. The additional flexibility coming from the $H$-action will turn out to be useful e.g. in establishing various Kazhdan properties of model sets.
\item The right-hand side of \eqref{ModelIsoModel} makes sense for every model set $\Lambda$, regardless of whether the hull $X_\Lambda$ admits a $G$-invariant measure or whether $\Lambda$ has large window. This allows us to define induction also for model sets, which are not strong approximate lattice.
\end{enumerate}
\end{remark}

\section{Analytic properties of approximate groups}
\label{analyticprops}

\subsection{Property (T) and its relatives}
We spell out the definitions of the various Kazhdan-type properties which we investigate in the sequel. Throughout this section, $1 < p < \infty$ and $\mathcal E$ denotes an $L^p$-closed class of uniformly convex separable Banach spaces (for example, the class of $L^p$-spaces). 
\begin{definition} Let $G$ be a lcsc group and let $A$ be subset of $G$ and $p \in (1, \infty)$.
\begin{enumerate}[(i)]
\item $G$ has Property (F$\mathcal E$) relative to $A$ if every $\mathcal E$-cocycle on $G$ is bounded on $A$.
\item $G$ has Property (FF$\mathcal E$) relative to $A$ if every $\mathcal E$-quasi-cocycle on $G$ is bounded on $A$.
\item $G$ has Property (FFF$\mathcal E$) relative to $A$ if every weak $\mathcal E$-quasi-cocycle on $G$ is bounded on $A$.
\end{enumerate}
If $A = G$ then we simply say that $G$ has Property (F$\mathcal E$), (FF$\mathcal E$) or (FFF$\mathcal E$) respectively. 
In particular, this defines relative and absolute versions of Property (F$L^p$), (FF$L^p$) and (FFF$L^p$). If $p=2$, then we also write ``$H$'' (for Hilbert space) instead of ``$L^2$''. \end{definition}
\begin{remark}\label{RemarkKazhdan}
\begin{enumerate}[(i)]
\item In the sequel we refer to the Properties (F$\mathcal E$), (FF$\mathcal E$) and (FFF$\mathcal E$) as \emph{Kazhdan-type properties} with respect to $\mathcal E$. Property (F$\mathcal E$) is equivalent to $A$ having bounded orbits in every affine $\mathcal E$-action of $G$.
\item If $G$ enjoys some Kazhdan-type property, then $(G, A)$ also enjoys this property for every subset $A\subset G$. 
\item If $A$ is actually a \emph{subgroup}, then Property (F$\mathcal E$) is equivalent to $A$ having a fixpoint in every affine $\mathcal E$-action of $G$, hence the name. For general subsets, relative Property (F$\mathcal E$) is \emph{not} a fixpoint property (and hence a more appropriate term might be ``Property (B$L^p$)''). In fact, $A$ having a fixpoint in every affine $\mathcal E$-action of $G$ is equivalent to $G$ having Property (F$\mathcal E$) relative to the group generated by $A$, which is in general a much stronger property.
\item As remarked in the introduction, Property (FF$H$) was introduced by Burger and Monod \cite{BM1} under the name Property (TT), and Property (FFF$H$) was introduced by Ozawa \cite{Oz11} under the name Property (TTT). Cornulier \cite{Co1} was the first to systematically study Property (T) relative to arbitrary subsets. The extension to $L^p$-spaces (and more general classes of Banach spaces) goes back to Bader, Furman, Gelander and Monod \cite{BFGM}.
\end{enumerate}
\end{remark}
We now specialize to our case of interest:
\begin{definition} Let $\mathcal T$ be a Kazhdan-type property. An approximate group $(\Lambda, \Lambda^\infty)$ is said to have $\mathcal T$ if $\Lambda^\infty$ has $\mathcal T$ relative to $\Lambda$.
\end{definition}
\begin{remark} As a special case of Remark \ref{RemarkKazhdan} we observe that if $\Lambda^\infty$ enjoys a Kazhdan-type property $\mathcal T$, then so does $(\Lambda, \Lambda^\infty)$.  We will see in Example \ref{ExTnoT} below that the converse is not true.
\end{remark}

\subsection{Elementary constructions preserving Kazhdan-type properties}
The following facts follow straight from the definitions.
\begin{lemma} Let $\mathcal E$ be a class of uniformly convex separable Banach spaces. Let $G_1, G_2$ be lcsc groups, $A_1 \subset G_1$, $A_2 \subset G_2$ be subsets, and let $\mathcal T$ be a Kazhdan type property with respect to $\mathcal E$.
\begin{enumerate}[(i)]
\item If $G_1$ has $\mathcal T$, then $(G_1, A_1)$ has relative $\mathcal T$
\item If $A_1$ is finite, then $(G_1, A_1)$ has relative $\mathcal T$.
\item If $(G_1, A_1)$ and $(G_2, A_2)$ have relative $\mathcal T$, then so has $(G_1 \times G_2, A_1 \times A_2)$.
\item If $G_1 = G_2$ and $(G_1, A_1)$ and $(G_2, A_2)$ have relative $\mathcal T$, then so has $(G_1, A_1A_2)$.
\item If $F\subset G_1$ is finite and $(G_1, A_1)$ has relative $\mathcal T$, then so have $(G_1, A_1F)$ and $(G_1, FA_1)$.\qed
\end{enumerate}
\end{lemma}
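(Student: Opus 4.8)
The plan is to check the five assertions directly against the definitions; the single computational ingredient is the elementary estimate
\[
\|b(gh)\| \;\le\; \|b(g)\| + \|b(h)\| + D(b),
\]
valid for any wq-$L^p$-pair $(\pi, b)$ for a group and any two elements $g,h$ of that group (here $D(b)=0$ when $b$ is an honest $L^p$-cocycle, and $\|\pi(g)b(h)\| = \|b(h)\|$ because $\pi(g)$ is orthogonal). Since (F$L^p$), (FF$L^p$) and (FFF$L^p$) differ only in the class of pairs $(\pi, b)$ over which they quantify, and since restricting a wq-$L^p$-pair for $G$ to a closed subgroup $K < G$ again yields a wq-$L^p$-pair for $K$ --- with defect $D$ not increasing (the defining supremum is taken over a smaller set of pairs $(g_1,g_2)$), with $\pi|_K$ a homomorphism if $\pi$ was, and with $\pi|_K, b|_K$ Borel, resp.\ continuous, if $\pi, b$ were --- the arguments below apply verbatim to all three properties at once.

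First, (i) is immediate: if $G_1$ has $\mathcal T$ then every relevant cocycle on $G_1$ is bounded on $G_1$, hence on $A_1$. For (ii), any map $b: G_1 \to E$ is bounded on a finite set $A_1$, the quantity $\sup_{a \in A_1}\|b(a)\|$ being a finite maximum; so every relevant cocycle on $G_1$ is bounded on $A_1$, and $(G_1, A_1)$ has relative $\mathcal T$.

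Next I would treat (iii). Given a pair $(\pi, b)$ for $G_1 \times G_2$ of the relevant type, its two coordinate restrictions $\bigl(g_1 \mapsto \pi(g_1, e),\, g_1 \mapsto b(g_1, e)\bigr)$ and $\bigl(g_2 \mapsto \pi(e, g_2),\, g_2 \mapsto b(e, g_2)\bigr)$ are pairs of the same type for $G_1$, resp.\ $G_2$, so the relative-$\mathcal T$ hypotheses give $M_i := \sup_{a_i \in A_i}\|b(a_i)\| < \infty$ for $i = 1,2$. Writing $(a_1, a_2) = (a_1, e)(e, a_2)$ and applying the displayed estimate yields $\|b(a_1, a_2)\| \le M_1 + M_2 + D(b) < \infty$, so $b$ is bounded on $A_1 \times A_2$. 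Assertion (iv) is the same estimate applied inside a single group $G = G_1 = G_2$: boundedness of $b$ on $A_1$ and on $A_2$ forces $\|b(a_1 a_2)\| \le \|b(a_1)\| + \|b(a_2)\| + D(b)$ to be bounded over $A_1 A_2$. Finally (v) follows by combining (ii) and (iv): $(G_1, F)$ has relative $\mathcal T$ since $F$ is finite, so $(G_1, A_1 F)$ and $(G_1, F A_1)$ have relative $\mathcal T$ by two applications of (iv).

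I do not expect any genuine obstacle here; the only points deserving a sentence are that passing to a closed subgroup preserves Borel measurability (and, in the affine-action case, continuity) of $\pi$ and $b$, so that the restricted data is again a bona fide pair of the relevant type, and that the defect $D$ cannot grow under such restriction.
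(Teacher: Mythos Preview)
Your proof is correct and fills in exactly the sort of details the paper has in mind: the paper itself gives no proof at all, declaring that ``the following facts follow straight from the definitions'' and ending the statement with a \qed. Your use of the elementary estimate $\|b(gh)\| \le \|b(g)\| + \|b(h)\| + D(b)$, together with the observation that restriction to a closed subgroup preserves the relevant class of pairs, is precisely the intended argument.
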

Concerning passage to subpairs we observe:
\begin{lemma}\label{TUnderFiniteIndex} Let $\Gamma_1 < \Gamma_2$ be countable groups, $A_2 \subset \Gamma_2$, $A_1 \subset  \Gamma_1 \cap A_2$, , and let $\mathcal T$ be a Kazhdan type property relative to a class $\mathcal E$ of uniformly convex separable Banach spaces which is $L^p$-closed for some $p \in (1, \infty)$.
\begin{enumerate}[(i)]
\item If $(\Gamma_2, A_2)$ has relative $\mathcal T$ and $\Gamma_1$ has finite index in $\Gamma_2$, then $(\Gamma_1, A_1)$ has relative $\mathcal T$.
\item If $A_1$ is left-syndetic in $A_2$ and $(\Gamma_1, A_1)$ has relative $\mathcal T$, then $(\Gamma_2, A_2)$ has relative $\mathcal T$.
\end{enumerate}
\end{lemma}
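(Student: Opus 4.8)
For part (i), suppose $(\Gamma_2, A_2)$ has relative $\mathcal T$ and $[\Gamma_2 : \Gamma_1] < \infty$. (I read the statement as concluding that $(\Gamma_1, A_1)$ has relative $\mathcal T$; the ordering of the pair in the displayed statement appears to be a typo.) The plan is to start from an $L^p$-(quasi-/weak quasi-)cocycle $b$ on $\Gamma_1$ with respect to some orthogonal representation (or Borel map) $\pi: \Gamma_1 \to O(E)$, induce it up to $\Gamma_2$ using the finite-index induction $\widehat b := \operatorname{Ind}_{\Gamma_1}^{\Gamma_2}(\pi, b)$ on $\widehat E := \ell^p(\Gamma_2/\Gamma_1; E)$, and then invoke relative $\mathcal T$ of $(\Gamma_2, A_2)$ to conclude that $\widehat b$ is bounded on $A_2$. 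It then remains to observe that boundedness of $\widehat b$ on $A_2 \supset A_1$ forces boundedness of $b$ on $A_1$: writing out $\widehat b(\gamma)$ at the coset $e\Gamma_1$ for $\gamma \in A_1 \subset \Gamma_1$, the cocycle $\widehat b(\gamma)$ evaluated at the trivial coset is (up to the bounded defect $D(b)$ in the weak quasi-cocycle case) exactly $b(\gamma)$, since $\gamma$ fixes the trivial coset. Hence $\|b(\gamma)\|_E \le \|\widehat b(\gamma)\|_{\widehat E} + D(b)$ and $b$ is bounded on $A_1$. Here one needs the three cases (cocycle, quasi-cocycle, weak quasi-cocycle) handled in parallel; these are routine and mirror Proposition \ref{InductionWellDefined} in the much simpler finite-index setting.

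For part (ii), assume $A_1$ is left-syndetic in $A_2$ and $(\Gamma_1, A_1)$ has relative $\mathcal T$; we want $(\Gamma_2, A_2)$ to have relative $\mathcal T$. The plan is the reverse of part (i): given an $L^p$-(quasi-/weak quasi-)cocycle $b$ on $\Gamma_2$, \emph{restrict} it to $\Gamma_1$. The restriction $(\pi|_{\Gamma_1}, b|_{\Gamma_1})$ is still an $L^p$-cocycle (resp.\ quasi-cocycle, weak quasi-cocycle) of the same type with the same defect, so relative $\mathcal T$ of $(\Gamma_1, A_1)$ gives $\sup_{\gamma \in A_1}\|b(\gamma)\| < \infty$. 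Now use left-syndeticity: by Remark \ref{Syndetic} (more precisely, its algebraic content in the countable-group setting) there is a finite set $F \subset A_2^{-1}A_2 \subset \Gamma_2$ with $A_2 \subset A_1 F$. For $a \in A_2$, write $a = a_1 f$ with $a_1 \in A_1$, $f \in F$; then in the weak quasi-cocycle case $\|b(a)\| \le \|b(a_1)\| + \|\pi(a_1)b(f)\| + D(b) = \|b(a_1)\| + \|b(f)\| + D(b)$, and since $F$ is finite, $\sup_{f\in F}\|b(f)\| < \infty$. Hence $b$ is bounded on $A_2$. The cocycle and quasi-cocycle cases are identical (with $D(b) = 0$, resp.\ $D(b) < \infty$).

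**Main obstacle.** The genuinely substantive point is part (i): one must check that finite-index cocycle induction behaves correctly in all three regularity classes and that it interacts well with the \emph{subset} $A_1$ rather than a subgroup. For an honest subgroup this is classical (Shalom-style induction), but here one must verify that the induced object restricted to the finite subset $A_1 \subset \Gamma_1$ recovers $b|_{A_1}$ up to bounded error, rather than recovering something only up to a coboundary — the key being that every $\gamma \in \Gamma_1$ fixes the trivial coset in $\Gamma_2/\Gamma_1$, so no Schreier-cocycle correction term appears at that coset. Part (ii), by contrast, is a purely formal restriction-plus-syndeticity argument and presents no real difficulty; the only care needed is the uniform treatment of the defect $D(b)$ across the three notions of cocycle. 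I would expect to assume the finite-index induction construction as a known special case of (or an easy precursor to) Proposition \ref{InductionWellDefined}.
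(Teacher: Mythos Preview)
Your proposal is correct and follows essentially the same route as the paper: for (i) you induce $b$ from $\Gamma_1$ to $\Gamma_2$ via the finite quotient $\Gamma_2/\Gamma_1$ and read off $b(\gamma)$ from $\widehat b(\gamma)$ at the trivial coset (the paper phrases this as $\nu(X(\lambda,\lambda))\ge [\Gamma_2:\Gamma_1]^{-1}$, which is the same observation), and for (ii) you restrict and use left-syndeticity exactly as the paper does. One small correction: in (i) the evaluation $\widehat b(\gamma)(e\Gamma_1)=b(\gamma)$ is exact even in the wq-case (no $D(b)$ term is needed), and the resulting inequality is $\|b(\gamma)\|_E \le [\Gamma_2:\Gamma_1]^{1/p}\,\|\widehat b(\gamma)\|_{\widehat E}$ rather than the one you wrote.
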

\begin{proof} (i) Let $b: \Gamma_1 \to E$ be an $\mathcal E$-cocycle/quasi-cocycle/wq-cocycle. Since $\Gamma_1$ has finite index in $\Gamma_2$, it is a uniform lattice in $\Gamma_2$, and the invariant measure on $X = \Gamma_2/\Gamma_1$ is the normalized counting measure. Let $\widehat{E} = \ell^p(X; E)$ and denote by $\widehat{b}: \Gamma_2 \to \widehat{E}$ the induced cocycle/quasi-cocycle/wq-cocycle. We will assume that the section used to define the induction satisfies $s(\Gamma_1) = e$.
As in the proof of Proposition \ref{InductionWellDefined} we have for all $g \in \Gamma_2$,
\[\|\widehat{b}(g)\|^p \quad = \quad \sum_{\lambda \in \Gamma_1} \|b(\lambda)\|^p\; \nu(X(g, \lambda)),
\]
where $X(g, \lambda) = \{x \in X \mid \beta(g^{-1}, x)^{-1} = \lambda\}$. Since $\widehat{b}$ is bounded on $A_2 \supset A_1$, we deduce that there exists a constant $C$ such that for all $\lambda \in A_1$
\[
\|b(\lambda)\|^p \; \nu(X(\lambda, \lambda)) \quad \leq \quad \|\widehat{b}(\lambda)\|^p \quad \leq \quad C.
\]
Note that $\beta(\lambda^{-1}, \Gamma_1)^{-1} = s(\Gamma_1)^{-1}\lambda s(\lambda^{-1}\Gamma_1) = \lambda$, hence $X(\lambda, \lambda) \neq \emptyset$, and thus $\nu(X(\lambda, \lambda))  \geq  [\Gamma_2: \Gamma_1]^{-1}$. This shows that $b$ is bounded on $A_1$ and finishes the proof.

(ii) Let $b$ be an $\mathcal E$-cocycle/quasi-cocycle/wq-cocycle on $\Gamma_2$. Then $b|_{A_1}$ is bounded by assumption, and since $A_1$ is left-syndetic in $A_2$, it follows from the  wq-cocycle property that $b$ is bounded on $A_2$.
\end{proof}
Note that while the proof of (ii) also works in the topological setting, discreteness was essential in the proof of (i). The results of this subsection specialize to approximate groups in the obvious way. Note that if $(\Lambda, \Lambda^\infty)$ is an approximate group, then so is $(\Lambda^k, \Lambda^\infty)$ for every $k \in \mathbb N$, and we have:
\begin{corollary} Let $(\Lambda, \Lambda^\infty)$ be an approximate group and $\mathcal T$ be a Kazhdan type property with respect to a class $\mathcal E$ as in Lemma \ref{TUnderFiniteIndex}. Then $(\Lambda, \Lambda^\infty)$ has $\mathcal T$ if and only if  $(\Lambda^k, \Lambda^\infty)$ has $\mathcal T$ for every $k \in \mathbb N$.\qed
\end{corollary}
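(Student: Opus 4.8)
The plan is to observe that only the forward implication requires an argument: the ``if'' direction is immediate by specializing to $k=1$, since $(\Lambda^1, \Lambda^\infty) = (\Lambda, \Lambda^\infty)$.

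For the ``only if'' direction, suppose $(\Lambda, \Lambda^\infty)$ has $\mathcal T$, and let $b: \Lambda^\infty \to E$ be an $L^p$-cocycle, $L^p$-quasi-cocycle, or wq-$L^p$-cocycle (with respect to some Borel map $\pi: \Lambda^\infty \to O(E)$), according to which of the three Kazhdan-type properties $\mathcal T$ is. By hypothesis $b$ is bounded on $\Lambda$, say $\|b(\lambda)\| \leq C$ for all $\lambda \in \Lambda$, and we must show $b$ is bounded on $\Lambda^k$. The key point is the same elementary telescoping estimate used in the proof of Lemma \ref{TUnderFiniteIndex}(ii): since $\pi(g_1) \in O(E)$ is an isometry, the wq-cocycle inequality gives
\[
\|b(g_1 g_2)\| \;\leq\; \|b(g_1)\| + \|\pi(g_1) b(g_2)\| + D(b) \;=\; \|b(g_1)\| + \|b(g_2)\| + D(b)
\]
for all $g_1, g_2 \in \Lambda^\infty$, where $D(b) < \infty$ (and $D(b) = 0$ if $b$ is a genuine cocycle). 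Iterating this bound, for any $\lambda_1, \dots, \lambda_k \in \Lambda$ we get
\[
\|b(\lambda_1 \cdots \lambda_k)\| \;\leq\; \sum_{i=1}^k \|b(\lambda_i)\| + (k-1) D(b) \;\leq\; kC + (k-1)D(b).
\]
Since $\Lambda$ contains the identity by (AG1), every element of $\Lambda^k$ can be written as such a product $\lambda_1 \cdots \lambda_k$ (padding with copies of $e$ if necessary), so $b$ is bounded on $\Lambda^k$ by $kC + (k-1)D(b)$. As this holds for every $b$ of the relevant type, $\Lambda^\infty$ has $\mathcal T$ relative to $\Lambda^k$, i.e.\ $(\Lambda^k, \Lambda^\infty)$ has $\mathcal T$; and since $(\Lambda^k, \Lambda^\infty)$ is an approximate group (as noted before the statement), the claim follows.

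There is no real obstacle here: the argument is a routine triangle-inequality/telescoping computation, and the only things being used are that orthogonal operators are isometries and that $D(b)$ is finite. The one point worth stating explicitly is that the estimate is uniform in $k$ only up to the linear factor $kC + (k-1)D(b)$, which is exactly why one gets boundedness on each $\Lambda^k$ separately rather than on $\Lambda^\infty = \bigcup_k \Lambda^k$ — consistent with the fact that relative Property (F$L^p$) with respect to $\Lambda$ is genuinely weaker than Property (F$L^p$) for $\Lambda^\infty$, as illustrated in Example \ref{ExamplesIntro}.
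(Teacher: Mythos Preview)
Your proof is correct and is essentially the paper's intended argument spelled out in detail: the paper records the corollary as immediate (with a bare \qed) from the preceding elementary lemma, whose part (iv) says that if $(G,A_1)$ and $(G,A_2)$ have relative $\mathcal T$ then so does $(G,A_1A_2)$, and iterating this with $A_1=A_2=\Lambda$ gives relative $\mathcal T$ for $\Lambda^k$. Your telescoping estimate is exactly the content of that lemma part (whose proof the paper also omits as ``straight from the definitions''), so there is no substantive difference in approach.
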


\subsection{The Haagerup Property and its relatives}
The Haagerup Property (also known as Gromov's a-T-menability) is a strong negation of Property (T) in the sense that every lcsc group which enjoys both Property (T) and the Haagerup Property is compact. In analogy with Property (T) (or rather Property (FH)), we can define a number of variants of this property as follows.

Let $G$ be a lcsc group with left-admissible metric $d$, and let $A$ be a subset. We say that an affine action $\rho=(\pi, b)$ of $G$ on a Banach space $E$ is \emph{metrically $A$-proper} if for every $C > 0$ and some (hence any) $v \in E$ the set $\{g \in A \mid \|\rho(g).v\|\leq C \}$ is relatively compact. Choosing $v = 0$ this is equivalent to $b$ being an \emph{$A$-proper cocycle}, i.e., pre-compactness of the sets $\{g \in A\mid \|b(g)\|\leq C\}$ for all $C > 0$. Similarly, a (weak) quasi-cocycle $b$ on $G$ is called $A$-proper if the corresponding sets are pre-compact.
\begin{definition} Let $G$ be a lcsc group and let $A$ be subset and let $\mathcal E$ be a class of Banach spaces.
\begin{enumerate}[(i)]
\item $G$ is a-F$\mathcal E$-menable relative to $A$ if there exists an $L^p$-cocycle on $G$ which is $A$-proper.
\item  $G$ is a-FF$\mathcal E$-menable relative to $A$ if there exists a weak $L^p$-cocycle on $G$ which is $A$-proper.
\item $G$ is a-FFF$\mathcal E$-menable relative to $A$ if there exists a weak $L^p$-quasi-cocycle on $G$ which is $A$-proper.
\end{enumerate}
If $A = G$ then we simply say that $G$ is a-F$\mathcal E$-menable, a-FF$\mathcal E$-menable or a-FFF$\mathcal E$-menable respectively. 
In particular, this defines absolute and relative versions of a-F$L^p$-menability, a-FF$L^p$-menability and a-FFF$L^p$-menability 
\end{definition}
\begin{remark}
\begin{enumerate}[(i)]
\item In the sequel we refer to a-F$\mathcal E$-menability, a-FF$\mathcal E$-menability or a-FFF$\mathcal E$-menability as \emph{Haagerup-type properties} with respect to $\mathcal E$.
\item Every Kazhdan-type property $\mathcal T$ has a dual Haagerup-type property $\mathcal H$ such that if $G$ enjoys both $\mathcal T$ and $\mathcal H$ relative to $A$, then $A$ is pre-compact. 
\item if $d$ denotes a left-admissible metric on $G$, then $A$-properness of a wq-cocycle $b$ is equivalent to the existence of a proper function $\rho: (0, \infty) \to (0, \infty)$ such that for all $x \in A$,
\[
d(x, e) \geq t \quad \Rightarrow \quad \|b(x)\| \geq \rho(t).
\]
Any such function $\rho$ is then called a \emph{control function} for $b$ on $A$.
\end{enumerate}
\end{remark}
\begin{definition} Let $\mathcal H$ be a Haagerup-type property. An approximate group $(\Lambda, \Lambda^\infty)$ is said to have $\mathcal H$ if $\Lambda^\infty$ has $\mathcal H$ relative to $\Lambda$.
\end{definition}

\section{Analytic properties of approximate lattices via induction}

Throughout this section, $\mathcal E$ denotes an $L^p$-closed class of uniformly convex separable Banach spaces for some $1 < p < \infty$. The example we have in mind is the the class of $L^p$-spaces, but no specific properties of $\mathcal E$ will be used.

\subsection{Preservation of Haagerup type properties under induction}\label{SecHaagerup}
In this section we relate analytic properties of strong uniform approximate lattices to analytic properties of the ambient lcsc group using cocycle induction. It turns out that this is rather straight-forward for the Haagerup type properties, but much harder for the Kazhdan type properties, hence we start with the former.

Throughout this subsection let $\Lambda$ be a strong uniform approximate lattice in a lcsc group $G$. We fix a left-admissible metric $d$ on $G$. Given $g \in G$ we abbreviate $\|g\| := d(g, e)$, and we denote by $R > r > 0$ the Delone-parameters of $\Lambda$ with respect to $d$. We then choose a Borel section $s: X_\Lambda \to G$ which takes values in $B_R(e)$ and denote by $\beta = \beta_s$ the associated cocycle. 

\begin{proposition}\label{HaagerupInduction}\label{wHaagerupInduction} Let $(\pi, b)$ be a wq-$\mathcal E$-pair with underlying Banach space $E$, let $\nu$ be a $G$-invariant probability measure $\nu$ on the hull $X_\Lambda$ and let $(\widehat{\pi}, \widehat{b}) := \nu{\rm -Ind}_{(\Lambda, \Lambda^\infty)}^G (\pi, b)$. If $b$ is $\Lambda$-proper, then $\widehat{b}$ is $G$-proper.
\end{proposition}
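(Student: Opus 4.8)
The plan is to unwind the definition of the induced cocycle $\widehat{b}$ and estimate $\|\widehat{b}(g)\|$ from below in terms of $\|g\|$, using the $\Lambda$-properness of $b$ together with the two crucial geometric facts already available: that the fibers $X_\Lambda(g,\lambda)$ partition $X_\Lambda$ for fixed $g$, and that $X_\Lambda(g,\lambda)\neq\emptyset$ forces $\|\lambda\|$ close to $\|g\|$ (Lemma \ref{HaagerupTriangleInequality}). Recall from the proof of Proposition \ref{InductionWellDefined} that
\[
\|\widehat{b}(g)\|^p \;=\; \sum_{\lambda \in \Lambda^2} \|b(\lambda)\|^p\, \nu(X_\Lambda(g,\lambda)),
\]
and that this sum has only finitely many nonzero terms, all with $\|g\|-4R < \|\lambda\| < \|g\|+4R$.

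The first step is to produce, for each $g$, at least one $\lambda$ with $X_\Lambda(g,\lambda)\neq\emptyset$ and with $\nu$-mass bounded below. Since $\bigsqcup_{\lambda}X_\Lambda(g,\lambda) = X_\Lambda$ has total mass $1$, and since by Lemma \ref{HaagerupTriangleInequality} only the $\lambda\in\Lambda^2$ with $\|\lambda\|\in(\|g\|-4R,\|g\|+4R)$ contribute, uniform discreteness of $\Lambda^2$ gives a bound $N$ — depending only on $R$, $r$ and the local geometry of $G$, \emph{not} on $g$ — on the number of such $\lambda$. (Here one uses that a ball of radius $\|g\|+4R$ minus a ball of radius $\|g\|-4R$ is covered by boundedly many balls of radius $r/2$, by properness/local compactness of $d$; this is the standard packing estimate for Delone sets. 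Alternatively one notes the annulus is contained in a translate-independent count via $\beta_s(\{g\}\times X_\Lambda)$ being finite, Lemma \ref{LemmabetaCocycle}, but the packing version makes the bound on $N$ manifestly independent of $g$.) Hence there exists $\lambda = \lambda_g$ with $\nu(X_\Lambda(g,\lambda_g))\geq 1/N$, and consequently
\[
\|\widehat{b}(g)\|^p \;\geq\; \tfrac1N\,\|b(\lambda_g)\|^p, \qquad \|g\|-4R < \|\lambda_g\| < \|g\|+4R.
\]

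The second step converts this into a control function. Let $\rho$ be a control function for $b$ on $\Lambda^2$ (note $\Lambda$-properness of $b$ is equivalent to $\Lambda^2$-properness, since $\Lambda\subset\Lambda^2\subset\Lambda F_\Lambda$ for a finite set and boundedness-of-sublevel-sets is insensitive to enlarging by a finite set — or one applies Remark \ref{RemarkKazhdan}(iv) directly to $\Lambda^2$). If $\|g\| \geq t$, then $\|\lambda_g\| > t - 4R$, so $\|b(\lambda_g)\| \geq \rho(t-4R)$ whenever $t > 4R$, and therefore $\|\widehat{b}(g)\| \geq N^{-1/p}\rho(t-4R)$. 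Thus $\tilde\rho(t) := N^{-1/p}\rho(t-4R)$ (for $t>4R$, and $\tilde\rho(t):=0$ otherwise, say) is a proper function with $\|g\|\geq t \Rightarrow \|\widehat b(g)\|\geq \tilde\rho(t)$, which is exactly $G$-properness of $\widehat{b}$ in the sense of Remark \ref{RemarkKazhdan}(iv). Finally one should double-check that properness is really about all of $G$ and not just $\Lambda^\infty$: but the sublevel set $\{g\in G : \|\widehat b(g)\|\leq C\}$ is contained in $\{g : \|g\| \leq \tilde\rho^{-1}(C)+1\}$, a relatively compact subset of $G$ by properness of $d$, so we are done.

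The main obstacle, such as it is, is the uniformity of the constant $N$: one must be careful that the number of lattice points of $\Lambda^2$ in an annulus of fixed width $8R$ is bounded independently of where the annulus sits, which uses both uniform discreteness of $\Lambda^2$ and a bound on the "doubling"/packing constant of the metric $d$ near infinity. For a general lcsc group with a proper left-invariant metric this packing bound is not automatic at every scale a priori, but it \emph{is} automatic here because $\Lambda^2$ is relatively dense (Delone): in a ball of radius $\|g\|+4R$ the number of $\Lambda$-points and the volume grow comparably, and removing the inner ball leaves a count controlled by the number of $\Lambda$-points needed to $R$-cover the annulus, which by relative denseness together with uniform discreteness is $O_{R,r}(1)$. (If one is worried about the large-$\|g\|$ regime, an even cleaner route avoiding any packing discussion: it suffices to exhibit \emph{any} control function, and for that we can take $\tilde\rho$ to be, e.g., a suitably slowly-growing function — but in fact the $N$ here genuinely is uniform, so the estimate above gives essentially the same growth rate for $\widehat b$ as for $b$.) Everything else is bookkeeping with the identity $\|\widehat b(g)\|^p = \sum_\lambda \|b(\lambda)\|^p\nu(X_\Lambda(g,\lambda))$ and Lemma \ref{HaagerupTriangleInequality}.
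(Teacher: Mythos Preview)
Your overall strategy is sound up to the formula $\|\widehat{b}(g)\|^p = \sum_{\lambda\in\Lambda^2}\|b(\lambda)\|^p \,\nu(X_\Lambda(g,\lambda))$ and the observation that only $\lambda$ with $\|\lambda\| > \|g\|-4R$ contribute, but the pigeonhole step contains a genuine gap. The claim that the number $N$ of points of $\Lambda^2$ in the annulus $\{\,\|g\|-4R < \|\cdot\| < \|g\|+4R\,\}$ is bounded independently of $g$ is false in general: in any group of super-linear volume growth (already $\mathbb R^n$ for $n\geq 2$, and a fortiori free groups, hyperbolic groups, semisimple Lie groups --- precisely the groups where Haagerup/Kazhdan questions are interesting) such an annulus has volume growing with $\|g\|$, and a Delone set has comparably many points in it. Neither properness of $d$, nor uniform discreteness, nor relative density of $\Lambda^2$ yields a uniform packing bound for annuli far from the identity; your ``$O_{R,r}(1)$'' assertion is simply incorrect. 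With $N=N(g)$ unbounded, the inequality $\|\widehat{b}(g)\|\geq N(g)^{-1/p}\rho(\|g\|-4R)$ no longer implies properness, since $\rho$ may grow arbitrarily slowly.

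The fix is to drop the pigeonhole entirely, and this is what the paper does. Since \emph{every} $\lambda$ with $\nu(X_\Lambda(g,\lambda))>0$ already satisfies $\|\lambda\| > \|g\|-4R$ and hence $\|b(\lambda)\|\geq\rho(\|g\|-4R)$, and since the masses $\nu(X_\Lambda(g,\lambda))$ sum to $1$, one obtains directly
\[
\|\widehat{b}(g)\|^p \;=\; \sum_{\lambda} \|b(\lambda)\|^p\,\nu(X_\Lambda(g,\lambda)) \;\geq\; \rho(\|g\|-4R)^p\sum_{\lambda} \nu(X_\Lambda(g,\lambda)) \;=\; \rho(\|g\|-4R)^p,
\]
with no counting needed. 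Your detour through a single ``heavy'' fibre is both unnecessary and, as written, invalid.
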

For the proof we will make use of the following trivial but important observation; recall that $d$ denotes a left-admissible metric on $G$.
\begin{lemma}\label{NoMilnorSchwarz} Let $d'$ be a left-admissible metric on $\Lambda^\infty$ and let $k \in \mathbb N$ and $A \subset \Lambda^k$. Then we have equivalences
\[
A \text{ is bounded w.r.t. }d \;\Leftrightarrow\; A \text{ is bounded w.r.t. }d'  \;\Leftrightarrow\; |A| < \infty. 
\]
\end{lemma}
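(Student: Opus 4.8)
The statement to prove is Lemma~\ref{NoMilnorSchwarz}: for a subset $A \subset \Lambda^k$, being bounded with respect to the ambient metric $d$ on $G$, being bounded with respect to any left-admissible metric $d'$ on $\Lambda^\infty$, and being finite are all equivalent. The key structural facts are that $\Lambda$ is a uniform approximate lattice, so $\Lambda^k$ is again an approximate subgroup of $G$ (a finite union of left-translates of $\Lambda$, by the defining condition $\Lambda^2 \subset \Lambda F_\Lambda$ iterated), hence $\Lambda^k$ is uniformly discrete in $G$ with respect to $d$; and that the enveloping group $\Lambda^\infty$, being countable, carries proper left-invariant metrics $d'$ (word metrics with respect to generating sets, or any left-admissible metric in the discrete-group sense), with respect to which bounded is the same as finite.

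**Main steps.** First I would record that $\Lambda^k$ is an approximate subgroup of $G$: by induction on $k$, using (AG2) one finds a finite set $F_k \subset \Lambda^\infty$ with $\Lambda^k \subset \Lambda F_k$, so $\Lambda^k$ is contained in finitely many $d$-translates of $\Lambda$. Since $\Lambda$ is $r$-uniformly discrete with respect to $d$ and $d$ is proper, any bounded subset of $G$ meets $\Lambda$ in finitely many points, hence meets $\Lambda^k$ in finitely many points; this gives the implication ``$A$ bounded w.r.t.\ $d$ $\Rightarrow$ $|A| < \infty$''. Conversely, a finite set is trivially bounded with respect to any metric, giving ``$|A| < \infty \Rightarrow A$ bounded w.r.t.\ $d$'' and ``$|A| < \infty \Rightarrow A$ bounded w.r.t.\ $d'$''. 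Finally, for any left-invariant proper metric $d'$ on the countable group $\Lambda^\infty$, a $d'$-bounded subset is contained in a $d'$-ball, which is finite by properness, so ``$A$ bounded w.r.t.\ $d'$ $\Rightarrow$ $|A| < \infty$''. Chaining these equivalences closes the loop among all three conditions.

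**Expected obstacle.** Honestly there is no serious obstacle here; the lemma is a bookkeeping statement, which is exactly why the authors call it ``trivial but important.'' The only point requiring a moment of care is the claim that $\Lambda^k$ is uniformly discrete in $G$ — this is not literally part of the definition of an approximate lattice but follows because $\Lambda^2$ (hence $\Lambda^k$) is again a uniform approximate lattice, as is implicitly used already in Lemma~\ref{LemmabetaCocycle} (``if $\Lambda \subset G$ is an approximate lattice, then so is $\Lambda^2$''). One should also be mildly careful that ``left-admissible metric on $\Lambda^\infty$'' means a proper left-invariant metric inducing the discrete topology, so that closed balls are finite; with that reading the properness of $d'$ is immediate. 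With these small clarifications in place the three-way equivalence is a one-paragraph argument.
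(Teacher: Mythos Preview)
Your proposal is correct and follows essentially the same approach as the paper: the paper's proof observes that $\Lambda^\infty$ is discrete so $d'$-bounded sets are exactly the finite ones, and that $\Lambda^k$ is again an approximate lattice, hence uniformly discrete in $G$, so $d$-bounded subsets of $\Lambda^k$ are finite. Your version spells out slightly more (the inclusion $\Lambda^k \subset \Lambda F_k$ to see uniform discreteness), but the argument is the same.
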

\begin{proof} Since $\Lambda^\infty$ is discrete, $d'$-bounded sets are exactly the finite sets. If $\Lambda \subset G$ is an approximate lattice, then so is $\Lambda^k$ for every $k \in \mathbb N$. In particular, $\Lambda^k$ is uniformly discrete, hence intersects every $d$-bounded subset of $G$ in a finite set.
\end{proof}

\begin{proof}[Proof of Proposition \ref{wHaagerupInduction}]
Since the cocycle $b$ is $\Lambda$-proper and $\Lambda$ is syndetic in $\Lambda^2$, it is also $\Lambda^2$-proper. It thus follows from Lemma \ref{NoMilnorSchwarz} applied with $k = 2$ that there exists a proper function $\rho: (0, \infty) \to (0, \infty)$ such that for all $\lambda \in \Lambda^2$,
\begin{equation}\label{ProperHaagerupControl}
\|\lambda\| = d(\lambda, e) \geq t \quad \Rightarrow \quad \|b(\lambda)\| \geq \rho(t).
\end{equation}
Now consider the sets $X_\Lambda(g, \lambda)$ from \eqref{Xglambda}. It follows from \eqref{ProperHaagerupControl} and Lemma \ref{HaagerupTriangleInequality} that for all $g \in G$ and $\lambda \in \Lambda^2$ we have the implication
\begin{equation}
X_\Lambda(g, \lambda) \neq \emptyset \quad \Rightarrow \quad \|\lambda\| \geq \|g\|-4R \quad \Rightarrow \quad \|b(\lambda)\|\; \geq \; \rho(\|g\|-4R).
\end{equation}
Also observe that for fixed $g \in G$ the hull $X_\Lambda$ decomposes as the disjoint union
\[
X_\Lambda = \bigsqcup_{\lambda \in \Lambda^2} X_\Lambda(g, \lambda).
\]
We conclude that 
for all $g \in G$ with $\|g\| \geq 4 R$ we have
\begin{eqnarray*} \|\widehat{b}(g)\|^p &=& \int_{X_{\Lambda}} \|\widehat{b}(g)(P)\|^p \, d\nu(P) \quad = \quad  \int_{X_{\Lambda}} \|b(\beta(g^{-1}, P)^{-1})\|^p \,d\nu(P)\\
&=& \sum_{\lambda \in \Lambda^2} \|b(\lambda)\|^p \nu(X_\Lambda(g, \lambda)) \quad \geq \quad \sum_{\lambda \in \Lambda^2}\rho(\|g\|-2R)^p\nu(X_\Lambda(g, \lambda))\\
&=&\rho(\|g\|-4R)^p \cdot \sum_{\lambda \in \Lambda^2}\nu(X_\Lambda(g, \lambda)) \quad = \quad \rho(\|g\|-4R)^p,
\end{eqnarray*}
whence we obtain
\[
 \|\widehat{b}(g)\| \geq \rho(\|g\|-4R),
\]
showing that $\widehat{b}$ is proper.
\end{proof}

\begin{corollary}\label{CorHaagerupStrong} Let $\Lambda$ be a strong uniform approximate lattice in a lcsc group $G$ and let $\mathcal H$ be a Haagerup-type property with respect to $\mathcal E$.
Then $(\Lambda, \Lambda^\infty)$ has $\mathcal H$ if and only if $G$ has $\mathcal H$.
\end{corollary}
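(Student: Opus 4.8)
The plan is to deduce this directly from Proposition~\ref{wHaagerupInduction} together with Proposition~\ref{InductionWellDefined}; both implications are short, and the only work is bookkeeping about the three flavours of cocycle.

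For the ``only if'' direction, suppose $(\Lambda,\Lambda^\infty)$ has $\mathcal H$, so that $\Lambda^\infty$ carries a wq-$L^p$-pair $(\pi,b)$ of the type appropriate to $\mathcal H$ (an affine $L^p$-action if $\mathcal H$ is a-F$L^p$-menability, a quasi-$L^p$-pair if it is a-FF$L^p$-menability, a general wq-$L^p$-pair if it is a-FFF$L^p$-menability) whose cocycle $b$ is $\Lambda$-proper. Since $\Lambda$ is \emph{strong}, I may fix a $G$-invariant probability measure $\nu$ on $X_\Lambda$ and form $(\widehat{\pi},\widehat{b}):=\nu{\rm -Ind}_{(\Lambda,\Lambda^\infty)}^G(\pi,b)$. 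The three parts of Proposition~\ref{InductionWellDefined} guarantee that $(\widehat{\pi},\widehat{b})$ is again a wq-$L^p$-pair for $G$ of the \emph{same} type, while Proposition~\ref{wHaagerupInduction} gives that $\widehat{b}$ is $G$-proper. Hence $G$ has $\mathcal H$.

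For the ``if'' direction, suppose $G$ has $\mathcal H$, witnessed by a $G$-proper wq-$L^p$-pair $(\pi,b)$ of the corresponding type. Here I would simply restrict along the inclusion $\Lambda^\infty\hookrightarrow G$: the pair $(\pi|_{\Lambda^\infty},b|_{\Lambda^\infty})$ is a wq-$L^p$-pair for the countable discrete group $\Lambda^\infty$ of the same type, since restriction preserves homomorphisms and cocycles and does not increase $D(b)$, and continuity is automatic for discrete groups. It remains to verify $\Lambda$-properness of $b|_{\Lambda^\infty}$, i.e.\ by Lemma~\ref{NoMilnorSchwarz} that $\{g\in\Lambda:\|b(g)\|\le C\}$ is finite for every $C$. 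This set is contained in $\Lambda\cap K$, where $K$ is the compact closure of the relatively compact set $\{g\in G:\|b(g)\|\le C\}$, and $\Lambda\cap K$ is finite because $\Lambda$ is uniformly discrete (cover $K$ by finitely many balls of radius $r/2$, each meeting $\Lambda$ in at most one point). Thus $(\Lambda,\Lambda^\infty)$ has $\mathcal H$.

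I do not expect a genuine obstacle: the whole substance is already in Proposition~\ref{wHaagerupInduction}, and the remaining care is just (i) tracking the correspondence between the cocycle types under induction and restriction, and (ii) recording that on the uniformly discrete set $\Lambda$ the notions ``bounded'', ``relatively compact'' and ``finite'' coincide (Lemma~\ref{NoMilnorSchwarz}).
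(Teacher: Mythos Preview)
Your proof is correct and follows essentially the same route as the paper: the ``only if'' direction via induction (Proposition~\ref{wHaagerupInduction} together with Proposition~\ref{InductionWellDefined} to preserve the cocycle type), and the ``if'' direction via restriction plus the observation that uniformly discrete sets meet compacta in finite sets (Lemma~\ref{NoMilnorSchwarz}). Your write-up is slightly more explicit than the paper's in tracking that induction and restriction preserve the three cocycle flavours, which is a good thing.
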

\begin{proof} Assume that $(\Lambda, \Lambda^\infty)$ is a-$F\mathcal E$-menable. Then there exists a $\Lambda$-proper affine $\mathcal E$-action of $\Lambda^\infty$, and by Proposition \ref{HaagerupInduction} we thus obtain a $G$-proper affine action of $G$ by induction, showing that $G$ is a-$F\mathcal E$-menable. Conversely, it follows from Lemma \ref{NoMilnorSchwarz} (with $k=1$) that if $G$ is a-$F\mathcal E$-menable, then any $G$-proper affine $\mathcal E$-action of $G$ restricts to a $\Lambda$-proper affine $\mathcal E$-action of $\Lambda^\infty$. The argument for the other Haagerup-type properties is the same.
\end{proof}
For instance, the following application of the corollary shows that (as in the group case) neither the a-$L^p$-menability for some $p>2$ nor the a-FFF$H$-menability imply a-F$H$-menability for approximate groups. 
\begin{example}\label{ExHaagerup} Assume that $\Lambda$ is a strong uniform approximate lattice in a rank one Lie group $G$. If $G$ is locally isomorpic to ${\rm SO}(1, n)$ or ${\rm SU}(1,n)$ for some $n \geq 2$, then $G$ and hence $(\Lambda, \Lambda^\infty)$ is a-F$H$-menable \cite{CCJVbook}. If $G$ is of quaternion or octonion type, then $G$ has Property (T) \cite{BHVbook}, hence is not a-F$H$-menable. It is however a-F$L^p$-menable for sufficiently large $p$ (e.g. for $p > 4n+2$ in the case of ${\rm Sp}(n,1)$, see \cite{CTV}). Moreover, as remarked by Ozawa \cite[Remark on p.2]{Oz11}, every countable hyperbolic group is a-FFF$H$-menable. This applies in particular to uniform lattices in $G$ (which exist by Borel--Harish-Chandra). Using both directions of the corollary, we deduce firstly that $G$ is a-FFF$H$-menable, and secondly that every strong uniform approximate lattice in a rank one group is a-FFF$H$-menable.
\end{example}
Corollary \ref{CorHaagerupStrong} is a special case of Theorem \ref{ThmHaagerupIntro} in that we have to assume here that the uniform approximate lattice under consideration is strong. In fact, Theorem \ref{ThmHaagerupIntro} can be deduced from Corollary \ref{CorHaagerupStrong} together with the following simple observation:
\begin{proposition} Let $G$ be a lcsc group and let $\mathcal H$ be a Haagerup property with respect to $\mathcal E$. If $\Lambda \subset \Sigma \subset G$ are uniform approximate lattices and $(\Sigma, \Sigma^\infty)$ has $\mathcal H$, then $(\Lambda, \Lambda^\infty)$ has $\mathcal H$.
\end{proposition}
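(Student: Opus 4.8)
The plan is to prove this by restriction, which is the easy direction of monotonicity: going from the larger approximate lattice $\Sigma$ down to the smaller one $\Lambda$ requires no induction, only restriction of a witnessing cocycle. (The reverse direction, from $\Lambda$ up to $\Sigma$, is genuinely harder and is what cocycle induction is for.)

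First I would record that $\Lambda^\infty$ is a subgroup of $\Sigma^\infty$: since $\Lambda \subseteq \Sigma \subseteq \Sigma^\infty$ and $\Sigma^\infty$ is a group, the subgroup $\Lambda^\infty = \langle \Lambda\rangle$ generated by $\Lambda$ inside $G$ is contained in $\Sigma^\infty$; both are countable discrete groups, $\Lambda$ and $\Sigma$ being uniformly discrete in the $\sigma$-compact group $G$. Unwinding the definition, the assumption that $(\Sigma, \Sigma^\infty)$ has the Haagerup-type property $\mathcal H$ supplies an $L^p$-space $E$ and a pair $(\pi, b)$ for $\Sigma^\infty$ with underlying space $E$ --- a genuine $L^p$-cocycle if $\mathcal H$ is a-F$L^p$-menability, an $L^p$-quasi-cocycle if $\mathcal H$ is a-FF$L^p$-menability, a wq-$L^p$-cocycle if $\mathcal H$ is a-FFF$L^p$-menability --- such that $b$ is $\Sigma$-proper. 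I would then pass to the restriction $(\pi', b') := (\pi|_{\Lambda^\infty}, b|_{\Lambda^\infty})$ on the same space $E$. This is a pair of the same type for $\Lambda^\infty$: the defect satisfies $D(b') \leq D(b)$, so $b'$ is a genuine cocycle when $b$ is; $\pi'$ is a homomorphism whenever $\pi$ is; and continuity is automatic since $\Lambda^\infty$ is discrete.

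It remains to verify that $b'$ is $\Lambda$-proper, which is where the inclusion $\Lambda \subseteq \Sigma$ enters: for each $C > 0$ we have $\{g \in \Lambda \mid \|b(g)\| \leq C\} \subseteq \{g \in \Sigma \mid \|b(g)\| \leq C\}$, and the latter is finite --- it is relatively compact in $\Sigma^\infty$ by $\Sigma$-properness of $b$, hence finite since $\Sigma^\infty$ is discrete (equivalently, it is a bounded subset of the uniformly discrete set $\Sigma \subseteq G$; cf.\ Lemma~\ref{NoMilnorSchwarz}). A subset of a finite set is finite, so every sublevel set of $b'$ over $\Lambda$ is finite, i.e.\ $b'$ is $\Lambda$-proper. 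Therefore $\Lambda^\infty$ has $\mathcal H$ relative to $\Lambda$, which is exactly the statement that $(\Lambda, \Lambda^\infty)$ has $\mathcal H$. I anticipate no real obstacle; the only points needing a modicum of care are lining up the precise flavour of cocycle demanded by $\mathcal H$ and the routine remark that ``relatively compact'' collapses to ``finite'' for subsets of a uniformly discrete set.
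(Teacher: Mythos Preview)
Your proof is correct and follows the same restriction approach as the paper: take the witnessing (wq-/quasi-)cocycle for $\Sigma^\infty$, restrict it to $\Lambda^\infty$, and use $\Lambda\subset\Sigma$ to conclude $\Lambda$-properness from $\Sigma$-properness. Your version is in fact slightly cleaner: the paper additionally invokes that $\Lambda$ is left-syndetic in $\Sigma$ (Remark~\ref{Syndetic}), but as you observe, the bare inclusion $\Lambda\subset\Sigma$ already suffices to push sublevel sets into finite sets.
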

\begin{proof} We prove this for a-F$\mathcal E$-menability, the other properties can be proved similarly. By assumption there exists a $\Sigma$-proper affine $\mathcal E$-action $\rho_0$ of $\Sigma^\infty$. We can restrict this action to an affine $\mathcal E$-action $\rho$ of $\Lambda^\infty$. Since $\Lambda$ and $\Sigma$ are both relatively dense in $G$, $\Lambda$ is left-syndetic in $\Sigma$ by Remark \ref{Syndetic}. If thus follows that $\rho_0$ and hence $\rho$ is also $\Lambda$-proper. This finishes the proof.
\end{proof}

\subsection{Preservation of Kazhdan type properties, I: The trivial direction}
Concerning Kazhdan type properties of approximate lattices we have the following trivial implication:
\begin{proposition} Let $\mathcal T$ be a Kazhdan type property with respect to $\mathcal E$. If $\Lambda$ is a uniform strong approximate lattice in a lcsc group $G$ and $(\Lambda, \Lambda^\infty)$ has $\mathcal T$, then $G$ has $\mathcal T$.
\end{proposition}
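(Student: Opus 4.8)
The plan is to unwind the definition and show directly that every cocycle of the relevant type on $G$ is bounded on all of $G$. So fix $p\in(1,\infty)$, let $\mathcal T$ be one of the properties (F$L^p$), (FF$L^p$) or (FFF$L^p$), and let $(\pi,b)$ be correspondingly an affine $L^p$-action, a quasi-$L^p$-pair, or a wq-$L^p$-pair for $G$ with underlying $L^p$-space $E$; I want to show $b$ is bounded on $G$. First I would restrict everything to the countable subgroup $\Lambda^\infty\subset G$: the pair $(\pi|_{\Lambda^\infty},b|_{\Lambda^\infty})$ is then of the same type for the discrete group $\Lambda^\infty$, with $D(b|_{\Lambda^\infty})\le D(b)$, so the hypothesis that $(\Lambda,\Lambda^\infty)$ has $\mathcal T$ — i.e.\ that $\Lambda^\infty$ has $\mathcal T$ relative to $\Lambda$ — gives a constant $M$ with $\|b(\lambda)\|\le M$ for all $\lambda\in\Lambda$. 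Next, using that $\Lambda$ is $R$-relatively dense and symmetric, write $G=\Lambda K$ with $K:=\overline{B_R(e)}$ compact. For $g\in G$, writing $g=\lambda k$ with $\lambda\in\Lambda$ and $k:=\lambda^{-1}g\in K$, the defining inequality for a wq-cocycle gives
\[
\|b(g)\|\;\le\;D(b)+\|b(\lambda)\|+\|b(k)\|\;\le\;D(b)+M+\sup_{x\in K}\|b(x)\|.
\]
Thus everything reduces to showing that $b$ is bounded on the compact set $K$, i.e.\ that $b$ is locally bounded on $G$.

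For affine $L^p$-actions this is immediate since $b$ is continuous, and for quasi-cocycles it follows after replacing $b$ by a cohomologous continuous quasi-cocycle (which, by the remark following the definition of wq-$L^p$-cocycles, is possible and changes boundedness on $K$ and on $\Lambda$ only by a bounded amount). The point that needs an honest argument is the wq-cocycle case, where $b$ is merely Borel. Here I would argue by a Steinhaus-type trick: put $A_N:=\{g\in G\mid\|b(g)\|\le N\}$, a Borel set with $\bigcup_N A_N=G$; fixing a compact identity neighbourhood $U$ and a Haar measure $\mu$ with $\mu(U)<\infty$, one has $\mu(A_N\cap U)>0$ for $N$ large, so by the Steinhaus--Weil theorem $V:=(A_N\cap U)(A_N\cap U)^{-1}$ is a neighbourhood of $e$; for $v=g_1g_2^{-1}\in V$ with $g_1,g_2\in A_N\cap U$, the wq-inequality applied to $b(g_2g_2^{-1})=b(e)$ together with $\pi(g_2)\in O(E)$ being a linear isometry bounds $\|b(g_2^{-1})\|$, and then the wq-inequality applied to $b(g_1g_2^{-1})$ bounds $\|b(v)\|$, uniformly over $v\in V$. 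Since any compact set is covered by finitely many left-translates $g_1V,\dots,g_mV$, one last application of the wq-inequality, to $b(g_iv)$, then bounds $b$ on that compact set.

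So the structure is: (i) restrict to $\Lambda^\infty$ and invoke the hypothesis to bound $b$ on $\Lambda$; (ii) prove local boundedness of $b$ on $G$; (iii) propagate the bound from $\Lambda$ to $G$ using relative denseness and the wq-inequality. Steps (i) and (iii) are routine; the only place that requires care is step (ii) in the wq-cocycle case, where there is no continuity to fall back on and one needs the Steinhaus argument. I also note that this direction never uses that $\Lambda$ is \emph{strong}; being a uniform approximate lattice is enough.
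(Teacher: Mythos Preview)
Your argument is correct and follows the same strategy as the paper: restrict $(\pi,b)$ to $\Lambda^\infty$, invoke the hypothesis to bound $b$ on $\Lambda$, and then propagate to all of $G$ via relative denseness $G=\Lambda K$ and the (wq-)cocycle inequality. The paper's own proof is terser---it treats only the (F$L^p$) case explicitly, appealing to continuity of $\rho$, and dismisses the remaining cases as ``similar''---so your Steinhaus argument for local boundedness of a merely Borel wq-cocycle actually fills in a detail the paper leaves implicit; your observation that strongness is never used in this direction is also correct and echoes the remark the paper makes immediately after the proposition.
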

\begin{proof} Assume that  $(\Lambda, \Lambda^\infty)$ has Property $(F\mathcal E)$ and let $\rho = (\pi, b)$ be an affine $\mathcal E$-action of $G$. Then $\rho|_{\Lambda^\infty}$ is an affine $\mathcal E$-action of $\Lambda^\infty$, hence $\Lambda$ has a bounded orbit. Since $\Lambda$ is relatively dense in $G$ and $\rho$ is continuous, this implies that $G$ has a bounded orbit. The proof for the other properties is similar.
\end{proof}
\begin{remark} The argument applies more generally to pairs $(\Lambda, \Lambda^\infty)$ where $\Lambda \subset G$ is any subsets which is bi-syndetic in the sense that there exist compact subsets $K, L$ of $G$ such that $G = K\Lambda L$, and $\Lambda^\infty$ denotes its enveloping group. This includes in particular the class of strong non-uniform approximate lattices as introduced in \cite{BH}.
\end{remark}
\begin{problem}\label{ProblemT}  Let $\mathcal T$ be a Kazhdan type property, let $G$ be a lcsc group with $\mathcal T$ and let $\Lambda \subset G$ be a uniform approximate lattice with enveloping group $\Lambda^\infty$. Is it true that $(\Lambda, \Lambda^\infty)$ has $\mathcal T$?
\end{problem}
\begin{remark}
One might expect naively, that at least for strong uniform approximate lattices one can obtain a positive answer to this question by a similar induction argument as in the Haagerup case. However, any attempt in this direction runs into the following problem: Assume e.g.\ that $G$ has (F$L^p$) and let $(\pi, b)$ be an affine $L^p$-action of $\Lambda^\infty$, where $\Lambda \subset G$ is a strong uniform approximate lattice. Choose a $G$-invariant probability measure $\nu$ on $X_\Lambda$ and let $\widehat{b}$ denote the $\nu$-induced $L^p$-cocycle on $G$. The assumption then implies that $\widehat{b}$ is bounded, hence if we define sets $X_\Lambda(g, \lambda)$ as in \eqref{Xglambda}, then as in the proof of Proposition \ref{InductionWellDefined} we obtain for all $g \in G$,
\[
 \sum_{\lambda \in \Lambda^2} \|b(\lambda)\|^p \nu(X_\Lambda(g, \lambda)) \quad = \quad  \|\widehat{b}(g)\|^p  \quad \leq \quad \|\widehat{b}\|_\infty^p.
\]
Thus for all $\lambda \in \Lambda$ and $g \in G$ we have
\[
\|b(\lambda)\| \quad \leq \quad \frac{\|\widehat{b}\|_\infty}{\nu(X_\Lambda(g, \lambda))}.
\]
To conclude, we would have to find for every $\lambda \in \Lambda$ (or at least for sufficiently many such $\lambda$) some $g(\lambda) \in G$ such that $\nu(X_\Lambda(g(\lambda), \lambda) > C$ for some uniform $C > 0$. Even in the case where $\Lambda = \Lambda^\infty$ is a group, this requires some non-trivial argument (see \cite{Oz11}), but it can be done (for $g(\lambda):= \lambda$). In the approximate group case, the cocycle $\beta$ used in the definition of the sets $X_\Lambda(g, \lambda)$ is more complicated, and while we are able to show that $X_\Lambda(\lambda, \lambda)$ is open and non-empty, we are not able to conclude that it has uniformly positive measure in general.
\end{remark}
\subsection{Preservation of Kazhdan type properties, II: The model set case}
In this subsection we provide a positive answer to Problem \ref{ProblemT} in the model set case. Throughout this subsection let $\mathcal T$ be a Kazhdan type property with respect to $\mathcal E$, let $(G, H, \Lambda)$ be a uniform cut-and-project scheme and let $\Lambda = \Lambda(G, H, \Gamma, W)$ be a symmetric regular uniform model set containing the identity with window $W$. We denote by $\Lambda^\infty$ the enveloping group of $\Lambda$. Recall that $(\Lambda, \Lambda^\infty) \cong (\Gamma_W, \Gamma)$, where $\Gamma_W = \Gamma \cap (G \times W)$. In particular, $\mathcal T$ for $(\Lambda, \Lambda^\infty)$ is equivalent to relative $\mathcal T$ of the pair $(\Gamma, \Gamma_W)$.
We are going to show:
\begin{theorem}\label{TInductionModel} 
Let $(\pi, b)$ be a wq-$\mathcal E$-pair for $\Gamma$ with underlying space $E$ and let $(\widehat{\pi}, \widehat{b}) = {\rm Ind}_\Gamma^{G \times H}(\pi, b)$. If $\widehat{b}$ is bounded on $G \times \{e\}$, then $b$ is bounded on $\Gamma_W$.
\end{theorem}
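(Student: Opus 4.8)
The induced pair $(\widehat\pi,\widehat b)=\mathrm{Ind}_\Gamma^{G\times H}(\pi,b)$ lives on $\widehat E=L^p(Y;E)$ with $Y=(G\times H)/\Gamma$, and the hypothesis is exactly the statement one obtains from $G$ having a relative $L^p$-Kazhdan property, via Proposition \ref{ModelModel}. The plan has three steps: (1) upgrade the hypothesis from $G\times\{e\}$ to $G\times W$; (2) reduce the conclusion to a fibrewise estimate over $Y$, which turns into a purely geometric statement about the fundamental domain $\mathcal F$; (3) prove that geometric statement by a strengthening of Ozawa's argument for Property (TTT) — this last step is the hard one.

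\textbf{Step 1.} First I would observe that $\widehat b$ is bounded on every compact subset of $G\times H$: by Lemma \ref{LemmabetaCocycle} the cocycle $\beta_s$ takes only finitely many values on $K^{-1}\times Y$ for $K$ compact, and since $\widehat b(g)(x)=b(\beta_s(g^{-1},x)^{-1})$ this bounds $\|\widehat b(g)\|$ for $g\in K$. In particular $\widehat b$ is bounded on the compact set $\{e\}\times W$. As each $\widehat\pi(g)$ is a linear isometry and $D(\widehat b)\le D(b)<\infty$ (Proposition \ref{InductionWellDefined}), the weak-quasi-cocycle inequality gives $\|\widehat b(g_1g_2)\|\le\|\widehat b(g_1)\|+\|\widehat b(g_2)\|+D(b)$; applying it to $(g,w)=(g,e)\cdot(e,w)$ shows $\widehat b$ is bounded on $G\times W$ by a constant $C$ depending only on $\sup_{G\times\{e\}}\|\widehat b\|$, $\sup_{\{e\}\times W}\|\widehat b\|$ and $D(b)$. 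I would also record two harmless reductions used freely below: replacing $W$ by $W\cup W^{-1}$ only enlarges $\Gamma_W$, so we may assume $W=W^{-1}$; and covering $W$ by finitely many small sets reduces boundedness of $b$ on $\Gamma_W$ to boundedness on each $\Gamma\cap(G\times W_i)$, so we may take $W$ of arbitrarily small diameter. All of this is insensitive to whether $(\pi,b)$ is a cocycle, a quasi-cocycle or a weak quasi-cocycle.

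\textbf{Step 2.} For $g\in G\times W$ and $\gamma\in\Gamma$ set $Z(g,\gamma):=\{x\in Y:\beta_s(g^{-1},x)^{-1}=\gamma\}$. From $\|\widehat b(g)\|^p=\int_Y\|b(\beta_s(g^{-1},x)^{-1})\|^p\,dm_Y(x)\ge\|b(\gamma)\|^p\,m_Y(Z(g,\gamma))$ we see that the theorem follows once we exhibit a constant $c_0>0$ such that every $\gamma\in\Gamma_W$ admits some $g=g(\gamma)\in G\times W$ with $m_Y(Z(g(\gamma),\gamma))\ge c_0$; then $\|b(\gamma)\|\le C\,c_0^{-1/p}$. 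Taking $s$ to be the model-set-compatible section of Proposition \ref{CompatibleSections} and using the explicit almost-everywhere inverse of $\iota$ from Lemma \ref{ModelSetX0Y0} together with the normalisation $s(\pi_Y(p))=p$ on $\mathcal F^o$, a short computation identifies $Z(g,\gamma)$, up to a null set, with $\pi_Y(\mathcal F^o\cap g\,\mathcal F^o\gamma^{-1})$, whence $m_Y(Z(g,\gamma))=(m_G\otimes m_H)(\mathcal F^o\cap g\,\mathcal F^o\gamma^{-1})$. Equivalently, by left-invariance of Haar measure, we must find $g\in G\times W$ for which the two fundamental domains $g^{-1}\mathcal F^o$ and $\mathcal F^o\gamma^{-1}$ overlap in measure at least $c_0$.

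\textbf{Step 3.} This is where the real work lies, and the point at which one genuinely needs $\Lambda$ to be a model set rather than an arbitrary strong uniform approximate lattice. The naive guess $g=\gamma$ is not good enough: it makes the overlap equal to $(m_G\otimes m_H)(\mathcal F^o\cap\gamma\mathcal F^o\gamma^{-1})$, and although conjugation by $\gamma$ distorts $\mathcal F^o$ only boundedly in the $H$-coordinate — as $\gamma_H$ ranges over the compact window $W$, the automorphisms $\Ad(\gamma_H)$ form a precompact family — it may distort it arbitrarily in the $G$-coordinate for a general lcsc group $G$, so that overlap need not be bounded below. Overcoming this requires choosing $g=g(\gamma)\in G\times W$ much more carefully and estimating the overlap in a robust, averaged form, exploiting that the $H$-component of $\gamma$ stays in the compact symmetric window, that $\Gamma_H$ is dense in $H$, and the left-invariance of $m_G$ in the $G$-direction, to extract — by a pigeonhole argument — an element $g(\gamma)$ with $m_Y(Z(g(\gamma),\gamma))\ge c_0$ uniformly over $\Gamma_W$. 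Establishing this uniform overlap bound is the step I expect to be the main obstacle; granting it, Steps 1 and 2 complete the proof.
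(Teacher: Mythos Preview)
Your Step~3 is not a proof; you write ``granting it, Steps 1 and 2 complete the proof'', which is an explicit admission that the hard part is missing. Worse, the claim you need --- that there exists $c_0>0$ such that every $\gamma\in\Gamma_W$ admits some $g(\gamma)\in G\times W$ with $m_Y(Z(g(\gamma),\gamma))\ge c_0$ --- is exactly the kind of ``uniform positive measure for a level set'' statement that the authors flag (see the remark following Problem~\ref{ProblemT}) as the obstruction they could not overcome by direct induction. You correctly identify the conjugation problem: even with unimodularity, the support $\mathcal F^o\gamma(\mathcal F^o)^{-1}$ of $g\mapsto m(\mathcal F^o\gamma\cap g\mathcal F^o)$ can have Haar measure growing with $\gamma$, so although the integral over all $g$ equals $1$, the supremum over $g$ may tend to $0$. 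Your heuristics about ``the $H$-component stays in the compact window'' and ``left-invariance of $m_G$'' do not neutralise this, because $\mathcal F^o$ is not a product set and the $G$- and $H$-coordinates are coupled.

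The paper does \emph{not} try to bound $m_Y(Z(g,\gamma))$ from below. Instead it applies Ozawa's transference principle (Theorem~\ref{ThmOzawa}) to the function $\ell(x,g)=\|b(\beta((g,e)^{-1},x)^{-1})\|+D(b)$: boundedness of $\widehat b$ on $G\times\{e\}$ gives $\int_Y\ell(x,g)\,dm_Y\le C$ uniformly in $g\in G$, and Ozawa's theorem upgrades this to a pointwise bound $\ell(x,g)\le\phi(x)+\phi(g^{-1}x)$ for some $\phi\in L^1(Y)$ and almost every $(g,x)$. Then, via an algebraic identity (Lemma~\ref{ModelTgh}) splitting $(\gamma,\gamma^*)$ as a product of a $G$-piece $\beta((g^\gamma_{x,y},e)^{-1},x)^{-1}$ and an $H$-piece lying in a fixed compact set (since $\gamma^*\in W$), the problem reduces to finding, for each $\gamma\in\Gamma_W$, a pair $(x,y)$ with $(g^\gamma_{x,y},x)$ in the full-measure set where the pointwise bound holds and with $\phi(x),\phi((g^\gamma_{x,y},e)^{-1}x)$ both below a uniform constant; this is achieved by a Fubini/\v{C}eby\v{s}ev argument (Lemma~\ref{LemmaFubini}) exploiting the specific form $(g^\gamma_{x,y},e)^{-1}x=(s_G(y),s_H(x)\gamma^*)\Gamma$. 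Your Steps~1--2 are correct but set up the wrong endgame; to salvage the argument you should replace the level-set lower bound by this Ozawa-plus-\v{C}eby\v{s}ev mechanism.
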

As an immediate consequence we obtain:
\begin{corollary}\label{TInductionModelCor} Let $\mathcal T$ be a Kazhdan type property. If $\Lambda$ is a symmetric uniform model set containing the identity in a lcsc group $G$ and $G$ has $\mathcal T$, then
$(\Lambda, \Lambda^\infty)$ has $\mathcal T$.
\end{corollary}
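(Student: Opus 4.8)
The plan is to read the Corollary off Theorem~\ref{TInductionModel} together with the induction formalism of Section~\ref{affineisom}: essentially all of the substance already sits in Theorem~\ref{TInductionModel}, and what remains is bookkeeping.

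First I would reduce to the case that $\Lambda=\Lambda(G,H,\Gamma,W)$ is a \emph{regular} symmetric uniform model set — this is the standing assumption of the present subsection, and a general symmetric uniform model set containing the identity is reduced to it by enlarging $W$ to a regular symmetric window $W'\supseteq W$ and invoking Lemma~\ref{TUnderFiniteIndex} (note $\Gamma_W\subseteq\Gamma_{W'}$). Using the isomorphism $\pi_G\colon(\Gamma_W,\Gamma)\to(\Lambda,\Lambda^\infty)$ of approximate groups, it then suffices to show that $\Gamma$ has $\mathcal T$ relative to $\Gamma_W$, i.e.\ that every cocycle of the appropriate flavour on $\Gamma$ is bounded on $\Gamma_W$.

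So let $(\pi,b)$ be a wq-$L^p$-pair for $\Gamma$ when $\mathcal T=$ (FFF$L^p$), a quasi-$L^p$-pair when $\mathcal T=$ (FF$L^p$), and an affine $L^p$-action when $\mathcal T=$ (F$L^p$). Since $\Gamma$ is a uniform lattice in $G\times H$, Proposition~\ref{InductionWellDefined} applies to induction from $\Gamma$ to $G\times H$ and shows that $(\widehat\pi,\widehat b):={\rm Ind}_\Gamma^{G\times H}(\pi,b)$ is of the same flavour, now for $G\times H$. Passing to the closed subgroup $G\times\{e\}\cong G$ preserves the flavour, since the bound $\sup_{g_1,g_2}\|b(g_1g_2)-b(g_1)-\pi(g_1)b(g_2)\|<\infty$ and the property of $\pi$ being a homomorphism are both inherited by subgroups; thus $(\widehat\pi|_{G\times\{e\}},\widehat b|_{G\times\{e\}})$ is a wq-$L^p$-cocycle, respectively an $L^p$-quasi-cocycle, respectively an $L^p$-cocycle, on $G$. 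Because $G$ has $\mathcal T$, this restriction is bounded, i.e.\ $\widehat b$ is bounded on $G\times\{e\}$.

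Now Theorem~\ref{TInductionModel} applies verbatim and yields that $b$ is bounded on $\Gamma_W$; since $(\pi,b)$ was an arbitrary pair of the relevant flavour, $\Gamma$ has $\mathcal T$ relative to $\Gamma_W$, which is exactly the assertion that $(\Lambda,\Lambda^\infty)$ has $\mathcal T$. The only non-formal link in this chain is Theorem~\ref{TInductionModel} itself — deducing boundedness of the \emph{original} cocycle on $\Gamma_W$ from boundedness of the \emph{induced} cocycle on $G$ — which, as flagged in the introduction, is delicate even for $p=2$ and relies on the alternative model for induction in Proposition~\ref{ModelModel} together with a strengthening of Ozawa's arguments on the homogeneous space $(G\times H)/\Gamma$.
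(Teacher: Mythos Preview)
Your argument is correct and is precisely the ``immediate consequence'' the paper has in mind: translate to $(\Gamma_W,\Gamma)$, induce to $G\times H$, restrict to $G\times\{e\}$, use $\mathcal T$ for $G$ to get boundedness there, and conclude via Theorem~\ref{TInductionModel}. One small cleanup: the reduction to a regular window does not need Lemma~\ref{TUnderFiniteIndex} at all---since $\Gamma_W\subseteq\Gamma_{W'}$, boundedness of every (wq-)cocycle on $\Gamma_{W'}$ trivially implies boundedness on $\Gamma_W$; in fact the proof of Theorem~\ref{TInductionModel} works directly on $Y=(G\times H)/\Gamma$ and never uses regularity of $W$, so the reduction is not even necessary.
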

\begin{proof} If $\Lambda = \Lambda(G, H, \Gamma, W)$, then $(\Gamma_W, \Gamma)$ has $\mathcal T$ by Theorem \ref{TInductionModel}. Since $(\Lambda, \Lambda^\infty) \cong (\Gamma_W, \Gamma)$, the corollary follows.
\end{proof}
Note that we do not assume here that $\Lambda$ has large window, hence Proposition \ref{ModelIsoModel} may not apply. In the notation of that proposition this means that the two induction schemes
\[\nu{-\rm Ind}_{(\Lambda, \Lambda^\infty)}^G (\pi_0, b_0) \qand \left.\left({\rm Ind}_\Gamma^{G \times H} (\pi, b)\right)\right|_G\]
may be different from each other, in which case we will work with the latter scheme. This scheme is, however, special to the model sets case, and we do not currently know how to give an intrinsic proof of Corollary \ref{TInductionModelCor} based on induction over the hull of $\Lambda$ without reference to an ambient lattice. This is the reason why our approach is currently limited to the model set case (see however Subsection \ref{SecTMeyer} for a slight extension).

Before we prove the theorem, let us mention that Corollary \ref{TInductionModelCor} implies that an approximate group $(\Lambda, \Lambda^\infty)$ can have Property (F$L^p$) for every $p \in (1, \infty)$, even though $\Lambda^\infty$ fails to have Property (F$L^p$) for any $p \in (1, \infty)$:
\begin{example}\label{ExTnoT} 
\begin{enumerate}[(i)]
\item Let $n \geq 2$, $G := {\rm SU}(n,2)$ and $H := {\rm SU}(n+1,1)$. Since $G$ has rank $\geq 2$, it has Property (F$L^p$) for all $p \in (1, \infty)$ by \cite[Thm. B]{BFGM}. On the other hand, $H$ has the Haagerup Property (see e.g. \cite{CCJVbook}), hence fails to have Property (T). It thus follows from \cite[Thm. A]{BFGM} that $H$ does not have Property (T$_{L^p}$) for any \cite[Thm. B]{BFGM}. It then follows from \cite[Thm. 1.3]{BFGM} that $H$, and hence $G\times H$, does not have Property (F$L^p$) for any $p \in (1, \infty)$. Now let $\Gamma < G \times H$ be an irreducible uniform lattice, let $W$ be a $\Gamma$-regular window which is symmetric and contains the identity, and let $\Lambda := \Lambda(G, H, \Gamma, W)$ be the associated regular uniform model set. It then follows from Corollary \ref{TInductionModelCor} that $(\Lambda, \Lambda^\infty)$ has Property (F$L^p$) for every $p \in (1, \infty)$. On the other hand, $\Lambda^\infty$ is isomorphic to the lattice $\Gamma$ in $G \times H$, hence fails to have Property (F$L^p$) for any $p \in (1, \infty)$. (Note that the case $p=2$ can also be deduced from the proof of \cite[Cor. 1.3]{CI}.)
\item Note that if $G$ and $\Lambda$ are as in (i), then $G$ and hence $\Lambda$ also have Property (FF$H$) by \cite{BM1}, while $\Lambda^\infty$ does not (since (FF$H$) would imply (F$H$)).
\end{enumerate} 
\end{example}
On a more anecdotal level we also mention:
\begin{example}\label{ExSp} The group $G := {\rm Sp}(n,1)$ has Property (FH) (see e.g. \cite{BHVbook}), but as we have seen in Example \ref{ExHaagerup} it is also
a-$FL^p$-menable for $p > 4n+2$ and a-FFF$H$-menable. The same then holds for any symmetric regular model set in $G$ containing the identity. This provides examples of approximate groups which have the above three properties simultaneously.
\end{example}

The remainder of this subsection is devoted to the proof of Theorem \ref{TInductionModel}. The main ingredient in the proof is the following theorem established by Narutaka Ozawa in his work on Property (TTT):
\begin{theorem}[{Ozawa, \cite[Thm. C]{Oz11}}]\label{ThmOzawa} Let $G$ be a lcsc group acting measure-preservingly on a standard probability space $(Y, m)$. Let $\ell: Y \times G \to \R_{\geq 0}$ be a measurable function such that for almost all $(x,g_1, g_2) \in Y \times G \times G$,
\begin{equation}\label{OzawaFirst}
\ell(x, g_1g_2) \leq \ell(x,g_1) + \ell(g_1^{-1}x, g_2).
\end{equation}
Assume that there exists $D > 0$ such that for almost all $g \in G$
\begin{equation}\label{OzawaSecond}
\int_Y \ell(x,g) dm(x) < D < \infty.
\end{equation}
Then there exists $\phi \in L^1(Y, m)$ with $\phi \geq 0$ such that for almost all $(g,x) \in G \times Y$,
\begin{equation}
\ell(x,g) \;\leq\; \phi(x) + \phi(g^{-1}x). 
\end{equation}
\qed
\end{theorem}
We now return to the situation of Theorem \ref{TInductionModel}. Concerning the cut-and-project scheme $(G, H, \Gamma)$ we use freely the notations introduced in Remark \ref{ModelSetConventions}. In particular we abbreviate $Y := (G \times H)/\Gamma$ and choose a bounded Borel section $s: Y \to G \times H$ as in the remark. We fix the section $s$ for the rest of this subsection and denote by $\beta = \beta_s$ the associated cocycle.

Given $\gamma \in \Gamma_G$ we denote by $\gamma^*$ the unique element of $\Gamma_H$ such that $(\gamma, \gamma^*) \in \Gamma$. Given $x, y\in Y$ and $(\gamma, \gamma^*) \in \Gamma$ we define elements $g^\gamma_{x,y} \in G$ and $h^\gamma_{x,y} \in H$ by
\begin{equation}\label{DefModelTgh}
s(x)(\gamma, \gamma^*)s(y)^{-1} = (g^\gamma_{x,y}, h^\gamma_{x,y})
\end{equation}
We will establish below the following useful algebraic identities:
\begin{lemma}\label{ModelTgh} 
\begin{enumerate}[(i)]
\item For all $(\gamma, \gamma^*) \in \Gamma$ and $x,y \in Y$we have
\[
(\gamma, \gamma^*) = \beta((s(x)(\gamma, \gamma^*)s(y)^{-1})^{-1}, x)^{-1}.
\]
\item For all $(\gamma, \gamma^*) \in \Gamma$ and all $x,y \in Y$ we have
\[
(g_{x,y}^\gamma, e)^{-1}.x = (s_G(y), s_H(x)\gamma^*).\Gamma = (e, h_{x,y}^\gamma).y.
\]
\end{enumerate}
\end{lemma}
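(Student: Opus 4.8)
The plan is to prove both identities by direct computation in $G\times H$ and in its homogeneous quotient $Y = (G\times H)/\Gamma$, using only two facts: the defining property $s(z)\Gamma = z$ of the Borel section $s\colon Y \to G\times H$ (which, being the definition of a section, holds for \emph{every} $z \in Y$), and the formula $\beta_s(g,z) = s(gz)^{-1}g\,s(z)$ for the associated cocycle. Neither part needs more than careful bookkeeping of group multiplications, so I do not expect a genuine obstacle; the one point requiring attention is to keep the left action of $G\times H$ on $Y$ cleanly separated from the right $\Gamma$-action defining the quotient.

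For part (i), I would put $g := s(x)(\gamma,\gamma^*)s(y)^{-1}$, so that $g^{-1} = s(y)(\gamma,\gamma^*)^{-1}s(x)^{-1}$, and first observe that $g^{-1}$ carries $x$ to $y$: since $s(x)^{-1}x = \Gamma$ and $(\gamma,\gamma^*)^{-1}\Gamma = \Gamma$ one gets $g^{-1}x = s(y)\Gamma = y$. Feeding $g^{-1}x = y$ into the cocycle formula collapses it to $\beta(g^{-1},x) = s(y)^{-1}g^{-1}s(x) = (\gamma,\gamma^*)^{-1}$, and inverting gives the claimed equality. Recognising the identity $g^{-1}x = y$ is the only step here that is not automatic.

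For part (ii), I would first expand \eqref{DefModelTgh} coordinatewise: writing $s(z) = (s_G(z), s_H(z))$ one reads off $g^\gamma_{x,y} = s_G(x)\gamma\,s_G(y)^{-1}$ and $h^\gamma_{x,y} = s_H(x)\gamma^*\,s_H(y)^{-1}$. Then I would compute each of the two outer terms and show it equals the middle one. Starting from $x = (s_G(x),s_H(x))\Gamma$, the element $(g^\gamma_{x,y},e)^{-1}$ sends $x$ to $(s_G(y)\gamma^{-1}, s_H(x))\Gamma$, and right-multiplying this representative by $(\gamma,\gamma^*) \in \Gamma$ rewrites it as $(s_G(y), s_H(x)\gamma^*)\Gamma$. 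Starting instead from $y = (s_G(y),s_H(y))\Gamma$, the element $(e,h^\gamma_{x,y})$ sends $y$ to $(s_G(y), h^\gamma_{x,y}s_H(y))\Gamma = (s_G(y), s_H(x)\gamma^*)\Gamma$, since $h^\gamma_{x,y}s_H(y) = s_H(x)\gamma^*$. As the two outputs coincide with $(s_G(y), s_H(x)\gamma^*)\Gamma$, the chain of equalities in (ii) follows.
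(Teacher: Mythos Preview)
Your proposal is correct and follows essentially the same approach as the paper's own proof: both parts are handled by direct computation using only the section property $s(z)\Gamma = z$ and the cocycle formula $\beta_s(g,z) = s(gz)^{-1}g\,s(z)$, and your key observation $g^{-1}x = y$ in part (i) is exactly what the paper uses as well. The only cosmetic difference is that in part (ii) the paper absorbs $(\gamma,\gamma^*)$ into the coset representative of $x$ before applying $(g^\gamma_{x,y},e)^{-1}$, whereas you apply $(g^\gamma_{x,y},e)^{-1}$ first and then right-multiply by $(\gamma,\gamma^*)$ --- these amount to the same computation.
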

We also need the following lemma, which is essentially a combination of Fubini's theorem and \v{C}eby\v{s}ev's inequality. Here our special choice of section will play an important role.
\begin{lemma}\label{LemmaFubini} 
Let $\Omega \subset G \times Y$ be a conull subset and let $\phi \in L^1(Y, m)$ with $\phi \geq 0$. Then there exists a constant $C > 0$ such that for all $(\gamma, \gamma^*) \in \Gamma \cap (G \times W)$ there exist $x,y \in Y$ such that
\[
(g^\gamma_{x,y}, x) \in \Omega \quad \text{and} \quad \phi(x) \leq C \quad \text{and} \quad \phi((g^\gamma_{x,y}, e)^{-1}x) \leq C.
\]
\end{lemma}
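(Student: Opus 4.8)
The plan is to produce $x,y$ as a \emph{generic} pair in $Y\times Y$: I fix a single threshold $C$, depending only on $\|\phi\|_1$ and on the fixed data of the cut-and-project scheme, and show that for every $(\gamma,\gamma^*)\in\Gamma\cap(G\times W)$ the set of pairs $(x,y)\in Y\times Y$ meeting all three requirements has positive $m_Y\otimes m_Y$-measure, hence is non-empty. I encode the three requirements as the measurable subsets $A_1(\gamma):=\{(x,y)\in Y\times Y:(g^\gamma_{x,y},x)\in\Omega\}$, $A_2(C):=\{(x,y):\phi(x)\le C\}$ and $A_3(C,\gamma):=\{(x,y):\phi((g^\gamma_{x,y},e)^{-1}x)\le C\}$ of $Y\times Y$, and control the measure of each separately.

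For $A_1(\gamma)$ I use that $\Omega$ is conull in $G\times Y$ (with respect to $m_G\otimes m_Y$) together with the fact that the map $\Phi_\gamma\colon(x,y)\mapsto(g^\gamma_{x,y},x)=(s_G(x)\gamma s_G(y)^{-1},x)$ pushes $m_Y\otimes m_Y$ to a measure that is absolutely continuous with respect to $m_G\otimes m_Y$. By a double application of Fubini this reduces to the statement that for $m_Y$-a.e.\ fixed $x$ the law of $s_G(x)\gamma s_G(y)^{-1}$ under $m_Y(dy)$ is absolutely continuous with respect to $m_G$; and this holds because $(s_G)_*m_Y=\rho_G\,m_G$ by Lemma \ref{FMeasures}, while inversion and left translation preserve the class of $m_G$-null sets. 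Hence $A_1(\gamma)$ is conull in $Y\times Y$.

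\v{C}eby\v{s}ev's inequality immediately gives $(m_Y\otimes m_Y)(A_2(C))\ge 1-\|\phi\|_1/C$, so the heart of the matter is the bound on $A_3(C,\gamma)$. Here Lemma \ref{ModelTgh}(ii) identifies $(g^\gamma_{x,y},e)^{-1}x$ with $\pi_Y(s_G(y),s_H(x)\gamma^*)$, and it therefore suffices to bound
\[
I(\gamma^*):=\int_{Y\times Y}\phi\bigl(\pi_Y(s_G(y),s_H(x)\gamma^*)\bigr)\,dm_Y(x)\,dm_Y(y)
\]
uniformly over $\gamma^*\in W$. Since $x$ and $y$ are independent and $s_G$, $s_H$ each depend on a single variable, and since $(s_G)_*m_Y=\rho_G\,m_G$ and $(s_H)_*m_Y=\rho_H\,m_H$ with $\rho_G,\rho_H$ bounded and supported on $\mathcal F_G$, $\mathcal F_H$ (Lemma \ref{FMeasures}), one obtains $I(\gamma^*)\le\|\rho_G\|_\infty\|\rho_H\|_\infty\int_{\mathcal F_G\times\mathcal F_H}\phi(\pi_Y(g,h\gamma^*))\,dm_G(g)\,dm_H(h)$. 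The right translation $h\mapsto h\gamma^*$ costs only a factor bounded by $\sup_{w\in W}(\Delta_H(w)+\Delta_H(w)^{-1})<\infty$, since the modular function of $H$ is bounded on the compact set $W$, and it replaces the domain of integration by the \emph{fixed} compact set $\mathcal F_G\times\mathcal F_HW$. Because $\Gamma\mathcal F=G\times H$ and $\Gamma$ is discrete in $G\times H$, this compact set is covered by finitely many translates $\gamma_1\mathcal F,\dots,\gamma_N\mathcal F$ with $\gamma_i\in\Gamma$; using left-invariance of $m_G\otimes m_H$ together with $G\times H$-invariance of $m_Y$, each such translate contributes at most $\int_Y\phi\,dm_Y=\|\phi\|_1$. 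Altogether $I(\gamma^*)\le D_1$ for a constant $D_1$ depending only on $\|\rho_G\|_\infty$, $\|\rho_H\|_\infty$, $W$, $N$ and $\|\phi\|_1$, and a further application of \v{C}eby\v{s}ev gives $(m_Y\otimes m_Y)(A_3(C,\gamma))\ge 1-D_1/C$.

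Finally I take $C:=1+2\max(\|\phi\|_1,D_1)$, so that $A_2(C)\cap A_3(C,\gamma)$ has measure $\ge 1-\|\phi\|_1/C-D_1/C>0$; intersecting with the conull set $A_1(\gamma)$ leaves a set of positive measure, hence non-empty, and any $(x,y)$ in $A_1(\gamma)\cap A_2(C)\cap A_3(C,\gamma)$ satisfies all three conclusions with the given $C$, which does not depend on $(\gamma,\gamma^*)$. I expect the main obstacle to be the uniform-in-$\gamma^*$ estimate on $I(\gamma^*)$: it is exactly here that the precise properties of the chosen section (via Lemma \ref{FMeasures}) and the compactness of the window $W$ are indispensable — without them the right translation by $\gamma^*$ could not be absorbed into a finite constant.
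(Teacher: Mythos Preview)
Your proof is correct and takes essentially the same approach as the paper: the same three-set decomposition $A_1,A_2,A_3$, Fubini together with Lemma~\ref{FMeasures} for $A_1$, \v{C}eby\v{s}ev for $A_2$, and Lemma~\ref{ModelTgh}(ii) plus Lemma~\ref{FMeasures} plus a finite cover of the fixed compact $\mathcal F_G\times\mathcal F_H W$ by $\Gamma$-translates of $\mathcal F$ for $A_3$ (the paper merely packages this last step by first lifting $\phi$ to a compactly supported $\widetilde\phi\in L^1(G\times H)$). One minor remark: since $G\times H$ contains a lattice it is unimodular, hence so is $H$, so your modular-function factor is identically~$1$ --- the paper uses this implicitly in its bound $\|h_\gamma\|_1\le\|\rho_G\|_\infty\|\rho_H\|_\infty\|\widetilde\phi\|_1$.
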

Before we prove the two lemmas, let us explain how they imply the theorem:
\begin{proof}[Proof of Theorem \ref{TInductionModel} modulo Lemma \ref{ModelTgh} and Lemma \ref{LemmaFubini}] 
Our arguments roughly decompose as follows: We first show that there exists a constant $C'$ such that
\[
\|b(\gamma,\gamma^*)\| \leq \|b((g^{\gamma}_{x,y},e)^{-1},x)\| + C', \quad \textrm{for all $x, y \in Y$ and $(\gamma, \gamma^*) \in \Gamma_W$},
\]
where $g_{x,y}^\gamma$ is defined in \eqref{DefModelTgh}.
Once this has been established, we verify that Ozawa's Theorem can be applied to the function $\ell(x, g) := \|b(\beta((g,e)^{-1},x)^{-1})\| + C'$, which in turn implies that there exist $\phi \in L^1(Y,m)$ and a conull subset $\Omega \subset G \times Y$ such that
\[
\|b(\beta((g,e)^{-1},x)^{-1})\| \leq \phi(x) + \phi((g,e)^{-1}x), \quad \textrm{for all $(g,x) \in \Omega$}.
\]
Now, by Lemma \ref{LemmaFubini}, there exists a constant $C$ such that for all $(\gamma,\gamma^*) \in \Gamma_W$, there are 
$x, y \in Y$ which satisfy
\[
(g^\gamma_{x,y}, x) \in \Omega, \quad \phi(x) \leq C \quad \text{and} \quad \phi((g^\gamma_{x,y}, e)^{-1}x) \leq C,
\]
whence, $\|b(\beta((g^{\gamma}_{x,y},e)^{-1},x))\| \leq 2C$, and thus $\|b(\gamma,\gamma^*)\| \leq 2C + C'$. Since the constants 
$C$ and $C'$ are independent of $x$ and $y$, we conclude that $b |_{\Gamma_W}$ is bounded, which finishes the proof. \\

Let us now turn to the details. By Lemma \ref{ModelTgh}  we have for all $x,y \in Y$ and $(\gamma, \gamma^*) \in \Gamma$,
\begin{eqnarray*}
(\gamma, \gamma^*) &=&  \beta((s(x)(\gamma, \gamma^*)s(y)^{-1})^{-1}, x)^{-1} \quad = \quad \beta((g^\gamma_{x,y}, h^\gamma_{x,y})^{-1}, x)^{-1}\\
&=& \beta\left((e, h^\gamma_{x,y})^{-1}(g^\gamma_{x,y},e)^{-1}, x\right)^{-1}\\
&=& \beta((g^\gamma_{x,y},e)^{-1}, x)^{-1} \beta((e,h^{\gamma}_{x,y})^{-1}, (g^\gamma_{x,y},e)^{-1}x)^{-1}\\
&=&  \beta((g^\gamma_{x,y},e)^{-1}, x)^{-1} \beta((e,h^{\gamma}_{x,y})^{-1}, (e,h^{\gamma}_{x,y}).y) ^{-1}\\
&=&  \beta((g^\gamma_{x,y},e)^{-1}, x)^{-1} \beta((e,h^{\gamma}_{x,y})^{-1}, y),
\end{eqnarray*}
where the last equality follows from \eqref{betaInverse}. In particular, we deduce from the wq-Property and the fact that $\pi$ takes values in isometries that
\[
\|b(\gamma, \gamma^*)\| \leq  \|b(\beta((g^\gamma_{x,y},e)^{-1}, x)^{-1})\| + \|b( \beta((e,h^{\gamma}_{x,y})^{-1}, y))\| + D(b),
\]
where $D(b)$ is defined in \eqref{def_Db}. Now assume that $(\gamma, \gamma^*) \in \Gamma_W := \Gamma \cap (G \times W)$. Then
\[
h^\gamma_{x,y} = s_H(x)\gamma^*s_H(y) \in \pi_H(\mathcal F)W\pi_H(\mathcal F),
\]
which is a compact subset of $H$. It thus follows from the last assertion of Lemma \ref{LemmabetaCocycle} that there exists a constant $C_0 > 0$ such that for all $h \in  \pi_H(\mathcal F)W\pi_H(\mathcal F)$ and $y \in Y$
\[
\|b(\beta((e,h), y))\| \leq C_0,
\]
and hence
\begin{equation}\label{PreOzawa}
\|b(\gamma, \gamma^*)\| \; \leq \; \|b(\beta((g^\gamma_{x,y},e)^{-1}, x)^{-1})\| + C_0 +D(b)
\end{equation}
for all $x, y \in Y$ and $(\gamma, \gamma^*) \in \Gamma_W$. To estimate the first term on the right-hand side of the inequality we are going to apply Ozawa's theorem to the function
\[
 \ell: Y \times G \to \R_{\geq 0}, \quad \ell(x, g) := \|b(\beta((g,e)^{-1},x)^{-1})\| + D(b).
\]
We need to check that this function satisfies Conditions \eqref{OzawaFirst} and \eqref{OzawaSecond} from Theorem \ref{ThmOzawa}. Set
\[
C_1 := \sup_{g \in G}\|\widehat{b}(g,e)\|_p < \infty.
\]
Since for all $g \in G$ and $x \in Y$ we have $\widehat{b}(g,e)(x) = b(\beta((g,e)^{-1}, x)^{-1})$, we deduce 
that
\[
\int_Y \|b(\beta((g,e)^{-1}, x)^{-1})\| dm(x)  \quad \leq \quad
\left(\int_Y \|b(\beta((g,e)^{-1}, x)^{-1})\|^p dm(x) \right)^{1/p}  \quad \leq \quad C_1.
\]
This implies that
\begin{equation*}\label{OzawaApplies}
\int_Y \ell(x,g) dm(x) \quad \leq \quad C_1 + D(b) \quad < \quad \infty,
\end{equation*}
hence \eqref{OzawaSecond} holds with $D := C_1 + D(b)$. We also observe that
\begin{eqnarray*}
\ell(x,g_1g_2) &=&   \|b(\beta((g_1g_2,e)^{-1},x))^{-1}\| +D(b)\\
&=&\|b(\beta((g_1,e)^{-1},x)^{-1}\beta((g_2, e)^{-1}, (g_1, e)^{-1}.x^{-1})^{-1}\| +D(b)\\
&\leq& \|b(\beta((g_1,e)^{-1},x)^{-1})\| + \|b(\beta((g_2, e)^{-1}, (g_1, e)^{-1}x^{-1})^{-1})\| + 2D(b)\\
&\leq& \ell(x, g_1) + \ell(g_1^{-1}x, g_2),
\end{eqnarray*}
hence \eqref{OzawaFirst} holds, and Theorem \ref{ThmOzawa} applies indeed to the function $\ell$. We thus deduce that there exists a conull set $\Omega \subset G \times Y$ and a function $\phi \in L^1(Y, m)$ 
such that for all $(g, x) \in \Omega$,
\begin{equation}\label{DefOmega0}
\|b(\beta((g,e)^{-1},x)^{-1})\| \quad \leq \quad \phi(x) + \phi((g,e)^{-1}x).
\end{equation}
Now fix $(\gamma, \gamma^*) \in \Gamma_W$ and let $C$ be the constant from Lemma \ref{LemmaFubini}. By the same lemma, there exist $x,y \in Y$ such that $(x,g^\gamma_{x,y}) \in \Omega$, $\phi(x) \leq C$ and $\phi(g^\gamma_{xy},e)^{-1}x) \leq C$. Now by \eqref{PreOzawa} and \eqref{DefOmega0} this implies that 
\[
\|b(\gamma, \gamma^*)\| \; \leq\;  \|b(\beta(x, (g^\gamma_{xy}, e)))\| + C_0 \; \leq \; \phi(x) + \phi((g^\gamma_{xy}, e)^{-1}x) +C_0 \; \leq \; 2C + C_0.
\]
Since $(\gamma, \gamma^*) \in \Gamma_W$ was chosen arbitrarily, this shows that $b$ is bounded on $\Gamma_W$ and thereby finishes the proof.
\end{proof}
The rest of this subsection is devoted to the proof of the lemmas used in the proof of this theorem.
\begin{proof}[Proof of Lemma \ref{ModelTgh}] (i) Since $\beta(g^{-1},x) ^{-1} = s(x)^{-1}gs(g^{-1}x)$ we have
\begin{eqnarray*}
\beta((s(x)(\gamma, \gamma^*)s(y)^{-1})^{-1}, x)^{-1} &=& s(x)^{-1}(s(x)(\gamma, \gamma^*)s(y)^{-1})s(s(y)(\gamma, \gamma^*)s(x)^{-1}x)\\
&=& (\gamma, \gamma^*) s(y)^{-1}s(s(y) (\gamma, \gamma^*)\Gamma)\\
&=&  (\gamma, \gamma^*) s(y)^{-1}s(y) \quad \quad = \quad (\gamma, \gamma^*).
\end{eqnarray*}
(ii)  We have
\begin{eqnarray*}
(s_G(x)\gamma s_G(y)^{-1}, s_H(x)\gamma^* s_H(y)^{-1}) = s(x)(\gamma, \gamma^*)s(y)^{-1} = (g^\gamma_{x,y}, h^\gamma_{x,y}),
\end{eqnarray*}
and hence
\begin{eqnarray*}
(g_{x,y}^\gamma, e)^{-1}.x &=& (s_G(y)\gamma^{-1} s_G(x)^{-1}, e).(s_G(x), s_H(x))(\gamma, \gamma^*)\Gamma\\
&=& (s_G(y), s_H(x)\gamma^*)\Gamma\\
&=& (e, s_H(x)\gamma^* s_H(y)^{-1})(s_G(y), s_H(y))\Gamma\\
&=& (e, h^\gamma_{x,y}).y,
\end{eqnarray*}
which establishes $(g_{x,y}^\gamma, e)^{-1}.x = (s_G(y), s_H(x)\gamma^*).\Gamma = (e, h_{x,y}^\gamma).y$.
\end{proof}
\begin{proof}[Proof of Lemma \ref{LemmaFubini}] Throughout the proof we fix $(\gamma, \gamma^*) \in \Gamma \cap (G \times W)$. For every $C >0$ we now define sets
\[
\Omega_1^\gamma := \{(x,y) \in Y \times Y \mid  (g^\gamma_{x,y}, x) \in \Omega\} \quad \text{and} \quad \Omega_2^C := \{(x,y) \in Y \times Y \mid \phi(x) \leq C\}
\]
and
\[
\Omega_3^{\gamma, C} := \{(x,y) \in Y \times Y \mid 
\phi((g^\gamma_{x,y}, e)^{-1}x) \leq C\}.
\]
It suffices to show that for some $C>0$ independent of $\gamma$
the intersection
\[
\Omega_{\gamma, C} := \Omega_1^\gamma \cap \Omega_2^C \cap \Omega_3^{\gamma, C},
\]
has positive measure (hence is non-empty).

As far as $\Omega_1^\gamma$ is concerned, since $\Omega$ is a conull subset of $G \times Y$, Fubini's theorem implies that there exists a conull subset $Y_0 \subset Y$ such that for all $x \in Y_0$ the set
\[
\Omega_x := \{g \in G\mid (g, x) \in \Omega\}
\]
is a conull set in $G$. Now fix $x \in Y_0$ and let $y \in Y$. By definition we have the equivalences
\[
(x,y) \in \Omega_1^\gamma \quad \Leftrightarrow \quad s_G(x)\gamma s_G(y)^{-1} \in \Omega_x \quad \Leftrightarrow \quad s_G(y) \in \Omega_x^{-1}s_G(x)\gamma.
\]
Now for every fixed $x \in Y_0$ the set $\Omega_x^{-1}s_G(x)\gamma$ is a conull set in $G$. This shows that
\begin{equation}\label{Omega1Ozawa}
(m_Y \otimes m_Y)(\Omega_1^\gamma) = 1.
\end{equation}
As far as $\Omega_2^C$ is concerned, \v{C}eby\v{s}ev's inequality yields
\[
(m_Y \otimes m_Y)(\Omega_2^C)  \;=\; 1-m_Y(\{x \in Y \mid  \phi(x) > C\}) \;\geq\; 1-\frac{1}{C}\|\phi\|_1,
\]
hence if $\varepsilon > 0$ and $C >\|\phi_1\|/\varepsilon$, then 
\begin{equation}\label{Omega2Ozawa}
(m \otimes m)(\Omega_2^C) > 1-\varepsilon.
\end{equation}

To estimate the measures of the sets $\Omega_3^{\gamma, C}$ we define a compact subset $K \subset G \times H$ by $K := \pi_G(\mathcal F) \times \pi_H(\mathcal F) W$ and define 
\[
\widetilde{\phi}: G \times H \to \R_{\geq 0}, \quad \widetilde{\phi}(g,h) := \chi_K(g,h) \cdot \phi((g,h)\Gamma).
\]
Since $K \subset G \times H$ is compact, it can be covered by finitely many translates of $\mathcal F$, and since $\phi \in L^1(Y, m)$ we deduce that $\widetilde{\phi} \in L^1(G \times H, m_G \otimes m_H)$. We also define a function
\[
h_\gamma: Y \times Y, \quad (x,y) \mapsto \widetilde{\phi}(s_G(y), s_H(x)\gamma^*) 
\]
Since by Lemma \ref{ModelTgh} we have $(g^\gamma_{x,y}, e)^{-1}x =  (s_G(y), s_H(x)\gamma^*).\Gamma$ and since for all $x,y \in Y$ we have $(s_G(y), s_H(x)\gamma^*) \in K$ we deduce that
\[
\Omega_3^{\gamma, C} = \{(x,y) \in Y \times Y \mid h_\gamma(x,y) \leq C\}.
\]
We observe that, in the notation of Lemma \ref{FMeasures},
\begin{eqnarray*}
\|h_\gamma\|_1 &=&\int_{Y \times Y} \widetilde{\phi}(s_G(y), s_H(x)\gamma^*) dm_Y(y) dm_Y(x)\\
&=& \int_{G} \int_{H} \widetilde{\phi}(g, h\gamma^*) d\mu_H(h) d\mu_G(g)\\
&=& \int_{\mathcal F_G} \int_{\mathcal F_H} \widetilde{\phi}(g, h\gamma^*)  \rho_G(g) \rho_H(h) dm_H(h) dm_G(g)\\
&\leq& \|\rho_G\|_\infty \|\rho_H\|_\infty \|\widetilde{\phi}\|_1 \quad=: \quad C_0,
\end{eqnarray*}
in particular the functions $h_\gamma$ are contained in $L^1(Y \times Y)$ and have uniformly bounded $L^1$-norm. Another application of \v{C}eby\v{s}ev's inequality thus yields
\[
(m_Y  \otimes m_Y)(\Omega_3^{\gamma, C}) \quad \geq \quad 1- \frac{C_0}{C},
\] 
which for $C > C_0/\varepsilon$ yields
\begin{equation}\label{Omega3Ozawa}
(m_Y  \otimes m_Y)(\Omega_3^{\gamma, C}) \quad > \quad 1 - \varepsilon.
\end{equation}
Combining \eqref{Omega1Ozawa}, \eqref{Omega2Ozawa} and \eqref{Omega3Ozawa} we deduce that for every $\epsilon > 0$ there exists $C > 0$ (independent of $\gamma$) such that
\[
(m_Y \otimes m_Y)(\Omega_{\gamma, C}) > 1 - 2\varepsilon,
\]
hence $\Omega_{\gamma, C} \neq \emptyset$.
\end{proof}

\subsection{Preservation of Kazhdan type properties, III: The Meyer case}\label{SecTMeyer}
In the last subsection we have established Part (1) of Theorem \ref{KazhdanType} in Corollary \ref{TInductionModelCor}. The remaining part of the theorem can be deduced from Part (1) using the results of the appendix:
\begin{proof}[Proof of Theorem \ref{KazhdanType}(2)] Let $G$ be a lcsc group satisfying a Kazhdan type Property $\mathcal T$ and let $\Lambda \subset G$ be a Meyer set, which is a uniform lattice and contained in a model set $\Sigma$ of almost connected type. By Part (1) of Theorem \ref{KazhdanType}, the approximate group $(\Sigma, \Sigma^\infty)$ has $\mathcal T$. It then follows from Corollary \ref{MeyerTrick} that also $(\Lambda, \Lambda^\infty)$ has $\mathcal T$.
\end{proof}
This finishes the proof of Theorem \ref{KazhdanType}

\newpage
\appendix

\section{Structure theory of Meyer sets}

Let $(G, H, \Gamma)$ be a uniform cut-and-project scheme and let $W \subset H$ be a subset with dense interior. Recall that the set
\[
\Lambda := \Lambda(G, H, \Gamma, W) := \pi_G(\Gamma \cap (G \times W))
\]
is called a \emph{uniform model} set with \emph{window} $W$. If $W$ is symmetric and contains the identity, then it is a uniform approximate lattice in $G$.
By Remark \ref{Syndetic} every symmetric subset $\Delta < \Lambda$ containing the identity, which is relatively dense in $G$ (equivalently, left-syndetic in $\Lambda$), is then also a uniform approximate lattice in $G$.
\begin{definition} A left-syndetic subset of a model set is called a \emph{Meyer set}.
\end{definition}

If $G$ is abelian, then every uniform approximate lattice in $G$ is a symmetric Meyer set \cite{Meyerbook, Moody}. As for general $G$, it is currently not known whether there exist any uniform approximate lattices which are not symmetric Meyer sets.

\begin{remark} In the definition of a model set, we can always assume that $W$ generates $H$, for otherwise we can replace $H$ by the group generated by $W$ without changing $\Lambda$. Similarly, in the definition of a Meyer set, we may always assume that the ambient model set is symmetric, regular and contains the identity, hence is a strong approximate lattice. Indeed, this can be achieved by simply enlarging the window. In the sequel, when constructing model sets/Meyer sets, we will always assume tacitly that $H$ and $W$ are chosen in this way.
\end{remark}
\begin{definition} Let $\Lambda = \Lambda(G, H, \Gamma, W)$ be a uniform model set and $\Delta \subset \Lambda$ be a Meyer set.
We say that $\Lambda$ and $\Delta$ are of \emph{connected Lie type} if $H$ is a connected Lie group. We say that $\Lambda$ and $\Delta$ are of \emph{almost connected type} if $H$ is almost connected, i.e., compact-by-connected.
\end{definition}
The difference between almost connected type and connected Lie type is rather small:
\begin{proposition} Let $G$ be a lcsc group. If $\Lambda$ is a uniform model set of almost connected type in $G$, then there exists a uniform model set $\Lambda'$ of connected Lie type in $G$ and a finite subset $F\subset \Lambda^{-1}\Lambda$ such that 
\[
\Lambda \subset \Lambda' \subset \Lambda F
\]
In particular, every Meyer set of almost connected type is of connected Lie type.
\end{proposition}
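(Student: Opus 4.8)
The plan is to prove the proposition by two successive reductions — first to the case where $H$ is a Lie group with finitely many connected components, then to the connected case at the cost of finitely many left‑translates — and finally to glue these translates back into a single model set; the ``in particular'' clause will drop out of Remark~\ref{Syndetic}. For the first reduction I would invoke the Gleason--Yamabe theorem to pick a compact normal subgroup $K\trianglelefteq H$ with $H/K$ a Lie group. Since $H$ is compact‑by‑connected so is $H/K$, and a Lie group has discrete (hence, when compact, finite) component group, so $H/K$ has finitely many components. Because $\pi_G|_\Gamma$ is injective we have $\Gamma\cap(\{e\}\times K)=\{e\}$, so $\Gamma$ maps isomorphically onto a subgroup $\bar\Gamma\le G\times(H/K)$; using compactness of $K$ one checks that $\bar\Gamma$ is again a uniform lattice projecting injectively to $G$ and densely to $H/K$. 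Writing $q\colon H\to H/K$, the set $q(W)$ is compact with dense interior and $W\subseteq q^{-1}(q(W))$, so $\Lambda=\tau^{-1}(W)\subseteq\Lambda(G,H/K,\bar\Gamma,q(W))$, and this larger model set lies at bounded distance from $\Lambda$ (it comes from a slight enlargement of the window), hence is commensurable with $\Lambda$ by Remark~\ref{Syndetic}. Thus I may assume henceforth that $H$ is a Lie group with finitely many connected components.

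\textbf{Second reduction.} Put $Z:=H^{\circ}$, an open connected subgroup of finite index $n:=[H:Z]$, and $\Gamma_0:=\Gamma\cap(G\times Z)$. Since $G\times Z$ is a clopen finite‑index subgroup of $G\times H$, the subgroup $\Gamma_0$ has finite index in $\Gamma$, and $\Gamma_0\backslash(G\times Z)$ embeds as a clopen (hence compact) subspace of $\Gamma\backslash(G\times H)$; so $\Gamma_0$ is a uniform lattice in $G\times Z$ projecting injectively to $G$ and (as $Z$ is open) densely to $Z$. Choose coset representatives $\delta_1=e,\delta_2,\dots,\delta_n\in\Gamma$ for $\Gamma_0$, and set $c_j:=\pi_G(\delta_j)$ and $V_j:=\pi_H(\delta_j)^{-1}W\cap Z$, a compact subset of $Z$ with dense interior. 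From the coset decomposition one gets $\Gamma\cap(G\times W)=\bigsqcup_{j}\delta_j\bigl(\Gamma_0\cap(G\times V_j)\bigr)$, hence $\Lambda=\bigcup_{j=1}^{n}c_j\,\Lambda(G,Z,\Gamma_0,V_j)$. Putting $V:=\{e\}\cup\bigcup_{j}(V_j\cup V_j^{-1})$, the set $\Lambda'':=\Lambda(G,Z,\Gamma_0,V)$ is a symmetric uniform model set of connected Lie type containing the identity, and $\Lambda\subseteq\bigcup_{j=1}^{n}c_j\,\Lambda''$.

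\textbf{Gluing the translates (the main obstacle) and conclusion.} The remaining step — replacing the finite union $\bigcup_j c_j\Lambda''$ by a single uniform model set $\Lambda'$ of connected Lie type with $\Lambda\subseteq\Lambda'$ — is where the difficulty lies. One cannot simply adjoin a finite (or finite‑by‑compact) factor to $Z$, because a lattice can never project densely onto a connected internal factor that it meets in only finitely many points; the finite quotient $H/Z$ has to be ``spread out''. Concretely, I would try to enlarge $Z$ to a connected Lie group $H_1\supseteq Z$ in which the finite‑order twisting automorphisms $\phi_j:=\Ad(\pi_H(\delta_j))|_Z$ of $Z$ become realized by elements $t_j\in H_1$, extend $\Gamma_0\hookrightarrow G\times Z$ to a homomorphism $\Gamma\to G\times H_1$ with $G$‑component $\pi_G|_\Gamma$ and $\delta_j\mapsto(c_j,t_j)$ (after the $Z$‑valued correction forced by the relation controlling $\delta_j^2$), and then show that the image $\Gamma_1$ is a uniform lattice in $G\times H_1$ whose projection to $H_1$ is \emph{dense} — this density statement, for a well‑chosen $H_1$, is the crux of the whole argument. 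Granting it, $\Lambda':=\Lambda(G,H_1,\Gamma_1,W_1)$ for a compact window $W_1$ with dense interior slightly larger than $W$ contains $\Lambda$ and lies at bounded distance from it, so Remark~\ref{Syndetic} yields a finite $F\subseteq\Lambda^{-1}\Lambda$ with $\Lambda\subseteq\Lambda'\subseteq\Lambda F$. The ``in particular'' is then immediate: a Meyer set $\Delta$ of almost connected type is left‑syndetic in some model set $\Lambda$ of almost connected type, hence relatively dense and contained in the Delone set $\Lambda'$, and so left‑syndetic in $\Lambda'$ by Remark~\ref{Syndetic}; thus $\Delta$ is of connected Lie type.
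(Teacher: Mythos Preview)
Your first reduction is essentially the paper's entire proof. The paper invokes \cite[p.~175]{MZ55} on the almost connected group $H$ to produce a compact normal subgroup $V\lhd H$ such that $L:=H/V$ is a \emph{connected} Lie group (not merely a Lie group with finitely many components), then pushes the cut-and-project data down to $(G,L,\Gamma_L)$ exactly as you do in step~1, sets $\Lambda':=\Lambda(G,L,\Gamma_L,\pi_L(W_0))=\Lambda(G,H,\Gamma,W_0V)$, and concludes $\Lambda\subset\Lambda'\subset\Lambda F$ via Remark~\ref{Syndetic} since both are model sets for the same scheme $(G,H,\Gamma)$ with nested compact windows $W_0\subset W_0V$. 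That is the whole argument; your steps~2 and~3 become unnecessary once the quotient is taken to be connected from the outset.

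As it stands, your proposal has a genuine gap in step~3, which you yourself flag. The construction you sketch --- realizing the conjugation automorphisms $\phi_j=\Ad(\pi_H(\delta_j))|_Z$ by elements $t_j$ of some enlarged connected Lie group $H_1\supset Z$ and extending $\Gamma_0\hookrightarrow G\times Z$ to a homomorphism $\Gamma\to G\times H_1$ --- comes with no mechanism ensuring that the image projects \emph{densely} onto $H_1$: if $H_1$ is made large enough to accommodate the lifts $t_j$, the image of $\Gamma$ will typically land in a proper closed subgroup, and there is also no argument that the image is again a uniform lattice. So the enlargement route through steps~2--3 does not close, whereas the paper sidesteps the whole issue by quotienting rather than enlarging.
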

\begin{proof} Clearly the first statement implies the second.  As for the first statement, let $\Lambda = \Lambda(G, H, \Gamma, W_0)$ with $H$ almost connected. Then by \cite[p. 175]{MZ55} there exist a compact normal subgroup $V < H$ such that $L := H/V$ is a connected Lie group and $\{e\} \times V$ intersects $\Lambda$ trivially. Denote by $\pi_L: H \to L$ the canonical projection and set $W_L := \pi_L(W_0)$, $W := \pi_L^{-1}(W_0) = W_0V$ and 
\[\Gamma_L := \{(\gamma_1, \pi_L(\gamma_2)) \in G \times L \mid (\gamma_1, \gamma_2) \in \Gamma\} < G \times L
\]
Then $(G, L, \Gamma_L)$ is a cut-and-project scheme, $W$ is compact, and
\[
\Lambda(G, H, \Gamma, W) = \Lambda(G, L, \Gamma_L, W_L)
\]
is a uniform model set of connected Lie type. Since $\Lambda$ and $\Lambda(G, H, \Gamma, W)$ are model sets associated with the same cut-and-project scheme with windows $W_0 \subset W$, the model set $\Lambda$ is left-syndetic in this model set of connected Lie type by Remark \ref{Syndetic}.
\end{proof}
One reason for our interest in model sets of almost connected type is the following observation:
\begin{proposition}\label{MeyerFiniteIndex} Let $\Lambda$ be a regular symmetric uniform model set of almost connected type and let $\Delta \subset \Lambda$ be a symmetric Meyer set containing the identity. Then $\Delta^\infty$ is of finite index in $\Lambda^\infty$.
\end{proposition}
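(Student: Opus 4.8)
The plan is to pass everything to the group incarnation of the model set. Since $\pi_G|_\Gamma$ is injective, identify $\Lambda^\infty$ with $\Gamma$, so that $\Lambda$ becomes $\Gamma_W:=\Gamma\cap(G\times W)$ and $\Delta$ becomes a symmetric, left-syndetic subset of $\Gamma_W$ containing the identity, with $\Delta^\infty$ the subgroup it generates. Write $D:=\pi_H(\Delta^\infty)\le H$, let $H_0:=\overline D$ (a closed subgroup of $H$ in which $D$ is dense), and set $\Gamma_0:=\{\gamma\in\Gamma:\pi_H(\gamma)\in H_0\}$, so that $\Delta^\infty\subseteq\Gamma_0\subseteq\Gamma$. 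Then $[\Gamma:\Delta^\infty]=[\Gamma:\Gamma_0]\cdot[\Gamma_0:\Delta^\infty]$, and I would show each factor is finite.

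For $[\Gamma:\Gamma_0]$ I would first check that $H_0$ is open. Since $\Delta$ is relatively dense in $G$, Remark~\ref{Syndetic} gives a finite set $F\subseteq\Gamma$ with $\Gamma_W\subseteq\Delta F$; applying $\pi_H$ and using $\pi_H(\Gamma_W)=\Gamma_H\cap W$ yields $\Gamma_H\cap W\subseteq D\cdot\pi_H(F)$. As $W$ is regular, its interior $W^\circ$ is nonempty with $\overline{W^\circ}=W$, and $\Gamma_H$ is dense in $H$, so $W^\circ\subseteq\overline{\Gamma_H\cap W^\circ}\subseteq H_0\cdot\pi_H(F)$, a finite union of closed translates of $H_0$. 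The Baire category theorem then forces one such translate, hence $H_0$ itself, to have nonempty interior, so the subgroup $H_0$ is open. Since $H$ is almost connected, $H^\circ\subseteq H_0$ and $H_0/H^\circ$ is an open, hence finite-index, subgroup of the compact group $H/H^\circ$, so $[H:H_0]<\infty$; and as $\gamma\Gamma_0\mapsto\pi_H(\gamma)H_0$ is a well-defined injection of $\Gamma/\Gamma_0$ into $H/H_0$, this gives $[\Gamma:\Gamma_0]<\infty$.

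The heart of the argument is the claim $\Gamma_0\subseteq\Delta^\infty F$. Given $\gamma\in\Gamma_0$ we have $\pi_H(\gamma)\in H_0=\overline D$; and since $W$ is $\Gamma$-regular while $e_H\in W\cap\Gamma_H$ (because $e\in\Delta\subseteq\Lambda$), the condition $\partial W\cap\Gamma_H=\emptyset$ gives $e_H\in W^\circ$. Hence $\pi_H(\gamma)W^\circ$ is an open neighbourhood of $\pi_H(\gamma)$, so it meets $D$: choose $\delta\in\Delta^\infty$ with $\pi_H(\delta)=\pi_H(\gamma)w$ for some $w\in W^\circ\subseteq W$. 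Then $\lambda:=\delta^{-1}\gamma\in\Gamma$ satisfies $\pi_H(\lambda)=w^{-1}\in W$ (using that $W$ is symmetric), i.e.\ $\lambda\in\Gamma_W=\Lambda$; now the syndeticity $\Lambda\subseteq\Delta F$ gives $\lambda=\delta'f$ with $\delta'\in\Delta$ and $f\in F$, whence $\gamma=\delta\delta'f\in\Delta^\infty F$. Thus $\Gamma_0$ is covered by the at most $|F|$ left cosets $\Delta^\infty f$ it meets, so $[\Gamma_0:\Delta^\infty]\le|F|<\infty$, and combining with the previous paragraph finishes the proof.

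I do not expect any single computation to be genuinely hard; the point that needs care — and where a naive attempt fails — is that relative density of $\Delta$ in $G$ together with $\Delta\subseteq\Lambda$ does \emph{not} by itself give finite index (witness the analogue $\mathbb{Z}[1/2]\le\mathbb{Q}\le\mathbb{R}$). The two mechanisms that rescue the statement, and which I would emphasize, are: (i) interposing $H_0=\overline{\pi_H(\Delta^\infty)}$, whose openness — hence finite index, via almost-connectedness of $H$ — is forced by the window having nonempty interior; and (ii) the ``internal filling'' step of the third paragraph, which uses $e_H\in W^\circ$ to rewrite each $\gamma\in\Gamma_0$ as $\delta\lambda$ with $\delta\in\Delta^\infty$ and $\lambda\in\Lambda$, so that syndeticity yields a finite union of honest left cosets $\Delta^\infty f$ rather than the a priori unhelpful double cosets $\Delta^\infty f\Delta^\infty$.
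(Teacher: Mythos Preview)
Your proof is correct, and shares with the paper the opening move: both arguments use the syndeticity $\Lambda\subset\Delta F$ and the density of $\Gamma_H$ in $H$ to show, via Baire category, that $H_0=\overline{\pi_H(\Delta^\infty)}$ is an open subgroup of $H$, and then invoke almost-connectedness of $H$ to conclude that $[H:H_0]<\infty$.

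After this common step, however, the two routes diverge. The paper proceeds geometrically: it sets $\Theta:=\Gamma\cap\pi_G^{-1}(\Delta^\infty)$ and applies an auxiliary density result (Lemma~\ref{ModelSet}) to show that $\Theta$ is relatively dense in $G\times H_0$, hence in $G\times H$, hence a uniform lattice; finite index then follows because two nested uniform lattices in the same group are commensurable. Your argument is more direct and purely combinatorial: you interpose the intermediate subgroup $\Gamma_0=\pi_H^{-1}(H_0)\cap\Gamma$, bound $[\Gamma:\Gamma_0]$ by the injection into $H/H_0$, and then bound $[\Gamma_0:\Delta^\infty]$ by the explicit ``filling'' step, which hinges on the pleasant observation that regularity forces $e_H\in W^\circ$. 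This last step is the real novelty of your approach: it replaces the appeal to Lemma~\ref{ModelSet} and the lattice-commensurability principle by an elementary coset-covering argument, yielding the sharp bound $[\Gamma_0:\Delta^\infty]\le|F|$. The paper's approach, on the other hand, establishes as a by-product the stronger statement that $\Theta$ is itself a uniform lattice in $G\times H$.
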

In connection with Kazhdan type properties we mention the following application, which is immediate from Proposition \ref{MeyerFiniteIndex} and Lemma \ref{TUnderFiniteIndex}:
\begin{corollary}\label{MeyerTrick}  Let $\mathcal T$ be a Kazhdan type property with respect to a class of uniformly convex separable Banach spaces which is $L^p$-closed for some $p\in (1, \infty)$. Let $\Lambda$ be a symmetric uniform regular model set of almost connected type and let $\Delta \subset \Lambda$ be a symmetric Meyer set containing the identity. If $(\Lambda, \Lambda^\infty)$ has $\mathcal T$, then $(\Delta, \Delta^\infty)$ has $\mathcal T$.\qed
\end{corollary}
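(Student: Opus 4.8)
The plan is to translate everything into the cut-and-project picture. Identifying $(\Lambda,\Lambda^\infty)$ with $(\Gamma_W,\Gamma)$ via $\pi_G$ as in Remark~\ref{ModelSetConventions}, I write $N:=\Delta^\infty$ for the subgroup of $\Gamma$ generated by the copy of $\Delta$ inside $\Gamma_W=\Gamma\cap(G\times W)$. Then $\pi_H(N)$ is generated by a subset of the \emph{compact} window $W$, while $\pi_G(N)\supseteq\Delta$ is relatively dense and uniformly discrete in $G$. The goal is to prove $[\Gamma:N]<\infty$, and the strategy is: first bound the ``window group'' $L:=\overline{\pi_H(N)}$, then use almost-connectedness to make it of finite index, and finally run a counting argument.

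The first step is that $L$ is an \emph{open} subgroup of $H$. Indeed, a closed subgroup of positive Haar measure contains a neighbourhood of the identity (Steinhaus), so if $L$ were not open it would be Haar-null; then $V:=\overline{\pi_H(\Delta)}\subseteq L$ would be a compact Haar-null subset of $H$, and $\Delta\subseteq\pi_G(\Gamma\cap(G\times V))=\Lambda(G,H,\Gamma,V)$ would be contained in a weak model set with compact null window. By the density theory of (weak) model sets developed in \cite{BHP1,BHP2}, such a set has vanishing upper density and in particular is not relatively dense, contradicting the relative density of $\Delta$. Hence $L$ is open. Now almost-connectedness enters: since $H/H^\circ$ is compact and the open (hence closed) subgroup $L$ contains $H^\circ$, the quotient $L/H^\circ$ is an open closed subgroup of the compact group $H/H^\circ$, so $[H:L]<\infty$. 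Consequently $\Gamma_L:=\Gamma\cap(G\times L)$ satisfies $[\Gamma:\Gamma_L]=[H:L]<\infty$ — the injection $\Gamma/\Gamma_L\hookrightarrow H/L$ is onto because the dense subgroup $\Gamma_H=\pi_H(\Gamma)$ meets every (open) coset of $L$ — and $(G,L,\Gamma_L)$ is again a uniform cut-and-project scheme with $N\leq\Gamma_L$ and $\pi_L(N)$ now \emph{dense} in $L$. Since $[\Gamma:N]<\infty$ if and only if $[\Gamma_L:N]<\infty$, and since (after arranging $e\in W^\circ$ and possibly enlarging $W$) $\Delta$ is still a Meyer set inside the model set $\Lambda_L:=\Lambda(G,L,\Gamma_L,W\cap L)$, the problem reduces to the case where $\pi_H(N)$ is dense in $H$.

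In this remaining case I would argue by counting inside the model set $\Lambda_L=\pi_G(\Gamma_L\cap(G\times(W\cap L)))$, which, being a model set for the lattice $\Gamma_L$, is uniformly discrete and has finite upper density. Decompose $\Gamma_L=\bigsqcup_{i}N\gamma_i$; by injectivity of $\pi_G$ this gives a disjoint decomposition $\Lambda_L=\bigsqcup_i\Delta_i$ with $\Delta_i=\pi_G(N\gamma_i\cap(G\times(W\cap L)))$. Using density of $\pi_H(N)$ one may choose each representative $\gamma_i$ with $\pi_H(\gamma_i)$ lying deep inside the window, and then checks that $\Delta_i$ contains a right-translate of the \emph{fixed} set $\Delta_0:=\pi_G(N\cap(G\times B_0))$, where $B_0\subseteq(W\cap L)^\circ$ is a fixed symmetric compact neighbourhood of $e$. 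Since the finite-index subgroup $N\cap(G\times\langle B_0\rangle)$ of $N$ has relatively dense $\pi_G$-image (a group times a compact set covering $G$ is relatively dense, because $\Delta$ is), a density argument using \cite{BHP1,BHP2} shows that $\Delta_0$ itself has positive lower (Banach) density $\delta_0>0$ in $G$. The translates $\Delta_0\pi_G(\gamma_i)$ are then pairwise disjoint subsets of $\Lambda_L$, each of lower density $\delta_0$, so $[\Gamma_L:N]\cdot\delta_0\leq\overline{\mathrm{dens}}(\Lambda_L)<\infty$, giving $[\Gamma_L:N]<\infty$ and hence $[\Gamma:N]<\infty$.

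The main obstacle is the density-theoretic machinery invoked in the second and third steps: over a base group $G$ without a good F\o lner structure one needs the equidistribution results of \cite{BHP1,BHP2} both for the statement that a weak model set with compact null window is not relatively dense and, more delicately, for propagating the relative density of $\Delta$ to a quantitative positive lower density of the small-window set $\Delta_0$; the bookkeeping that makes the coset count uniform in $i$ is the technical heart. Everything else is routine: the passage to the cut-and-project picture, the elementary index computations, and the observation (used implicitly) that enlarging windows and using Remark~\ref{Syndetic} preserves the Meyer property. The almost-connectedness hypothesis is used exactly once — to upgrade ``$L$ open'' to ``$L$ of finite index'' — which is precisely why it cannot be dispensed with.
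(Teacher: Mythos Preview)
Your reduction to the claim $[\Lambda^\infty:\Delta^\infty]<\infty$ is exactly right --- this is Proposition~\ref{MeyerFiniteIndex}, and the corollary then follows by Lemma~\ref{TUnderFiniteIndex}(i). But both of your main steps rely on density/equidistribution input from \cite{BHP1,BHP2} that is not available (and not needed) in this generality, and you yourself flag this as ``the main obstacle''. The paper bypasses all density theory with two short elementary arguments.

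For the openness of $L=\overline{\tau(\Delta^\infty)}$: instead of arguing by contradiction via null windows, use the left-syndeticity directly. Since $\Delta$ is left-syndetic in $\Lambda$ there is a finite $F$ with $\Lambda\subset F\Delta$, hence $W\cap\Gamma_H=\tau(\Lambda)\subset \tau(F)\,\tau(\Delta)$. Taking closures and using density of $\Gamma_H$ gives $W\subset \tau(F)\,\overline{\tau(\Delta)}$; since $W$ has interior and $\tau(F)$ is finite, Baire forces $\overline{\tau(\Delta)}$ to have interior, so $L$ is open. No measure-theoretic input at all.

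For the finite-index step, the paper does not count densities inside $G$; it shows directly that $\Theta:=\Gamma\cap\pi_G^{-1}(\Delta^\infty)$ is relatively dense in $G\times L$ via the elementary covering Lemma~\ref{ModelSet}: if $\pi_L(\Theta)$ is dense in $L$ and $\pi_G(\Theta\cap(G\times W_L))\supset\Delta$ is relatively dense in $G$, then $\Theta$ is relatively dense in $G\times L$ by a two-line double-coset argument. Almost-connectedness then makes $L$ cocompact in $H$, so $\Theta$ is relatively dense in $G\times H$, hence a uniform sublattice of $\Gamma$, hence of finite index. Your positive-lower-density bookkeeping, and the appeal to equidistribution to propagate relative density of $\Delta$ to a quantitative density of $\Delta_0$, are simply not needed --- and, for non-amenable $G$, would be genuinely hard to justify.
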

For the proof of Proposition \ref{MeyerFiniteIndex} we need:
\begin{lemma}\label{ModelSet} Let $G$, $L$ be lcsc groups, $W_L \subset L$ be compact and let $\Theta \subset G \times L$ be a subset such that $\pi_L(\Theta)$ is dense in $L$. If 
\[\Sigma := \pi_G(\Theta \cap (G \times W_L))\]
is relatively dense in $G$, then $\Theta$ is relatively dense in $G \times L$.
\end{lemma}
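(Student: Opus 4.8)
The plan is to produce a single compact set $K \subseteq G \times L$ with $\Theta K = G \times L$, which by Remark \ref{Syndetic} (relative density is equivalent to the existence of a compact $K$ with $\Theta K = G \times L$) is exactly the assertion that $\Theta$ is relatively dense in the lcsc group $G \times L$. Write $\Theta_W := \Theta \cap (G \times W_L)$, so that $\Sigma = \pi_G(\Theta_W)$. First I would fix, once and for all, a compact $Q \subseteq G$ with $e \in Q$ and $\Sigma Q = G$ (available since $\Sigma$ is relatively dense in $G$); then, using that $W_L$ is compact and $L$ is locally compact, a relatively compact \emph{open} set $V \subseteq L$ containing $\{e\} \cup W_L$; and finally a compact set $C \subseteq L$ with $V^{-1}V \subseteq C$. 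I would take $K := Q \times C$.

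Given an arbitrary target $(g,\ell) \in G \times L$, the idea is to reach it from $\Theta$ by two successive bounded corrections — the first in the $L$-coordinate, using density of $\pi_L(\Theta)$, and the second in the $G$-coordinate, using relative density of $\Sigma$. For the first step: $\ell V^{-1}$ is a nonempty open subset of $L$, so density of $\pi_L(\Theta)$ yields $\vartheta = (\vartheta_1,\vartheta_2) \in \Theta$ with $\vartheta_2 \in \ell V^{-1}$, whence $\vartheta_2^{-1}\ell \in V$; putting $(g',\ell') := \vartheta^{-1}(g,\ell)$ I then have $\ell' \in V$, with $g'$ uncontrolled. For the second step: using $\Sigma Q = G$ I would write $g' = \sigma q$ with $\sigma \in \Sigma$, $q \in Q$, and choose $w \in W_L$ with $(\sigma,w) \in \Theta_W$; then
\[
(\sigma,w)^{-1}(g',\ell') = \bigl(q,\ w^{-1}\ell'\bigr) \in Q \times (V^{-1}V) \subseteq K,
\]
since $w^{-1} \in W_L^{-1} \subseteq V^{-1}$ and $\ell' \in V$. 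Combining the two corrections, $(g,\ell) = \vartheta\,(\sigma,w)\,k$ with $k \in K$ and $\vartheta\,(\sigma,w) \in \Theta$, so $(g,\ell) \in \Theta K$; as $(g,\ell)$ was arbitrary this gives $\Theta K = G \times L$.

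The step I expect to carry the real content is recognising the correct division of labour between the two hypotheses: density of $\pi_L(\Theta)$ is precisely what pushes the second coordinate into the \emph{fixed} bounded set $V$, and only afterwards does relative density of $\Sigma$ — together with compactness of the window $W_L$, which confines the perturbation caused by the second correction to $V^{-1}V$ — dispose of the first coordinate; a second point requiring care is that $Q$, $V$, $C$ (hence $K$) must be chosen before $(g,\ell)$, so as to be uniform in the target. The only algebraic input is that a product of two elements of $\Theta$ again lies in $\Theta$, invoked once at the very end to keep $\vartheta(\sigma,w)$ inside $\Theta$; so the argument uses $\Theta$ through its group structure, and it goes through unchanged — replacing $K$ by $FK$ for a suitable finite set $F$ — when $\Theta$ is merely an approximate subgroup. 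Beyond this, there are no estimates or limiting arguments, only routine bookkeeping with compact sets.
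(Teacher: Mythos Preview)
Your proof is correct and follows essentially the same two-step strategy as the paper: first use density of $\pi_L(\Theta)$ to bring the $L$-coordinate into a fixed compact set, then use relative density of $\Sigma$ (together with compactness of $W_L$) to handle the $G$-coordinate, combining the two corrections via a product in $\Theta$. The paper chooses the compact set $K_1 \times W_L^{-1}K_2$ where $K_2$ is a compact identity neighbourhood, while you take $Q \times C$ with $C \supseteq V^{-1}V$ and $V \supseteq W_L \cup \{e\}$; these are cosmetic variants of the same idea. You are also right to flag that the argument needs $\Theta$ to be closed under multiplication (the paper's proof uses $\theta\theta' \in \Theta$ at the same point), even though the lemma is stated for a ``subset'' --- in the paper's only application $\Theta$ is in fact a subgroup, and your remark that an approximate-subgroup hypothesis would suffice (at the cost of enlarging $K$ to $FK$) is a correct and useful observation.
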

\begin{proof} Since $\Sigma$ is relatively dense, we can choose a compact subset $K_1 \subset G$ such that $G = \Sigma K_1$. Let $K_2$ be a compact identity neighbourhood in $L$ and observe that since $\pi_L(\Theta)$ is dense in $L$ we have $L = \pi_2(\Theta)K_2$. We claim that $G \times L = \Theta(K_1 \times W_L^{-1}K_2)$. 

Indeed, let $(g,l) \in G \times L$; since $L = \pi_2(\Theta)K_2$ we can write $l = \theta_2k_2$ for some $\theta = (\theta_1, \theta_2) \in \Theta$ and $k_2 \in K_2$. Since $G = \Sigma K_1$ we then find $\sigma \in \Sigma $ and $k_1 \in K_1$ such that $\theta_1^{-1}g = \sigma k_1$. By definition of $\Sigma$ we can write $\sigma = \pi_G(\theta')$ for some $\theta' = (\sigma, \theta_2') \in \Theta$ with $\theta_2' \in W_L$. Then
\[
(g, l) = (\theta_1, \theta_2)(\theta_1^{-1}g, k_2) = \theta (\sigma, \theta_2') (k_1, (\theta_2')^{-1}k_2) = \theta\theta'  (k_1, (\theta_2')^{-1}k_2),
\]
and since $(g,l) \in G \times L$ was chosen arbitrarily we have $G \times L = \Theta(K_1 \times W_L^{-1}K_2)$ as claimed.
\end{proof}
\begin{proof}[Proof of of Proposition \ref{MeyerFiniteIndex}] Let $\Lambda = \Lambda(G, H, \Gamma, W)$ with $H$ almost connected and denote by $\tau: \Lambda^\infty = \Gamma_G \to H$ the $*$-map of the cut-and-project scheme $(G, H, \Gamma)$.  Define 
\[
\Theta \;:=\;  \Gamma \cap \pi_G^{-1}(\Delta^\infty) \;<\; \Gamma
\]
so that $\pi_G(\Theta) = \Delta^\infty$ and $\pi_H(\Theta) = \tau(\Delta^\infty)$.
By assumption there exists a finite subset $F \subset \Lambda^\infty$ such that $\Lambda \subset F\Delta$. Thus if we set $F^* := \tau(F)$, then
\[
W \cap \Gamma_H \;=\; \tau(\Lambda) \;\subset \;\tau(F\Delta) \;=\; F^* \tau(\Delta).
\]
Since $\Gamma_H$ is dense in $H$, the intersection $W \cap \Gamma_H$ is dense in $W^o$, hence in $\overline{W^o} = W$. We deduce that
\[
W \;=\; \overline{W \cap \Gamma_H} \;\subset\; F^* \overline{\tau(\Delta)}.
\]
Thus $F^*\overline{\tau(\Delta)}$ has non-empty interior, and since $F^*$ is finite, the Baire category theorem implies that $\overline{\tau(\Delta)}$ has non-empty interior. It follows that the subgroup 
\begin{equation}\label{DenseLinH}
L := \overline{ \pi_H(\Theta)} = \overline{\tau(\Delta^\infty)} < H
\end{equation}
has non-empty interior, hence is an open subgroup of $H$, and $W_L := W\cap L$ is a non-empty compact subset of $L$. By definition $\Theta$ is contained in $G \times L$, and by
\eqref{DenseLinH}, $\pi_L(\Theta) = \pi_H(\Theta)$ is dense in $L$. Moreover the set
\[
\Sigma :=  \pi_G(\Theta \cap (G \times W_L))
\]
is relatively dense in $G$, since it contains $\Delta$. Thus Lemma \ref{ModelSet} applies, and we deduce that $\Theta$ is relatively dense in $G \times L$. Now $H$ is almost connected and $L$ being open contains the identity component of $H$, hence $L$ is left-syndetic in $H$. It follows that $\Theta$ is also a relatively dense in $G\times H$.
Since $\Theta < \Gamma$ it is also uniformly discrete, hence a uniform lattice. Since $\Theta$ and $\Gamma$ are both uniform lattices, we deduce that $\Theta$ has finite index in $\Gamma$, and projecting to $G$ we see that $\Delta^\infty = \pi_G(\Theta)$ has finite index in $\Lambda^\infty = \pi_G(\Gamma)$.
\end{proof}
Finally we turn to the question how far an arbitrary Meyer set is from being of almost connected type, respectively connected Lie type. To state our result, we introduce the following terminology:

\begin{definition} Let $\Gamma$ be a group and $\Sigma \subset \Gamma$ be a symmetric subset. We say that an element $a \in \Gamma$ \emph{quasi-commensurates} $\Sigma$ if there exists a finite subset $F_a \subset \Gamma$ such that 
\[
a\Sigma \subset \Sigma F_a \quad \text{and} \quad \Sigma a \subset F_a \Sigma.
\] 
We say that a subset $A \subset \Gamma$ quasi-commensurates the set $\Sigma$ if every element of $A$ quasi-commensurates $\Sigma$. If $\Lambda \subset \Gamma$ is a subset we denote by ${\rm qComm}_{\Lambda}(\Sigma)$ the set of all elements of $\Lambda$ which commensurate $\Sigma$.
\end{definition}

B definition,  ${\rm qComm}_{\Lambda}(\Sigma)$ is the largest subset of $\Lambda$ which quasi-commensurates $\Sigma$. Note that we can enlarge approximate groups by finite subsets of their quasi-commensurator:
\begin{proposition}\label{AlmostConnectedvsConnectedLie} Let $G$ be a group and $\Sigma \subset G$ be an approximate subgroup. If $F \subset {\rm qComm}_G(\Sigma)$ is finite and contains the identity, then $F\Sigma\cup \Sigma F^{-1}$ is an approximate subgroup of $G$.
\end{proposition}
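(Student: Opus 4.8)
The plan is to verify the three axioms of an approximate subgroup for $\Xi := F\Sigma\cup\Sigma F^{-1}$ directly. Axiom (AG3) holds by fiat (take $\Xi^\infty$ to be the subgroup generated by $\Xi$), and (AG1) is immediate: since $\Sigma=\Sigma^{-1}$ we have $(F\Sigma)^{-1}=\Sigma F^{-1}$ and $(\Sigma F^{-1})^{-1}=F\Sigma$, so $\Xi^{-1}=\Xi$, and $e=e\cdot e\in F\Sigma$ because $e\in F\cap\Sigma$. I also record two inclusions to be used throughout: $\Sigma=e\Sigma\subseteq F\Sigma\subseteq\Xi$, and $F\subseteq\Xi$ (hence also $F^{-1}\subseteq\Xi$). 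So the whole content is the covering condition (AG2).

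The first genuine step is the observation that ${\rm qComm}_G(\Sigma)$ is closed under inverses (indeed it is a subgroup of $G$); in particular $F^{-1}\subseteq{\rm qComm}_G(\Sigma)$. To see this, let $a\in{\rm qComm}_G(\Sigma)$ and pick a finite $F_a$ with $a\Sigma\subseteq\Sigma F_a$ and $\Sigma a\subseteq F_a\Sigma$; taking inverses of both inclusions and using $\Sigma^{-1}=\Sigma$ gives $a^{-1}\Sigma\subseteq\Sigma F_a^{-1}$ and $\Sigma a^{-1}\subseteq F_a^{-1}\Sigma$, so $a^{-1}\in{\rm qComm}_G(\Sigma)$. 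Consequently, setting $P:=\{e\}\cup\bigcup_{a\in F}(F_a\cup F_a^{-1})$, which is finite, I obtain the four inclusions $F\Sigma\subseteq\Sigma P$, $\Sigma F\subseteq P\Sigma$, $F^{-1}\Sigma\subseteq\Sigma P$ and $\Sigma F^{-1}\subseteq P\Sigma$. These I combine with the approximate-subgroup inclusion $\Sigma^2\subseteq\Sigma F_\Sigma$, where $F_\Sigma$ is finite.

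Next I would expand $\Xi^2 = F\Sigma F\Sigma\ \cup\ F\Sigma^2F^{-1}\ \cup\ \Sigma F^{-1}F\Sigma\ \cup\ \Sigma F^{-1}\Sigma F^{-1}$ and bound each of the four pieces, the guiding rule being to always transport a copy of $F$ or $F^{-1}$ (which quasi-commensurate $\Sigma$) across a copy of $\Sigma$, and never the auxiliary set $P$. This gives $F\Sigma F\Sigma=F\Sigma(F\Sigma)\subseteq F\Sigma^2P\subseteq F\Sigma F_\Sigma P$; next $F\Sigma^2F^{-1}\subseteq F\Sigma F_\Sigma F^{-1}$; next $\Sigma F^{-1}F\Sigma\subseteq\Sigma F^{-1}\Sigma P\subseteq\Sigma^2P^2\subseteq\Sigma F_\Sigma P^2$; and finally $\Sigma F^{-1}\Sigma F^{-1}=\Sigma(F^{-1}\Sigma)F^{-1}\subseteq\Sigma^2PF^{-1}\subseteq\Sigma F_\Sigma PF^{-1}$. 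Each right-hand side has the form $F\Sigma\cdot(\text{finite})$ or $\Sigma\cdot(\text{finite})$, hence is contained in $\Xi\cdot(\text{finite})$ since $\Sigma\subseteq F\Sigma\subseteq\Xi$. Taking $F_\Xi$ to be the union of the four finite sets so produced — and, as usual, replacing it by $F_\Xi\cap\Xi^3$ to ensure $F_\Xi\subseteq\Xi^\infty$ — yields $\Xi^2\subseteq\Xi F_\Xi$, which is (AG2), completing the verification.

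I expect the only real difficulty to be this last bit of bookkeeping. A careless expansion of $\Xi^2$ leaves terms with an auxiliary finite set trapped between two copies of $\Sigma$ — for instance, transporting the \emph{first} $F$ in $F\Sigma F\Sigma$ rightward gives $\Sigma PF\Sigma\subseteq\Sigma P\Sigma P$ — and such a term cannot be absorbed into $\Xi\cdot(\text{finite})$, because a generic element of $P$ need not quasi-commensurate $\Sigma$. The point is that in each of the four products one can choose which copy of $F^{\pm1}$ to transport so that, after the move, only $\Sigma^2$ (times finite sets on the outside) remains, at which stage $\Sigma^2\subseteq\Sigma F_\Sigma$ finishes the job.
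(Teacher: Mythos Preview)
Your proof is correct and follows the same direct-verification strategy as the paper: check symmetry and the identity, then expand $\Xi^2$ and use the quasi-commensuration of $\Sigma$ by elements of $F$ (and of $F^{-1}$, via your inverse argument) to push the finite factors to the right. The paper's proof is much terser --- it asserts $\Sigma_F^2\subset\Sigma^2F_0^3\subset\Sigma F_\Sigma F_0^3$ in one line --- whereas you treat the four cross-terms separately and explain which copy of $F^{\pm1}$ to transport so as to avoid trapping an auxiliary finite set between two $\Sigma$'s; this extra care is warranted and your remark about that pitfall is exactly the point.
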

\begin{proof} By construction, $\Sigma_F$ is symmetric and contains the identity. Let $F_\Sigma$ be finite such that $\Sigma^2 \subset \Sigma F_\Sigma$. Given $x \in F$ chose $F_x$ finite such that $x\Sigma \subset \Sigma F_x$ and set $F_0 := \bigcup F_x$. Then
\[
\Sigma_F^2 \subset \Sigma\Sigma F_0^3 \subset \Sigma F_\Sigma F_0^3,
\]
which shows that $\Sigma_F$ is an approximate subgroup.
\end{proof}
\begin{definition} If $G$ is a group, $\Sigma \subset G$ is an approximate subgroup and $F \subset {\rm qComm}_G(\Sigma)$ is finite, then the approximate group $\Sigma_F := F\Sigma\cup \Sigma F^{-1}$ is called the \emph{enlargement} of $\Sigma$ by $F$.
\end{definition}
While it is not true in general that every Meyer set is contained in a model set of connected Lie type, we can show that every Meyer set is contained in the enlargement of a suitable model set of connected Lie type.
\begin{theorem}\label{ThmMeyerGeneral} Let $G$ be a lcsc group and let $\Lambda$ be a model set in $G$. Then there exists a model set $\Sigma$ of connected Lie type in $G$ and a finite subset $F \subset {\rm qComm}_{\Lambda^\infty}(\Sigma)$ such that
\[
\Lambda \subset F\Sigma \subset \Sigma_F.
\]
In particular, every Meyer set is contained in a finite union of model sets of connected Lie type.
\end{theorem}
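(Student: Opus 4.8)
The plan is to reduce, in two steps, to a cut-and-project scheme whose internal group is a connected Lie group, at the cost of replacing $\Lambda$ by a finite union of left-translates of a model set. Write $\Lambda = \Lambda(G,H,\Gamma,W)$; by the conventions recalled at the start of this appendix we may assume that $\Lambda$ is symmetric, regular and contains the identity -- so that $\Lambda^\infty = \Gamma_G$ by the Remark following Theorem~\ref{ModelSetBackground} -- and that $W$ generates $H$. First I replace $H$ by an \emph{open almost connected} subgroup $H_1 < H$, producing a symmetric regular model set $\Sigma_1 = \Lambda(G,H_1,\Gamma_1,W_1)$ of almost connected type together with a finite $F \subset \Gamma_G$ satisfying $\Lambda \subset F\Sigma_1$ and $F \subset {\rm qComm}_{\Gamma_G}(\Sigma_1)$. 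Next I invoke the Proposition above on model sets of almost connected type to obtain a model set $\Sigma$ of connected Lie type in $G$ in which $\Sigma_1$ is left-syndetic, i.e.\ $\Sigma_1 \subset \Sigma \subset \Sigma_1 F_0$ for a finite $F_0 \subset \Sigma^{-1}\Sigma$. Since the group ${\rm qComm}_{\Gamma_G}(-)$ is monotone under commensurability -- if $\Sigma_1 \subset \Sigma \subset \Sigma_1 F_0$ with $\Sigma$ symmetric and $F_0$ finite, then applying a relation $a\Sigma_1 \subset \Sigma_1 F_a$ to both $a$ and $a^{-1}$ and inverting gives ${\rm qComm}_{\Gamma_G}(\Sigma_1) \subset {\rm qComm}_{\Gamma_G}(\Sigma)$ -- we get $F \subset {\rm qComm}_{\Lambda^\infty}(\Sigma)$, hence $\Lambda \subset F\Sigma_1 \subset F\Sigma \subset \Sigma_F$ with $\Sigma_F$ an approximate subgroup by Proposition~\ref{AlmostConnectedvsConnectedLie}.

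For the first step, $H/H^\circ$ is totally disconnected, so by van Dantzig's theorem it has a compact open subgroup whose preimage $H_1 < H$ is open with $H_1/H_1^\circ$ compact; thus $H_1$ is almost connected. Set $\Gamma_1 := \Gamma \cap (G \times H_1)$. Since $H_1$ is open, the orbit $(G \times H_1).\Gamma$ is clopen in the compact space $(G\times H)/\Gamma$, hence compact, so $\Gamma_1$ is a uniform lattice in $G\times H_1$; it projects injectively to $G$ (inherited), and densely to $H_1$ because $\Gamma_H$ is dense in $H$ and $H_1$ is open. So $(G,H_1,\Gamma_1)$ is a uniform cut-and-project scheme whose $*$-map restricts $\tau$. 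As $W$ is compact it meets only finitely many cosets $f_1H_1,\dots,f_kH_1$ of the open subgroup $H_1$; since each coset is open and $\Gamma_H$ is dense, we may take $f_i \in \Gamma_H$, and we pick $\phi_i \in \Gamma_G$ with $\tau(\phi_i) = f_i$. Now choose a symmetric, regular, compact window $W_1 \subset H_1$ with $e \in W_1$ and $f_i^{-1}W \cap H_1 \subset W_1$ for all $i$ (possible, as these are finitely many compact subsets of $H_1$), and put $\Sigma_1 := \Lambda(G,H_1,\Gamma_1,W_1)$. If $\gamma \in \Lambda$, i.e.\ $\tau(\gamma) \in W$, then $\tau(\gamma) \in f_iH_1$ for some $i$, whence $\tau(\phi_i^{-1}\gamma) = f_i^{-1}\tau(\gamma) \in f_i^{-1}W \cap H_1 \subset W_1$, so $\phi_i^{-1}\gamma \in \Sigma_1$; therefore $\Lambda \subset \bigcup_i \phi_i\Sigma_1 = F\Sigma_1$ with $F := \{\phi_1,\dots,\phi_k\}$.

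It remains to check that each $\phi_i$ quasi-commensurates $\Sigma_1$. Identify $\Gamma$ with $\Gamma_G$ via $\pi_G$. Every element of $\phi_i\Sigma_1$ has the form $\pi_G(\delta)$ with $\delta \in \Gamma$ and $\pi_H(\delta) \in f_iW_1$. Because $W_1$ has dense interior and $\Gamma_H$ is dense and symmetric in $H$, the right-translates $\{W_1\psi : \psi \in \Gamma_H\}$ cover $H$, so the compact set $f_iW_1$ lies in a finite union $\bigcup_j W_1\psi_j$ with $\psi_j \in \Gamma_H$; lifting the $\psi_j$ to $\Gamma$ produces a finite $F_{\phi_i} \subset \Gamma_G$ with $\{\pi_G(\delta) : \delta \in \Gamma,\ \pi_H(\delta) \in f_iW_1\} \subset \Sigma_1 F_{\phi_i}$, hence $\phi_i\Sigma_1 \subset \Sigma_1 F_{\phi_i}$. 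Covering the compact set $W_1f_i$ by finitely many left-translates $\psi_jW_1$ gives symmetrically $\Sigma_1\phi_i \subset F'_{\phi_i}\Sigma_1$. Thus $F \subset {\rm qComm}_{\Gamma_G}(\Sigma_1) = {\rm qComm}_{\Lambda^\infty}(\Sigma_1)$, and the monotonicity noted in the first paragraph upgrades this to $F \subset {\rm qComm}_{\Lambda^\infty}(\Sigma)$, which together with $\Lambda \subset F\Sigma_1 \subset F\Sigma$ proves the displayed inclusions. Finally, an arbitrary Meyer set $\Delta$ is left-syndetic in, hence contained in, a model set, which may be enlarged to satisfy the standing assumptions above; then $\Delta \subset \Lambda \subset F\Sigma = \bigcup_{f \in F} f\Sigma$, a finite union of left-translates of the model set $\Sigma$ of connected Lie type.

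I expect the quasi-commensuration verification of the third paragraph to be the main obstacle: it is the only point where the topology genuinely intervenes, via the interplay of compactness of the windows with the density of $\Gamma_H$ in $H$, and it requires care because $\tau$ need not be injective and because the enveloping group $\Sigma^\infty$ of the final model set may be strictly smaller than $\Lambda^\infty = \Gamma_G$, so the $\phi_i$ must be exhibited as quasi-commensurating $\Sigma$ \emph{inside} the larger group $\Lambda^\infty$ and not merely inside $\Sigma^\infty$. A secondary, purely bookkeeping point is to arrange $W_1$ to be regular while still containing the prescribed compact subsets of $H_1$, and to record the elementary monotonicity of ${\rm qComm}$ under commensurability used in the reduction from almost connected to connected Lie type.
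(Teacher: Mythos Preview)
Your proof is correct and follows essentially the same approach as the paper: pass to an open almost connected subgroup of $H$ via van Dantzig, cover the window by finitely many $\Gamma_H$-translates to produce $F$, enlarge the window inside the subgroup to get $\Sigma_1$, and verify quasi-commensuration by a second covering argument. The only organizational difference is that you split the reduction into two stages (almost connected, then connected Lie via the earlier Proposition, glued together by your monotonicity observation for ${\rm qComm}$), whereas the paper carries out both reductions in a single pass by choosing the window $W_L$ to be $V$-saturated from the outset and checking quasi-commensuration directly for the connected-Lie model set; this spares the paper your monotonicity step but is otherwise the same argument. Your worries in the final paragraph are unfounded: the covering argument for quasi-commensuration only uses that $\Gamma_H$ is dense and $W_1$ has nonempty interior, injectivity of $\tau$ plays no role, and the definition of ${\rm qComm}_{\Lambda^\infty}(\Sigma)$ allows the finite sets to lie in $\Lambda^\infty$ rather than $\Sigma^\infty$, so there is nothing delicate there.
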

\begin{proof} Let $(G, H, \Gamma)$ be a cut-and-project scheme with $*$-map $\tau: \Gamma_G \to H$ and let $\Lambda = \Lambda(G, H, \Gamma, W) = \tau^{-1}(W)$. Denote by $H_o$ the identity component of $H$ and by $\pi: H \to H/H_o$ be the canonical projection. Let $U<H/H_o$ be an arbitrary compact-open subgroup and set $L := \pi^{-1}(U)$ so that $L$ is an almost connected open subgroup of $H$. Since $L$ is almost connected, \cite[p. 175]{MZ55} implies that we can choose a compact normal subgroup $V$ of $L$ such that $\{e\} \times L$ intersects $\Gamma$ trivially and such that $M := L/V$ is a connected Lie group. We denote by $\pi_M: L \to M$ the canonical projection.

Since $\Gamma_H$ is dense in $H$, the image $\pi(\Gamma_H) = \pi(\tau(\Gamma_G))$ is dense in $H/H_o$ and thus $\pi(\tau(\Gamma_G))U = H/H_o$. Since $\pi(W)$ is compact we find a finite subset $F \subset \Gamma_G$ such that $\pi(W) \subset \pi(\tau(F))U$, and hence
\[
W \subset \tau(F) L.
\]
We now choose a compact subset $W_L \subset L$ such that $W_L \supset F^{-1}W \cap L \subset L$ and such that $W_M := \pi_M(W_L)$ satisfies $\pi_M^{-1}(W_M) = W_L$. Then
\[
W = \bigcup_{f \in F} (W \cap \tau(f)L) = \bigcup_{f \in F} \tau(f) (\tau(f)^{-1}W \cap L) \subset \bigcup_{f \in F} \tau(f)W_L = \tau(F)W_L,
\]
and hence if we set $\Sigma := \tau^{-1}(W_L) \subset \Gamma_G$, then
\[
\Lambda = \tau^{-1}(W) \subset \tau^{-1}(\tau(F)W_L) = F \tau^{-1}(W_L) = F\Sigma.
\]
Now define $\Gamma_L :=  \Gamma \cap (G \times L) < G \times L$ and $\Gamma_M := ({\rm id} \times \pi_M)(\Gamma_L) < G \times M$. Since $G\times L$ is open in $G\times H$, the group $\Gamma_L$ is a uniform lattice in $G \times L$. Since $\Gamma_L < \Gamma$ its projection to $G$ is injectice, and since $L$ is open in $H$, the projection of $\Gamma_L$ to $L$ is dense in $L$, hence $(G, L, \Gamma_L)$ is a cut-and-project scheme. From this it follows as in the proof of Proposition \ref{AlmostConnectedvsConnectedLie} that also $(G, M, \Gamma_M)$ is a cut-and-project scheme and since $W_L = \pi_M^{-1}(W_M)$ we have
\[
\Sigma = \pi_G((G \times W_L) \cap \Gamma) = \pi_G((G \times W_L) \cap \Gamma_L) =  \pi_G((G \times W_M) \cap \Gamma_M) = \Lambda(G, M, \Gamma_M, W_M).
\]
This shows that $\Sigma$ is a model set of connected type such that $\Lambda \subset F\Sigma$. Moreover, if $x \in F$, then the compact set $\tau(x)W_L$ can be covered by finitely many $\Gamma_H$-translates of $W_L$ (since $\Gamma_H$ is dense in $H$ and $W_L$ has open interior in $H$), hence we find $F_x \subset \Gamma_G = \Lambda^\infty$ such that
\[
\tau(x)W_L \subset W_L \tau(F_x) \quad \text{and} \quad W_L\tau(x) \subset \tau(F_x) W_L.
\]
Consequently,
\[
x\Sigma \subset \tau^{-1}(\tau(x\Sigma)) = \tau^{-1}(\Gamma_H \cap \tau(x)W_L) \subset \tau^{-1}(\Gamma_H \cap W_L \tau(F_x)) = \Sigma F_x, 
\]
and similarly $\Sigma x \subset F_x \Sigma$. This shows that $F \subset {\rm qComm}_{\Lambda^\infty}(\Sigma)$ and finishes the proof.
\end{proof}
Let us say that a lcsc group $G$ can be \emph{coupled} to a lcsc group $H$ if there exists a uniform lattice $\Gamma < G \times H$ which projects injectively to $G$ and densely to $H$. Then we have the following consequence of Theorem \ref{ThmMeyerGeneral}:
\begin{corollary} Let $G$ be a lcsc group which cannot be coupled to any non-compact connected Lie group. Then every Meyer set in $G$ is contained in a finite union of lattices.\qed
\end{corollary}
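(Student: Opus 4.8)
The plan is to read this corollary off from Theorem~\ref{ThmMeyerGeneral} together with the hypothesis on $G$. Let $\Delta$ be a Meyer set in $G$. By definition $\Delta$ is a left-syndetic subset of some model set $\Lambda\subset G$, so in particular $\Delta\subset\Lambda$. First I would apply Theorem~\ref{ThmMeyerGeneral} to $\Lambda$ to obtain a model set $\Sigma$ of connected Lie type in $G$ and a finite subset $F\subset{\rm qComm}_{\Lambda^\infty}(\Sigma)$ with $\Lambda\subset F\Sigma$. Write $\Sigma=\Lambda(G,M,\Gamma_M,W_M)$, where $(G,M,\Gamma_M)$ is a uniform cut-and-project scheme and $M$ is a connected Lie group (uniformity of the scheme is clear from the construction in the proof of Theorem~\ref{ThmMeyerGeneral}).

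The key observation is that the scheme $(G,M,\Gamma_M)$ is nothing but a coupling of $G$ to $M$: by definition $\Gamma_M$ is a uniform lattice in $G\times M$ which projects injectively to $G$ and densely to $M$. Since $M$ is a connected Lie group and, by hypothesis, $G$ admits no coupling to a non-compact connected Lie group, it follows that $M$ must be compact.

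Once $M$ is known to be compact, the set $\Sigma$ is contained in a single uniform lattice of $G$. Put $\Gamma_0:=\pi_G(\Gamma_M)$; then trivially $\Sigma=\pi_G(\Gamma_M\cap(G\times W_M))\subset\Gamma_0$. Because $M$ is compact, $\pi_G$ is proper in restriction to the uniform lattice $\Gamma_M$, so $\Gamma_0$ is discrete in $G$, and $\pi_G$ is injective on $\Gamma_M$ (this is part of the cut-and-project data), whence $\Gamma_0\cong\Gamma_M$; moreover $(G\times M)/\Gamma_M$ surjects continuously onto $G/\Gamma_0$ via $(g,m)\Gamma_M\mapsto g\Gamma_0$, so $\Gamma_0$ is cocompact. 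Hence $\Gamma_0$ is a uniform lattice in $G$. Combining everything, $\Delta\subset\Lambda\subset F\Sigma\subset F\Gamma_0=\bigcup_{f\in F}f\Gamma_0$, which exhibits $\Delta$ inside a finite union of left-translates of the uniform lattice $\Gamma_0$, as asserted.

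I do not expect a serious obstacle here: once Theorem~\ref{ThmMeyerGeneral} is available the argument is essentially bookkeeping. The two points requiring (routine) verification are that a uniform cut-and-project scheme is literally a coupling in the sense defined above — immediate from the definitions — and the standard fact that the image of a uniform lattice under a projection killing a compact direct factor is again a uniform lattice. One should also note that ``finite union of lattices'' in the statement is to be read as ``finite union of left-translates of a uniform lattice''; if one arranges $e\in F$ (which is harmless), one of these translates is $\Gamma_0$ itself.
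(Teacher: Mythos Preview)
Your argument is correct and is exactly the intended one: the paper records this corollary with a bare \qed, treating it as immediate from Theorem~\ref{ThmMeyerGeneral}, and your write-up supplies precisely the routine verifications (that a uniform cut-and-project scheme is a coupling, and that projecting a uniform lattice along a compact factor yields a uniform lattice) needed to unpack that. Your reading of ``finite union of lattices'' as ``finite union of left-translates of a uniform lattice'' is also the right one.
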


\section{Measurable vs. continuous quasi-cocycles}\label{AppendixCocycle}
Let $G$ be a lcsc group, $E$ be a Banach space and $\pi: G \to O(E)$ be a Borel homomorphism. Recall that a locally bounded Borel map $b: G \to E$ is called a \emph{Borel quasi-cocycle} if 
\begin{equation}
\label{def_Db}
D(b) := \sup_{g_1, g_2 \in G} \|b(g_1g_2)-b(g_1) - \pi(g_1)b(g_2)\| < \infty,
\end{equation}
and a \emph{continuous quasi-cocycle} if it is moreover continuous with respect to the norm topology on $E$. Burger and Monod \cite{BM1} define a group $G$ to have Property (TT) if every continuous quasi-cocycle on an $L^2$-space is bounded. On the contrary, Ozawa \cite{Oz11} defines Property (TT) by making the a priori stronger demand that every Borel quasi-cocycle on an $L^2$-space be bounded.

In our definition of Property (FF$L^p$) we follow Ozawa's convention to work with Borel quasi-cocycles. However, let us point out that the two definitions are actually equivalent in view of the following result which we record here for lack of an explicit reference:
\begin{theorem}\label{TTvsTT} If $b: G \to E$ is a Borel quasi-cocycle with respect to $\pi: G \to O(E)$, then there exists a continuous quasi-cocycle $b': G \to E$ which is at uniformly bounded distance from $b$. In particular, $G$ has Property (TT) in the sense of Burger and Monod iff it has Property (TT) in the sense of Ozawa.
\end{theorem}
\begin{remark} The full strength of the assumptions above is actually not needed. We only need $\pi$ to be a uniformly bounded representations of $G$ over $E$, which is Borel and hence actually strongly continuous. Concerning $b$ we only need that it is weakly measurable, locally bounded and satisfies \eqref{def_Db}.
\end{remark}
\begin{proof}[Proof of Theorem \ref{TTvsTT}] We fix a left-Haar measure $m_G$ on $G$ and $\rho \in C_c(G)$ which satisfies $\rho \geq 0$ and $\int_G\rho\, dm_G = 1$. We then define a function $b_\rho: G \to E$ by
\[
b_\rho(g) := \int_G \rho(h) b(gh)  \, dm_G(h) = \int_G \rho(g^{-1}h) b(h) \, dm_G(h),
\]
where the integral is understood in the Gelfand--Pettis sense. We also set $K_\rho := {\rm supp}(\rho)$.

Since $\pi$ is uniformly bounded and $b$ is locally bounded the distance
\begin{eqnarray*}
\|b(g) - b_\rho(g)\| &=& \|b(g) - \int_G  \rho(h)b(gh)\, dm_G(h) \|\\
&=& \|\int_G \rho(h) (b(g)-b(gh) + \pi(g)b(h)) dm_G(h) - \pi(g) \int_G \rho(h) b(h)\, dm_H(h)\|\\
&\leq&D(b) + \|\pi(g)\|_{\rm op} \cdot \|\int_{K_\rho}  \rho(h) b(h) dm_H(h)\|
\end{eqnarray*}
is bounded uniformly in $g$, hence it remains to see that $b_\rho$ is a continuous quasi-cocycle. As for the quasi-cocycle property, we observe that for all $g, h \in G$ we have
\begin{eqnarray*}
\|b_\rho(gh) - b_\rho(g) - \pi(g)b_\rho(h)\| &\leq& \|b_\rho(gh) - b(g) - \pi(g)b_\rho(h)\| + \|b(g) - b_\rho(g)\| \\
&\leq& \int_G \|b(ghk) - b(g) - \pi(g)b(hk)\| \, \rho(k) dm_G(k) +  \|b(g) - b_\rho(g)\|.
\end{eqnarray*}
Now the first summand is bounded by $D(b)$ in view of \eqref{def_Db}, and the second term is uniformly bounded in view of the previous argument. To see continuity of $b_\rho$ we fix $\varepsilon>0$. Since $\rho$ is uniformly continuous on compacta we can find a symmetric identity neighbourhood $V$ in $g$ such that $|\rho(x)-\rho(s^{-1}x)| < \varepsilon$ for all $s \in V$. Now for $s \in V$ and $g \in G$ we have
\[
\|b_\rho(gs) - b_\rho(g)\| = \int_{G} (\rho(s^{-1}g^{-1}h) - \rho(g^{-1}h))b(h) dm_G(h).
\]
The first factor is bounded in absolute value by $\varepsilon$. Moreover, the integral vanishes unless $\{s^{-1}g^{-1}h, g^{-1}h\} \cap K_\rho \neq \emptyset$, which implies $h \in K_g' := g(K_\rho \cup VK_\rho)$. It follows that
\[
\|b_\rho(gs) - b_\rho(g)\| \leq  \sup\{\|b(h)\| \mid h \in K_g'\} \cdot \varepsilon,
\]
which yields the desired continuity.
\end{proof}

\newcommand{\etalchar}[1]{$^{#1}$}

\end{document}